\documentclass[12pt]{article}
\setlength{\oddsidemargin}{0in}
\setlength{\evensidemargin}{0in}
\setlength{\topmargin}{-0.75in}
\setlength{\textheight}{9.0in}
\setlength{\textwidth}{6.5in}
\usepackage{latexsym}
\usepackage{amsmath,amsthm,amssymb}
\usepackage{graphicx}
\theoremstyle{remark}
\newtheorem*{remark}{Remark}
\newtheorem*{convention*}{Convention}
\theoremstyle{plain}
\newtheorem{theorem}{Theorem}[section]
\newtheorem{lemma}[theorem]{Lemma}
\newtheorem{proposition}[theorem]{Proposition}

\theoremstyle{definition}

%

\newcommand{\bR}{{\mathbf R}}

%

\newcommand{\com}{\circ}
\DeclareMathOperator{\cl}{cl}

%
\begin{document}
\title{A Nutrient-Prey-Predator Model:  Stability and Bifurcations}
\author{Mary Ballyk\thanks{Keywords: Chemostat, Hopf bifurcation, coexistence equilibrium}
 \and Ibrahim Jawarneh\thanks{Mathematics Subject Classification: Primary 37G10;  Secondary 34C23 92D25 34A34 } 
\\ \\
Department of Mathematical Sciences
\\
New Mexico State University
\\
Las Cruces, NM 88003, USA
\and Ross Staffeldt}
\maketitle
\begin{abstract}
In this paper we consider a model of a nutrient-prey-predator
system in a chemostat with general functional responses, using the 
input concentration of nutrient as the bifurcation parameter. 
We study the changes  in the existence of isolated equilibria and in their stability, 
as well as the global dynamics, as the nutrient concentration varies.
The bifurcations of the system are analytically verified  and we identify conditions under which 
an equilibrium undergoes a Hopf bifurcation and a limit cycle appears. 
Numerical simulations for specific functional responses illustrate the general results.
\end{abstract}
\section{Introduction}
\label{Introduction}
 We consider a mathematical model of two-species predator-prey interaction in the chemostat under nutrient limitation.
 With the exception of one nutrient, all nutrients that the prey species
 requires are supplied to the growth vessel 
from the feed vessel in ample supply. The predator species grows exclusively on the prey.  
 With $N$ the concentration of the limiting nutrient,  
$P$ the concentration of prey (say, phytoplankton),
and $Z$ the concentration of predator (say, zooplankton), we consider the following model:
\begin{align} 
  dN/dt &= (\mu - N) D - P f_1(N)  \notag
\\
 dP/dt  &= \gamma_1 P f_1(N) - D_1 P - Z f_2(P)  \label{NPZsys}
\\ 
 dZ/dt  &= \gamma_2 Z f_2(P) - D_2 Z  \notag
\end{align}
for initial conditions $N(0) = N_0 >0$, $P(0) = P_0  \geq 0$, and $Z(0) = Z_0 \geq 0$. 

The concentration of the growth-limiting nutrient in the feed vessel is
denoted $\mu$, and will be the bifurcation parameter in our analysis.
$D$ is the input rate from the feed vessel to the growth vessel as well as the washout rate from the growth vessel to the receptacle, 
so that constant volume is maintained. 
The parameters $D_{1}=D+\epsilon_1$ and $D_{2}=D+\epsilon_2$ are the removal rates of $P$ and $Z$, respectively, from the growth vessel, 
incorporating the washout rate $D$ and the intrinsic death rates $\epsilon_i$ of $P$ and $Z$. 
Our analysis does not necessarily require that $\epsilon_1$ and $\epsilon_2$ are positive; however, $D_1$ and $D_2$ should be positive.
The yield coefficient $\gamma_1$ gives the amount of prey biomass produced per unit of nutrient consumed, 
while $\gamma_2$ gives the amount of predator biomass produced per unit of prey biomass consumed.

The function $f_1(N)$ represents the per capita consumption rate of nutrient by the prey populations
 as a function of the concentration of available nutrient; 
similarly, the function $f_2(P)$ represents the per capita consumption rate of the prey by the predator
 as a function of available prey. These functions are assumed to satisfy 
$f_i(0) = 0$, $i=1$, 2, $f_1'(N)>0$ for all $N\geq 0$, and $f_2'(P)>0$ for all $P\geq 0$. 
We further assume that $f_1(N)$ and $f_2(P)$ are bounded. 
To avoid the case of washout due to an inadequate resource, we assume that 
$\lim_{N\to\infty} f_{1}(N) > D_1/\gamma_1$, and 
to avoid the case of an inadequate prey, we assume that $\lim_{P\to\infty} f_{2}(P) > D_2/\gamma_2$.   
Define $\lambda_P(D_1)$ and $\lambda_Z(D_2)$ to be the unique numbers satisfying 
\begin{equation}
\label{breakeven}
    f_1\bigl(\lambda_P(D_1)\bigr)=D_1/\gamma_1 \mbox{  and   }
    f_2\bigl(\lambda_Z(D_2)\bigr)=D_2/\gamma_2.
\end{equation}
The number $\lambda_P(D_1)$ is the break-even concentration of nutrient, at which the growth and removal of phytoplankton balance; 
$\lambda_Z(D_2)$ is similarly interpreted.  
The number $D$ plays a central role in our investigation, so we assume that $\lambda_P(D)$ and $\lambda_Z(D)$ are also 
defined.  From the perspective of $D$, due to the boundedness assumptions on $f_1$ and $f_2$, $D_1$ and $D_2$ are
perturbations of $D$. 
\begin{lemma} \label{lambda_diff}
  From a functional point of view, $\lambda_P$ and $\lambda_Z$ are right
  inverses of $\gamma_1f_1$ and $\gamma_2f_2$, respectively.  Accordingly, on their respective domains
$\lambda_P$ and $\lambda_Z$ are as differentiable as $f_1$ and $f_2$.  We note for later use
\begin{equation} \label{lambdaderivs}
  \lambda_P'(D_1) = \bigl( \gamma_1\cdot f_1'(\lambda_P(D_1))\bigr)^{-1}
 \quad \text{and} \quad
 \lambda_Z'(D_2) = \bigl( \gamma_2\cdot f_2'(\lambda_P(D_2))\bigr)^{-1}.  \qed
\end{equation} 
\end{lemma}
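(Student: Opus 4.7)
The plan is to proceed by simply unwinding the definitions and then invoking the inverse function theorem. The defining equations \eqref{breakeven} can be rewritten as
\[
  (\gamma_1 f_1)\bigl(\lambda_P(D_1)\bigr) = D_1
  \quad\text{and}\quad
  (\gamma_2 f_2)\bigl(\lambda_Z(D_2)\bigr) = D_2,
\]
which is precisely the statement that $\lambda_P$ is a right inverse of $\gamma_1 f_1$ and $\lambda_Z$ is a right inverse of $\gamma_2 f_2$ on the domains where these right inverses are defined. This takes care of the first assertion of the lemma with essentially no work.

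Next I would verify that $\lambda_P$ and $\lambda_Z$ exist as genuine functions and inherit differentiability. Since the standing hypotheses give $f_1'(N) > 0$ and $f_2'(P) > 0$, the maps $\gamma_1 f_1$ and $\gamma_2 f_2$ are strictly increasing, hence injective; the boundedness conditions together with $\lim_{N\to\infty} f_1(N) > D_1/\gamma_1$ and $\lim_{P\to\infty} f_2(P) > D_2/\gamma_2$ ensure that the target values $D_1$ and $D_2$ lie in the range, so $\lambda_P(D_1)$ and $\lambda_Z(D_2)$ are well defined. The inverse function theorem, applied at each point where $f_i' > 0$, then yields that $\lambda_P$ and $\lambda_Z$ are as differentiable as $f_1$ and $f_2$.

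Finally, to establish \eqref{lambdaderivs} I would differentiate the identity $\gamma_1 f_1(\lambda_P(D_1)) = D_1$ with respect to $D_1$ using the chain rule, obtaining
\[
  \gamma_1 \cdot f_1'\bigl(\lambda_P(D_1)\bigr) \cdot \lambda_P'(D_1) = 1,
\]
and then solve for $\lambda_P'(D_1)$; the formula for $\lambda_Z'(D_2)$ follows by an identical computation. There is no real obstacle here: the whole argument is a direct application of the inverse function theorem, and the only thing worth flagging is that one should check the standing hypotheses on $f_1$, $f_2$ are strong enough to guarantee the right inverses are defined on the intervals of $D_1$ and $D_2$ of interest, which they are.
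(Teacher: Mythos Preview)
Your proposal is correct and is exactly the natural argument; the paper itself gives no proof at all (the \qed appears at the end of the lemma statement), treating the result as an immediate consequence of the inverse function theorem applied to the defining relations \eqref{breakeven}. Your write-up simply makes explicit what the paper leaves implicit.
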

Kuang and Li \cite{Kuang1999} studied this system with general functional responses and distinct values of 
$D$, $D_1$, and $D_2$.  However, they fixed the input nutrient concentration, whereas we have this as a parameter.
With the hypothesis that $D{=}D_1{=}D_2$, they provide numerical criteria for the stability of a coexistence equilibrium,
and prove that a cycle exists when the equilibrium is unstable \cite[Theorem~3.2]{Kuang1999}.
When the hypothesis $D{=}D_1{=}D_2$ does not hold, they provide numerical evidence 
that stability of the coexistence equilibrium breaks down and a cycle appears.
A similar model was studied in  \cite{Zhang2012} with functional response $f_1(N)$ of Holling type I and $f_2(P)$ of Holling type II,
demonstrating the existence of a Hopf bifurcation in response to varying nutrient concentration. 
These results inspire our work.
Our goal is to prove analytically that the system undergoes a Hopf bifurcation without restricting the forms of the
uptake functions or the values of the removal rates $D_1$ and $D_2$. 

This paper is organized as follows. 
In section \ref{Equilibria}, 
conditions for existence and local stability of predator-free equilibria are obtained for general functional responses $f_1(N)$ and $f_2(P)$.
In section \ref{Coexistence_equilibrium_stability}, we study stability of a coexistence equilibrium
which appears as the parameter $\mu$ increases.
We  quote a version of the Hopf bifurcation theorem, stating the result in a limited form most useful for us.
Accordingly, application of this theorem requires us to develop  specific information about the behavior of eigenvalues of
the linearizations as the paramter $\mu$ increases. 
In section \ref{Cycles}, we use a smooth change of variables to reach a situation where the Hopf bifurcation theorem in three 
dimensions applies, enabling us to conclude the existence of cycles.
In section \ref{Examples}, our results are illustrated using simulations arising by choosing rate functions of
Holling type II and Holling type III forms. 
In section \ref{Appendix}, we explain in detail the approximations and estimates that support our work 
in the latter part of section \ref{Coexistence_equilibrium_stability}.
\section{Steady States and  Their Stability}
\label{Equilibria}
To begin, we establish that the solutions of \eqref{NPZsys} are  nonnegative and bounded.
These are minimum requirements for a reasonable model of the chemostat. We
then develop conditions for the existence and stability of equilibria.
We conclude the section by proving uniform persistence in the sense of 
\cite{Butler1986} when $\mu$ is sufficiently large and the initial values $P_0$ and $Z_0$ are positive.
\begin{lemma} \label{positivity_boundedness}
All solutions $N(t)$, $P(t)$, and $Z(t)$ of \eqref{NPZsys} are  nonnegative and bounded.
\end{lemma}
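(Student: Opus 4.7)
My plan is to handle nonnegativity and boundedness separately, as is standard for chemostat-type systems.

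For nonnegativity, I would argue by invariance of the boundary faces combined with uniqueness of solutions to the initial value problem. Each of the coordinate hyperplanes $P=0$, $Z=0$, and $N=0$ should be checked. First, because $f_2(0)=0$, the equation for $dP/dt$ factors as $P\bigl(\gamma_1 f_1(N) - D_1\bigr) - Z f_2(P)$, so $P(t)\equiv 0$ whenever $P_0 = 0$; similarly $Z(t)\equiv 0$ when $Z_0 = 0$ since $dZ/dt = Z\bigl(\gamma_2 f_2(P)-D_2\bigr)$. For $N$, the relation $f_1(0)=0$ gives $dN/dt\big|_{N=0} = \mu D > 0$, so trajectories are pushed into $N>0$ at the boundary. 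Uniqueness (guaranteed by the smoothness of $f_1$ and $f_2$) then prevents solutions from crossing into any of the three negative half-spaces, yielding nonnegativity of each coordinate for all $t$ in the interval of existence.

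For boundedness, the key idea is to form a linear combination of $N$, $P$, $Z$ in which the interaction terms cancel. Using the yield coefficients to weight the components, set
\begin{equation*}
W(t) = N(t) + \frac{P(t)}{\gamma_1} + \frac{Z(t)}{\gamma_1 \gamma_2}.
\end{equation*}
Adding the three equations of \eqref{NPZsys} with these weights makes the $Pf_1(N)$ and $Zf_2(P)$ contributions telescope, leaving
\begin{equation*}
\frac{dW}{dt} = \mu D - DN - \frac{D_1}{\gamma_1}P - \frac{D_2}{\gamma_1\gamma_2}Z \le \mu D - \delta W,
\end{equation*}
where $\delta = \min\{D,D_1,D_2\} > 0$ by our hypothesis that $D$, $D_1$, $D_2$ are positive. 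A standard comparison/Gr\"onwall argument then gives $W(t)\le\max\bigl\{W(0),\mu D/\delta\bigr\}$ for all $t\ge 0$ in the interval of existence, so $W$ is bounded. Since each of $N$, $P$, $Z$ is nonnegative, each is individually bounded by a constant multiple of $W$, and in particular solutions cannot blow up in finite time, so they extend to all $t\ge 0$.

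The main (minor) technical point is really the boundary behavior at $N=0$, where one must confirm that the flow is inward rather than tangent; the inequality $dN/dt|_{N=0} = \mu D > 0$ handles this cleanly once we know $P$ and $Z$ stay nonnegative, so I expect no serious obstacle. The rest is just bookkeeping with the weighted sum $W$.
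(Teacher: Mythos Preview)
Your proposal is correct and follows essentially the same route as the paper: invariance of the faces $P=0$ and $Z=0$ plus the inward flow at $N=0$ for nonnegativity, and the weighted sum $W=N+\gamma_1^{-1}P+(\gamma_1\gamma_2)^{-1}Z$ together with the differential inequality $W'\le \mu D-\delta W$, $\delta=\min\{D,D_1,D_2\}$, for boundedness. The only cosmetic difference is that the paper phrases the $N$-nonnegativity step as a first-hitting-time contradiction rather than an inward-pointing vector field argument, and it does not explicitly remark on global existence in time.
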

\begin{proof}
The plane $Z = 0$ is  invariant for system \eqref{NPZsys}. 
Therefore, by existence and uniqueness, if $Z_0 > 0$ then $Z(t) > 0$ for all $t \geq 0$. 
Similarly, since $f_2(0)=0$, the plane $P = 0$ is invariant, so $P_0 > 0$ implies $P(t) > 0$ for all  $t \geq 0$.

Suppose $N_0 > 0$.  
If there exists a $t > 0$ with $N(t) = 0$, then there is a least such number, say $t_0$. 
Then $N'(t_0) = \mu D > 0$ since $f_1(0)=0$. 
Consequently, there is $t<t_0$ such that $N(t) < 0$, which is a contradiction to the choice of $t_0$.

For the boundedness of solutions, set $U(t) = N(t) + \gamma_1^{-1}P(t) +(\gamma_1 \gamma_2)^{-1}Z(t)$.
From system \eqref{NPZsys}, it follows that 
\begin{equation*}
U'(t) = D \mu - D N(t) -\gamma_1^{-1} D_1 P(t) 
           -(\gamma_1 \gamma_2)^{-1}D_2 Z(t) \leq  D \mu - \widehat{D} U(t),
\end{equation*}
where $\widehat{D} = \min\{D,D_1,D_2\}$. Then 
\begin{align*} 
  U(t) &\leq (D\mu)/{\widehat{D}} + \bigl(U(0) - (D\mu)/\widehat{D} \bigr) e^{-\widehat{D}t} 
  \\
  &\leq
    \begin{cases}  U(0),  &   \text{if  $U(0) > (D\mu)/\widehat{D}$,} \notag
    \\
    (D\mu)/\widehat{D}, &  \text{if   $U(0) \leq  (D\mu)/\widehat{D}$.}  
    \end{cases} 
\end{align*} 
Since $N(t)$, $P(t)$, and $Z(t)$ are nonnegative, 
the boundedness of $U(t)$ implies the boundedness of  $N(t)$, $P(t)$, and $Z(t)$.
\end{proof}

There are at most three  biologically relevant equilibria of 
system \eqref{NPZsys} depending on the value of $\mu$. 
The equilibria and the conditions of their existence are summarized in the following theorem.
\begin{theorem}  \label{existence equilibria NPZ}
Let $\lambda_P(D_1)$ and $\lambda_Z(D_2)$ be as in \eqref{breakeven}.
 The equilibria of the system \eqref{NPZsys} satisfy  the following conditions:
\begin{enumerate}
    \item The washout equilibrium $E_{0} = (\mu,0,0)$ exists for all $\mu > 0$.    
    \item The single-species equilibrium $E_1(\mu, D_1) = ( \lambda_P(D_1), P(\mu, D_1), 0)$ exists for all $\mu > \lambda_P(D_1)$, 
 where 
 \begin{equation}
   \label{Poneformula}
    P(\mu, D_1) = \bigl(\mu - \lambda_P(D_1)\bigr) \frac{D \gamma_1}{D_1}.
 \end{equation}
    \item The coexistence equilibrium 
      \begin{equation*}
E_2(\mu, D_1, D_2) = \bigl(N(\mu, D_1, D_2), \lambda_Z(D_2), Z(\mu, D_1, D_2)\bigr)        
      \end{equation*}
exists for all $\displaystyle{\mu > \mu_{ c_1}(D_1,D_2) } $, 
where 
    \begin{equation}\label{mu_{c1}}
    \mu_{c_1}(D_1,D_2)= \lambda_P(D_1) + \frac{D_1 \lambda_Z(D_2)}{D\gamma_1}
    \end{equation}
and 
$N(\mu, D_1, D_2)$, $Z(\mu, D_1, D_2)$ satisfy the simultaneous equations 
\begin{gather}
 \bigl(\mu - N(\mu, D_1, D_2)\bigr) D - \lambda_Z(D_2) f_1\bigl(N(\mu, D_1, D_2)\bigr) = 0,  \label{1steqnwithZ_2notzero}
\\
 \gamma_1 \lambda_Z(D_2) f_1\bigl(N(\mu, D_1, D_2)\bigr) - D_1 \lambda_Z(D_2) 
          - Z(\mu, D_1, D_2) f_2\bigl(\lambda_Z(D_2)\bigr) = 0. \label{2ndeqnwithZ_2notzero}
 \end{gather}
\end{enumerate}
\end{theorem}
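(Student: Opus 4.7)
The plan is to set all right-hand sides of \eqref{NPZsys} to zero and classify solutions according to which of $Z$ and $P$ vanish. Starting with the third equation, one factors $Z\bigl(\gamma_2 f_2(P) - D_2\bigr) = 0$, so either $Z = 0$ or $f_2(P) = D_2/\gamma_2$; by the definition of $\lambda_Z(D_2)$ in \eqref{breakeven} and the strict monotonicity of $f_2$, the latter forces $P = \lambda_Z(D_2)$. I would pursue the two cases separately.

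In the case $Z = 0$, the same trick applied to the second equation yields $P = 0$ or $N = \lambda_P(D_1)$. If $P = 0$ as well, the first equation gives $N = \mu$, producing $E_0$ for any $\mu > 0$. If $P \neq 0$, substituting $N = \lambda_P(D_1)$ into the first equation and using $f_1(\lambda_P(D_1)) = D_1/\gamma_1$ yields the closed form \eqref{Poneformula} for $P(\mu,D_1)$, and $P > 0$ is equivalent to $\mu > \lambda_P(D_1)$, giving $E_1$. In the case $Z \neq 0$, $P = \lambda_Z(D_2)$ is forced, and \eqref{1steqnwithZ_2notzero} and \eqref{2ndeqnwithZ_2notzero} are exactly what remains from the first two equilibrium equations.

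The key analytic step is to show \eqref{1steqnwithZ_2notzero} has a unique solution $N(\mu,D_1,D_2) \in (0,\mu)$ and to translate the positivity condition $Z > 0$ into the threshold $\mu_{c_1}$. I would introduce $g(N) = (\mu - N)D - \lambda_Z(D_2) f_1(N)$ and observe that $g(0) = \mu D > 0$, $g(\mu) = -\lambda_Z(D_2) f_1(\mu) < 0$, and $g'(N) = -D - \lambda_Z(D_2) f_1'(N) < 0$ by the standing hypotheses on $f_1$; existence and uniqueness of $N(\mu,D_1,D_2)$ follow from the Intermediate Value Theorem and strict monotonicity. Solving \eqref{2ndeqnwithZ_2notzero} for $Z$ gives
\begin{equation*}
 Z(\mu,D_1,D_2) = \frac{\lambda_Z(D_2)\bigl(\gamma_1 f_1(N(\mu,D_1,D_2)) - D_1\bigr)}{f_2(\lambda_Z(D_2))},
\end{equation*}
and since $f_1$ is strictly increasing with $f_1(\lambda_P(D_1)) = D_1/\gamma_1$, positivity of $Z$ is equivalent to $N(\mu,D_1,D_2) > \lambda_P(D_1)$. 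Because $g$ is strictly decreasing, this is equivalent to $g(\lambda_P(D_1)) > 0$, which simplifies via $f_1(\lambda_P(D_1)) = D_1/\gamma_1$ to $\mu > \lambda_P(D_1) + D_1\lambda_Z(D_2)/(D\gamma_1) = \mu_{c_1}(D_1,D_2)$.

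The only mild obstacle is that no closed form for $N(\mu,D_1,D_2)$ is available without specifying $f_1$, so the existence statement for $E_2$ is intrinsically implicit; everything else reduces to elementary algebra combined with the monotonicity assumptions on $f_1$ and $f_2$.
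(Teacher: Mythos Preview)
Your proof is correct and follows essentially the same approach as the paper: factor the $Z$-equation first, split into cases, derive $E_0$ and $E_1$ exactly as the paper does, and for $E_2$ translate $Z>0$ into $N>\lambda_P(D_1)$ and then into the threshold $\mu_{c_1}$. You add one detail the paper glosses over---the explicit Intermediate Value Theorem argument via the strictly decreasing auxiliary function $g(N)$ to establish existence and uniqueness of $N(\mu,D_1,D_2)$---whereas the paper simply asserts that \eqref{1steqnwithZ_2notzero} and \eqref{2ndeqnwithZ_2notzero} define $N$ and $Z$ as implicit functions and defers the smoothness discussion to Lemma~\ref{NZsmoothness}.
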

Thus, for $\mu \leq \lambda_P(D_1)$ only the equilibrium $E_0$ exists; for
$\lambda_P(D_1) < \mu \leq \mu_{c_1}(D_1,D_2)$, there are two equilibria
$E_0, E_1$; and, when $ \mu_{c_1}(D_1,D_2) < \mu$, there are three equilibria $E_0$, $E_1$, and $E_2$.
\begin{proof}
From the $Z$-equation, 
either $Z = 0$ or $\gamma_2 f_2(P) - D_2 = 0$ (so that $P=\lambda_Z(D_2)$). 
If $Z = 0$, the $P$-equation yields  
\begin{equation*}
   0 = \gamma_1 P f_1(N) - D_1 P,
\end{equation*}
which implies either $P = 0$ or $\gamma_1 f_1(N) - D_1 =0$ (so that $N=\lambda_P(D_1)$).  

If $Z=0$ and $P=0$, then the $N$-equation gives 
$0= (\mu - N) D $, so that $N = \mu$. Thus, the washout equilibrium is given by $E_0 = (\mu, 0, 0)$ 
and exists for $\mu > 0$. 
Note that as $\mu$ increases, $E_0$ moves along the $N$-axis in  $\bR_{\geq 0}^{3}$. 
This is the proof of the first part of the theorem.

If $Z = 0$ and $P \neq 0$, then $N = \lambda_P(D_1)$ in the $N$-equation gives 
\begin{equation} \label{P_1 equation}
  0 = \bigl(\mu - \lambda_P(D_1)\bigr) D - P \frac{D_1}{\gamma_1},
\quad \text{with solution} \quad
P = \bigl(\mu - \lambda_P(D_1)\bigr) \frac{D \gamma_1}{D_1}\mathrel{\mathop :}= P(\mu, D_1).
\end{equation}
Note that $P(\mu, D_1)>0$ for all $\mu>\lambda_P(D_1)$, 
and so the single-species equilibrium 
$E_{1}(\mu, D_1) = (\lambda_P(D_1), P(\mu, D_1),0)$ 
exists in the positive cone for all $\mu > \lambda_P(D_1)$.
This is  the proof of the second part of the theorem.

If $Z \neq 0$, then $P = \lambda_Z(D_2)$ in the $N$- and $P$-equations, and
equations \eqref{1steqnwithZ_2notzero}  and \eqref{2ndeqnwithZ_2notzero}
 define $N$ and $Z$ as implicit functions of $\mu$, $D_1$, and $D_2$.
 
 We now determine the critical value $\mu_{c_1}(D_1,D_2)$ of $\mu$ at which 
$E_2(\mu, D_1, D_2)$
first appears in the positive cone. 
 Equation \eqref{2ndeqnwithZ_2notzero} implies
\begin{equation} \label{valueZ_2}
Z(\mu, D_1, D_2) = \frac{\gamma_2}{D_2}\lambda_Z(D_2)\bigl(\gamma_1f_1\bigl(N(\mu, D_1, D_2)\bigr) - D_1 \bigr). 
\end{equation} 
Since  $f_1$ is strictly increasing, $Z(\mu, D_1, D_2) =  0$ if and only if $N(\mu, D_1, D_2) = \lambda_P(D_1)$,
and $Z(\mu, D_1, D_2) >  0$ for $N(\mu, D_1, D_2) > \lambda_P(D_1)$.
Substituting $N(\mu, D_1, D_2) = \lambda_P(D_1)$  in \eqref{1steqnwithZ_2notzero}, we obtain the equation
\begin{equation*} 
    0 = \bigl( \mu - \lambda_P(D_1)\bigr) D - \lambda_Z(D_2) \frac{D_1}{\gamma_1} 
\; \text{with solution} \; 
\mu = \lambda_P(D_1) + \frac{D_1 \lambda_Z(D_2)}{D\gamma_1}  \mathrel{\mathop :}= \mu_{c_1}(D_1,D_2).
\end{equation*}
Thus, $Z(\mu, D_1, D_2)$ is positive when $\mu > \mu_{c_1}(D_1, D_2)$,
and the coexistence equilibrium 
$E_2 (\mu, D_1, D_2)= \bigl(N(\mu, D_1, D_2), \lambda_Z(D_2), Z(\mu, D_1, D_2)\bigr)$  
exists in the positive cone for all $\mu > \mu_{c_1}(D_1, D_2)$. 
This proves part  three of the theorem.
\end{proof} 
In Theorems~\ref{NPZ stability E_0}, \ref{NPZ stability of E_1}, and \ref{coexistence_equilibrium_1} 
we investigate the stability of the equilibria of system \eqref{NPZsys} 
by finding the eigenvalues of the associated Jacobian matrices. 
The Jacobian matrix of the system \eqref{NPZsys} takes the form
\begin{equation} \label{Jacobian}
J =
  \begin{bmatrix}
  -D & -f_{1}(N) & 0\\
\gamma_{1}P f_1'(N) & \gamma_{1}f_{1}(N)-D_{1}-Z f_2'(P) & -f_{2}(P)   \\
0 & \gamma_{2}Z f_2'(P) & \gamma_{2}f_{2}(P) -D_{2}
\end{bmatrix}  .
\end{equation}
We first summarize the stability of $E_0$ in the following theorem. 
Here, the breakeven  concentration of nutrient given in \eqref{breakeven} plays a critical role.
\begin{theorem} \label{NPZ stability E_0}
 The equilibrium point  $E_{0}$ is locally asymptotically stable 
if $\mu < \lambda_P(D_1)$ and unstable if $\mu > \lambda_P(D_1)$.  
When $\mu > \lambda_P(D_1)$, $E_0$ is globally asymptotically stable 
with respect to solutions initiating in $\{(N,P,Z) \in \bR^3_+ \mid P = 0 \}$.
That is, the plane $P=0$ is $m^+(E_0)$, the stable manifold of $E_0$.
\end{theorem}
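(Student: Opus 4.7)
The plan is to split the argument into two pieces: a linearization at $E_0$ handles the local stability assertions, and a direct integration on the invariant plane $P=0$ handles the global claim and the identification of the stable manifold.

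First I would evaluate the Jacobian \eqref{Jacobian} at $E_0=(\mu,0,0)$. Using $f_1(0)=f_2(0)=0$ together with $P=Z=0$, all entries in the first column below the diagonal and the entry $J_{32}$ vanish, so
\begin{equation*}
J(E_0)=\begin{bmatrix} -D & -f_1(\mu) & 0 \\ 0 & \gamma_1 f_1(\mu)-D_1 & 0 \\ 0 & 0 & -D_2 \end{bmatrix}
\end{equation*}
is upper triangular, with spectrum $\{-D,\,\gamma_1 f_1(\mu)-D_1,\,-D_2\}$. Since $D,D_2>0$, the sign of the middle eigenvalue determines stability. Because $f_1$ is strictly increasing and $f_1(\lambda_P(D_1))=D_1/\gamma_1$ by \eqref{breakeven}, we have $\gamma_1 f_1(\mu)-D_1<0$ iff $\mu<\lambda_P(D_1)$. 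This gives the local asymptotic stability for $\mu<\lambda_P(D_1)$ and instability for $\mu>\lambda_P(D_1)$ by the standard Hartman--Grobman argument.

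For the global statement, I would note that on the invariant plane $P=0$ (invariance was already established in Lemma~\ref{positivity_boundedness}), the system \eqref{NPZsys} decouples into the two scalar linear ODEs
\begin{equation*}
N'(t)=(\mu-N(t))D, \qquad Z'(t)=-D_2 Z(t),
\end{equation*}
whose solutions are $N(t)=\mu+(N_0-\mu)e^{-Dt}$ and $Z(t)=Z_0 e^{-D_2 t}$. Both converge monotonically to the values $N=\mu$, $Z=0$, so every trajectory with $P_0=0$ tends to $E_0$. This proves the global asymptotic stability claim relative to the plane.

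Finally, to identify $\{P=0\}$ with the stable manifold $m^+(E_0)$ when $\mu>\lambda_P(D_1)$: the eigenvectors of $J(E_0)$ corresponding to the negative eigenvalues $-D$ and $-D_2$ lie in the $N$-axis and $Z$-axis respectively, so the local stable manifold of $E_0$ is two-dimensional and tangent to the $NZ$-plane. The $NZ$-plane is a two-dimensional, forward-invariant, connected submanifold of $\bR^3_+$ all of whose points are attracted to $E_0$, so it coincides with the global stable manifold of $E_0$ in $\bR^3_+$. No step is especially delicate here; the only point requiring a little care is noting that the triangular structure of $J(E_0)$ makes the eigenvalue analysis immediate rather than requiring a general polynomial argument.
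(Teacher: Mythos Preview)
Your proof is correct. The local stability argument is identical to the paper's: compute the Jacobian at $E_0$, observe it is upper triangular, and read off the eigenvalues $-D$, $\gamma_1 f_1(\mu)-D_1$, $-D_2$.

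For the global statement on the plane $P=0$ you take a different route from the paper. The paper uses the Lyapunov function $L(N,Z)=\tfrac12(\mu-N)^2+\tfrac12 Z^2$ and checks that $L'=-D(\mu-N)^2-D_2 Z^2<0$ off the equilibrium. You instead observe that on $P=0$ the system decouples into two scalar linear ODEs and simply write down the explicit solutions $N(t)=\mu+(N_0-\mu)e^{-Dt}$, $Z(t)=Z_0 e^{-D_2 t}$. Your approach is the more elementary one here, since the restricted system is linear and solvable in closed form; the Lyapunov argument is a bit of overkill for this particular plane, though it has the virtue of generalizing to situations where explicit solutions are not available. You also make the identification of the stable manifold with the $NZ$-plane slightly more explicit than the paper does, by noting the eigenspaces and invoking the two-dimensionality and invariance of the plane.
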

\begin{proof}
The Jacobian at $E_{0}=(\mu, 0 ,0)$ is
\begin{equation*}
J(E_{0}) =
  \begin{bmatrix}
  -D & -f_{1}(\mu) & 0\\
0 & \gamma_{1}f_{1}(\mu)-D_{1}& 0   \\
0 & 0 & -D_{2}
\end{bmatrix},
\end{equation*}
so that the eigenvalues of $J(E_0)$ are $x_1 = -D, x_2 = \gamma_1 f_1(\mu) - D_1$, 
and  $\displaystyle{x_3 = -D_2 }$. The stability of $E_0$ now follows from \eqref{breakeven} and the fact that $f_1$ is strictly increasing: 
$x_2<0$ when $\mu<\lambda_P(D_1)$ and $x_2>0$ when $\mu>\lambda_P(D_1)$. 
To see that $m^+(E_0)= \{(N,P,Z) \in \bR^3_+ \mid P = 0 \}$ when $\mu>\lambda_P(D_1)$, 
consider the Lyapunov function
\begin{equation*}
    L(N,Z) = \frac{(\mu -N)^2}{2}  +  \frac{Z^2}{2}.
\end{equation*}
Clearly, $L(\mu,0) = 0$ and $L(N,Z) > 0$ if $(N,Z) \neq (\mu,0) $. 
 The time derivative of $L(N,Z)$ at a point $(N,0,Z)$ on a trajectory  of system \eqref{NPZsys} is
 \begin{equation*}
   L'(N,Z) = -D(\mu-N)^2-D_2Z^2 < 0 
 \end{equation*}
for $(N,Z) \ne (\mu,0)$. 
Thus $E_0$ is globally asymptotically stable in the plane $P = 0$. 
\end{proof} 

For $\mu=\lambda_P(D_1)$, $P(\lambda_P(D_1), D_1)=0$, so that $E_0$ and $E_1$ coalesce (see equation \eqref{Poneformula}).
When $\mu > \lambda_P(D_1) $, $E_1(\mu,D_1) = (\lambda_{P}(D_1),P(\mu, D_1),0)$ enters the positive cone. 
We summarize the stability of $E_{1}(\mu, D_1)$ in the following theorem. 
Note that the critical value of $\mu$ given in \eqref{mu_{c1}} now plays a central role.
\begin{theorem} \label{NPZ stability of E_1}
The equilibrium point  $E_1(\mu, D_1)$ is locally stable if $\lambda_P(D_1)< \mu < \mu_{c_1}(D_1,D_2)$ and unstable 
if $\mu > \mu_{c_1}(D_1,D_2)$.  
When $\mu > \mu_{c_1}(D_1,D_2)$,  $E_1(\mu, D_1)$ is globally asymptotically stable 
with respect to solutions initiating in $\{(N,P,Z) \in \bR^3_+ \mid Z = 0 \}$.
That is, the plane $Z=0$ is  $m^+\bigl(E_1(\mu, D_1)\bigr)$,  the stable manifold of $E_1(\mu, D_1)$.
\end{theorem}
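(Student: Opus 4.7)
The plan is to use the block structure of the Jacobian at $E_1$ to separate the local stability analysis into a $2 \times 2$ planar piece and a single scalar eigenvalue, and then to establish the global claim by reducing to a two dimensional system in the invariant plane $Z = 0$ and applying Dulac's criterion together with Poincar\'e--Bendixson.

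First I would evaluate the matrix \eqref{Jacobian} at $E_1(\mu,D_1) = (\lambda_P(D_1), P(\mu,D_1), 0)$. Since $\gamma_1 f_1(\lambda_P(D_1)) = D_1$ by \eqref{breakeven} and since $Z = 0$ kills the coupling terms in the third row, $J(E_1)$ is block lower triangular: the upper left $2 \times 2$ block is
\begin{equation*}
  \begin{bmatrix} -D & -D_1/\gamma_1 \\ \gamma_1 P(\mu,D_1) f_1'(\lambda_P(D_1)) & 0 \end{bmatrix},
\end{equation*}
and the $(3,3)$ entry gives the scalar eigenvalue $x_3 = \gamma_2 f_2(P(\mu,D_1)) - D_2$. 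The $2 \times 2$ block has trace $-D < 0$ and determinant $D_1 P(\mu,D_1) f_1'(\lambda_P(D_1)) > 0$ for every $\mu > \lambda_P(D_1)$, so its two eigenvalues have negative real parts by the Routh--Hurwitz criterion. Stability of $E_1$ therefore reduces to the sign of $x_3$: by strict monotonicity of $f_2$ together with $f_2(\lambda_Z(D_2)) = D_2/\gamma_2$, one has $x_3 < 0 \Longleftrightarrow P(\mu,D_1) < \lambda_Z(D_2)$, and substituting \eqref{Poneformula} and solving for $\mu$ returns exactly $\mu < \mu_{c_1}(D_1,D_2)$. This settles the local stability dichotomy.

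For the global statement, I would restrict to the invariant plane $Z = 0$, where the dynamics becomes
\begin{equation*}
  N' = (\mu - N)D - P f_1(N), \qquad P' = P\bigl(\gamma_1 f_1(N) - D_1\bigr).
\end{equation*}
Lemma~\ref{positivity_boundedness} confines trajectories to a compact positively invariant subset of the first quadrant, and on the $N$-axis the flow is one dimensional with $N(t) \to \mu$, so $E_0$ does not absorb any orbit starting with $P_0 > 0$. In the simply connected open quadrant $\{N > 0,\ P > 0\}$ I would apply Dulac's criterion with the weight $B(N,P) = 1/P$: a direct calculation gives
\begin{equation*}
  \frac{\partial(BN')}{\partial N} + \frac{\partial(BP')}{\partial P} = -\frac{D}{P} - f_1'(N) < 0,
\end{equation*}
ruling out any periodic orbit. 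Poincar\'e--Bendixson then forces every orbit with $P_0 > 0$ to accumulate on the unique interior equilibrium $E_1$, and since the plane $Z = 0$ has dimension two this identifies it with $m^+(E_1)$ exactly when $E_1$ has become a three dimensional saddle.

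The main obstacle is more presentational than technical. The Dulac computation in fact works for all $\mu > \lambda_P(D_1)$, but the theorem is phrased only for $\mu > \mu_{c_1}(D_1,D_2)$, where $E_1$ has a one dimensional unstable direction off of the plane and its planar basin is genuinely a codimension one stable manifold rather than just an open three dimensional neighborhood. I would need to articulate this interpretive point, and to acknowledge that the measure zero invariant set $\{P_0 = 0,\ Z_0 = 0\}$ is necessarily excluded from the basin, since on it trajectories converge to $E_0$ rather than to $E_1$.
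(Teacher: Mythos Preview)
Your local stability argument is essentially identical to the paper's: both exploit the block-triangular structure of $J(E_1)$, check the trace/determinant of the $2\times 2$ block, and reduce the dichotomy to the sign of $x_3 = \gamma_2 f_2(P(\mu,D_1)) - D_2$.

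For the global statement in the plane $Z=0$, however, you take a genuinely different route. The paper constructs Hsu's Lyapunov function
\[
L(N,P) = \int_{\lambda_P(D_1)}^N \frac{f_1(n)-f_1(\lambda_P(D_1))}{f_1(n)}\,dn
+ \frac{1}{\gamma_1}\Bigl(P - P(\mu,D_1) - P(\mu,D_1)\ln\bigl(P/P(\mu,D_1)\bigr)\Bigr),
\]
carries out a somewhat delicate case analysis to show $L'\le 0$ with equality only on the line $N=\lambda_P(D_1)$, and then invokes LaSalle's invariance principle. Your Dulac argument with weight $1/P$ is correct and considerably shorter: the divergence computation is a two-line calculation, and Poincar\'e--Bendixson together with the saddle structure of $E_0$ in the plane (stable manifold $\{P=0\}$) finishes the job. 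The trade-off is that the Lyapunov approach is dimension-independent in spirit and yields an explicit decreasing quantity, whereas your argument leans on the two-dimensionality of the invariant plane and requires the extra step of excluding $E_0$ from $\omega$-limit sets---a step you sketch correctly but which the paper avoids entirely since LaSalle pins down the limit set directly. Your final remark about the exclusion of $\{P_0=0\}$ and the interpretation of the plane as $m^+(E_1)$ only when $E_1$ is a saddle is apt; the paper's statement implicitly restricts to $P_0>0$ as well.
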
 
\begin{proof} 
The Jacobian matrix at $E_1(\mu, D_1)$ is
\begin{equation*} 
J\bigl(E_{1}(\mu, D_1)\bigr) =
\\
  \begin{bmatrix}
  -D-P(\mu, D_1) f_1'\bigl(\lambda_{P}(D_1)\bigr) & -f_{1}\bigl(\lambda_{P}(D_1)\bigr) & 0\\
\gamma_{1}P(\mu, D_1) f_1'\bigl(\lambda_{P}(D_1)\bigr) & 0& -f_{2}\bigl(P(\mu, D_1)\bigr))  \\
0 & 0 & \gamma_{2}f_{2}\bigl(P(\mu, D_1)\bigr)-D_{2}
\end{bmatrix}.  
\end{equation*}
The determinant of the upper lefthand $2$-by-$2$ submatrix is positive and its trace is negative, 
so its eigenvalues have negative real parts. The third eigenvalue is 
$x_3 = \gamma_{2}f_{2}\bigl(P\bigl(\mu, D_1)\bigr)-D_{2}$. 
If   $\mu < \mu_{c_1}(D_1,D_2)$, so that $P(\mu, D_1) < \lambda_{Z}(D_2)$, then $x_3<0$, and $E_1(\mu, D_1)$ is locally stable. 
Similarly, if $\mu >\mu_{c_1}(D_1,D_2)$, so that $P(\mu, D_1) > \lambda_Z(D_2)$, 
then $x_3>0$ and $E_1(\mu, D_1)$ is unstable with one dimension of instability.

To see that $m^+(E_1)=\{(N,P,Z) \in \bR^3_+ \mid Z = 0 \}$ when $\mu >
\mu_{c_1}$, consider the  Lyapunov function 
introduced by Hsu in \cite{Hsu1978}
\begin{multline*}
  L(N,P) = 
\\
\int_{\lambda_P(D_1)}^N \frac{ f_1(n) -f_1\bigl(\lambda_P(D_1)\bigr)}{f_1(n)} \, dn
+ 
\frac{1}{\gamma_1}\Bigl( P - P(\mu, D_1) - P(\mu, D_1) \ln \bigl(P/P(\mu, D_1)\bigr) \Bigr).
\end{multline*}
Notice that $L\bigl(\lambda_P(D_1), P(\mu, D_1)\bigr) = 0$, 
\begin{equation*}
  \frac{\partial L}{\partial N} = \frac{f_1(N) -f_1\bigl(\lambda_P(D_1)\bigr)}{f_1(N)} = 0
\quad \text{and} \quad
  \frac{\partial L}{\partial P} = \frac{1}{\gamma_1}\bigl( 1 - P(\mu, D_1)/P \bigr) = 0
\end{equation*}
precisely when $(N,P) = \bigl(\lambda_P(D_1),P(\mu, D_1)\bigr)$.  Moreover,
\begin{align*}
  \frac{\partial^2 L}{\partial N^2}\bigl(\lambda_P(D_1), P(\mu, D_1)\bigr) &= 
                   \frac{f'_1\bigl(\lambda_P(D_1)\bigr)}{f_1\bigl(\lambda_P(D_1)\bigr)}>0
\intertext{and}
  \frac{\partial^2 L}{\partial P^2}\bigl(\lambda_P(D_1), P(\mu, D_1)\bigr) &= \frac{1}{\gamma_1 P(\mu,D_1)}>0.
\end{align*}
Therefore, $\bigl(\lambda_P(D_1), P(\mu, D_1)\bigr)$ is the only critical point of $L(P,N)$ and it is a local minimum, so 
that $L(N,P) > 0$ for all $(N,P) \neq \bigl(\lambda_P(D_1), P(\mu, D_1)\bigr)$. 

Now we compute the time derivative of $L(N,P)$ at a point $(N,P,0)$ along a trajectory of system 
\eqref{NPZsys}. 
Noting from \eqref{breakeven} and \eqref{Poneformula} that 
\begin{equation*}
 f_1\bigl(\lambda_P(D_1)\bigr)=\frac{D_1}{\gamma_1} 
\quad \text{and} \quad 
P(\mu,D_1) = \frac{\bigl(\mu-\lambda_P(D_1)\bigr)D}{f_1\bigl(\lambda_P(D_1)\bigr)},  
\end{equation*}
we have
\begin{equation*} 
  \begin{split}
    L'(N,P) &=\frac{ f_1(N) -f_1\bigl(\lambda_P(D_1)\bigr) }{f_1(N)}\bigl((\mu-N)D - P f_1(N) \bigr)
\\
            &  \quad \quad \quad \quad    +  \frac{1}{\gamma_1}\biggl( 1 - \frac{P(\mu, D_1)}{P} \biggr) (\gamma_1 f_1(N) - D_1)   P 
\\
&= \bigl(f_1(N)-f_1\bigl(\lambda_P(D_1)\bigr)\bigr) 
        \biggl(  \frac{(\mu-N)D}{f_1(N)}  -  \frac{\bigl(\mu-\lambda_P(D_1)\bigr)D}{f_1\bigl(\lambda_P(D_1)\bigr)}\biggr)
\\
&= \bigl(f_1(N)-f_1\bigl(\lambda_P(D_1)\bigr)\bigr) \biggl( \frac{(\mu-N)}{\bigl(\mu-\lambda_P(D_1)\bigr)} 
           -  \frac{f_1(N)}{f_1\bigl(\lambda_P(D_1)\bigr)}\biggr) 
           \frac{\bigl(\mu - \lambda_P(D_1)\bigr) D}{f_1(N)}.
\end{split}
\end{equation*}
If $N < \lambda_P(D_1)$, then $f_1(N)-f_1\bigl(\lambda_P(D_1)\bigr) < 0$, so that $f_1(N)/f_1\bigl(\lambda_P(D_1)\bigr) < 1$.
Also, $\mu{-}N > \mu{-}\lambda_P(D_1)$, so that
\begin{equation*}
\frac{\mu {-} N}{\mu {-} \lambda_P(D_1)} > 1 > \frac{f_1(N)}{f_1\bigl(\lambda_P(D_1)\bigr)},\;\text{and}\;
\frac{\mu {-} N}{\mu {-} \lambda_P(D_1)} - \frac{f_1(N)}{f_1\bigl(\lambda_P(D_1)\bigr)} > 0.
\end{equation*}
Thus, $L^{\prime}(N,P) < 0$ when $N < \lambda_P(D_1)$.

If $N>\lambda_P(D_1)$, then $f_1(N)-f_1\bigl(\lambda_P(D_1)\bigr)>0$, so that $f_1(N)/f_1\bigl(\lambda_P(D_1)\bigr)>1$.
When $\mu > N > \lambda_P(D_1)$, we have $\mu {-} N < \mu {-} \lambda_P(D_1)$, so that
\begin{equation*}
\frac{\mu - N}{\mu - \lambda_P(D_1)} < 1 < \frac{f_1(N)}{f_1\bigl(\lambda_P(D_1)\bigr)}, 
\end{equation*}
while for $N \ge \mu$, then $\displaystyle{\frac{\mu - N}{\mu - \lambda_P(D_1)} < 0}$. In either case, 
$\displaystyle{\frac{\mu - N}{\mu - \lambda_P(D_1)} - \frac{f_1(N)}{f_1\bigl(\lambda_P(D_1)\bigr)} < 0}$, and so 
$L^{\prime}(N,P) < 0$ when $N>\lambda_P(D_1)$.

Finally, $L^{\prime}(N,P) = 0$ if and only if $N = \lambda_P(D_1)$.
By LaSalle's extension theorem \cite{LaSalleLefschetz1961}, any trajectory of system \eqref{NPZsys} in the
plane $Z=0$ for which $P_0>0$ approaches the largest invariant set in the line $N= \lambda_P(D_1)$, and
this is simply $\{E_1(\mu, D_1)\}$.
Therefore, $E_1(\mu, D_1)$ is globally asymptotically stable in the plane $Z=0$. 
\end{proof}
When $\mu = \mu_{c_1}(D_1, D_2)$, $E_1(\mu, D_1)$ and $E_2(\mu, D_1, D_2)$ coalesce.
With $\mu = \mu_{c_1}(D_1, D_2)$,  we have
$P\bigl(\mu_{c_1}(D_1, D_2), D_1\bigr) = \lambda_Z(D_2)$ (see equations \eqref{mu_{c1}} and \eqref{Poneformula}).
Also $N(\mu, D_1, D_2)=\lambda_P(D_1)$, so that $Z(\mu, D_1, D_2)=0$ (see the discussion around \eqref{valueZ_2}).
Thus,
\begin{equation*}
E_2(\mu_{c_1}(D_1, D_2), D_1, D_2) = (\lambda_P(D_1), \lambda_Z(D_2), 0) = E_1(\mu_{c_1}(D_1, D_2), D_1).
\end{equation*}
Said another way, as $\mu$ increases through $\mu_{c_1}(D_1,D_2)$,
$E_2(\mu, D_1, D_2)$ enters the positive cone by passing through $E_1(\mu, D_1)$.

With $f_2\bigl(\lambda_Z(D_2)\bigr) = D_2/\gamma_2$, the Jacobian matrix at $E_2(\mu, D_1, D_2)$  takes the form
\begin{multline} \label{JacobianE2}
J\bigl(E_{2}(\mu, D_1, D_2)\bigr) =
\\
 \begin{bmatrix}
  -D{-}\lambda_Z(D_2)f_1'(N(\mu,\! D_1,\! D_2)) & \mspace{-15mu} -f_{1}(N(\mu, \! D_1,\! D_2)) & \mspace{-1mu} 0 
\\
\gamma_{1}\lambda_{Z}(D_2)f_1'(N(\mu,\! D_1,\! D_2)) 
           &\mspace{-15mu}
           \begin{matrix}
             \Bigl( \gamma_{1}f_{1}(N(\mu,\! D_1, \!D_2)){-}D_{1}  \mspace{15mu}
\\
                   \mspace{45mu}    {-}Z(\mu,\! D_1,\! D_2)
                   f_2'(\lambda_{Z}(D_2)) \Bigr)
           \end{matrix}
                                                                          &\mspace{-1mu}  -D_2/\gamma_2
\\
0 & \mspace{-15mu} \gamma_{2}Z(\mu,\! D_1,\! D_2) f_2'(\lambda_{Z}(D_2)) & \mspace{-1mu} 0
\end{bmatrix}.  
\end{multline}
The eigenvalues of $J\bigl(E_{2}(\mu, D_1, D_2)\bigr)$ satisfy
\begin{equation*}
x^3 +a_{1}x^2 + a_{2}x +a_{3} = 0,
\end{equation*}
where 
\begin{align}  
 a_{1}(\mu, D_1, D_2) &= Z(\mu, D_1, D_2) f_2'\bigl(\lambda_{Z}(D_2)\bigr)+\lambda_{Z}(D_2)f_1'\bigl(N(\mu, D_1, D_2)\bigr) \notag
\\  &\quad \; -\gamma_{1}f_{1}(N(\mu, D_1, D_2))+D_{1}+D,       \label{JE2coefficienta1}      
\\
 a_{2}(\mu, D_1, D_2) &= \lambda_Z(D_2) Z(\mu, D_1, D_2) f_1'\bigl(N(\mu, D_1, D_2)\bigr) f_2'(\lambda_{Z}(D_2))  \notag
\\ 
&\quad \;+ D_2Z(\mu, D_1, D_2) f_2'\bigl(\lambda_{Z}(D_2)\bigr)+ D Z(\mu, D_1, D_2) f_2'\bigl(\lambda_{Z}(D_2)\bigr)  \notag
\\
&\quad \quad + D_{1}\lambda_{Z}(D_2)f_1'\bigl(N(\mu, D_1, D_2)\bigr) - 
                   D \gamma_{1}f_{1}\bigl(N(\mu, D_1, D_2)\bigr) + D D_{1}, \label{JE2coefficienta2}
\intertext{and}
a_{3}(\mu, D_1, D_2) 
   &=D_{2}Z(\mu, D_1, D_2) f_2'\bigl(\lambda_{Z}(D_2)\bigr)\Bigl(D+\lambda_{Z}(D_2) f_1'\bigl(N(\mu, D_1, D_2)\bigr) \Bigr).\label{JE2coefficienta3}
\end{align}
\begin{theorem}\label{coexistence_equilibrium_1}
The coexistence equilibrium point
\begin{equation*}
  E_2(\mu, D_1, D_2) = E_2\bigl(N(\mu, D_1, D_2),\lambda_{Z}(D_2),Z(\mu, D_1, D_2)\bigr)  
\end{equation*}
is asymptotically stable if and only if $a_1 > 0$ and $ a_1 a_2 > a_3$.
\end{theorem}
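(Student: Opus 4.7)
The plan is to apply the Routh--Hurwitz stability criterion for cubic polynomials, which states that the roots of $x^3 + a_1 x^2 + a_2 x + a_3$ all have negative real parts if and only if $a_1 > 0$, $a_3 > 0$, and $a_1 a_2 > a_3$. Since the eigenvalues of $J\bigl(E_2(\mu, D_1, D_2)\bigr)$ are precisely the roots of this characteristic polynomial with coefficients given by \eqref{JE2coefficienta1}--\eqref{JE2coefficienta3}, asymptotic stability of $E_2$ is equivalent to these three inequalities. To obtain the statement as written, which omits the condition $a_3 > 0$, the main observation is that $a_3 > 0$ is automatic whenever $E_2$ sits in the interior of the positive cone.

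To verify this, I would read off the explicit expression \eqref{JE2coefficienta3}:
\begin{equation*}
a_3(\mu, D_1, D_2) = D_2 Z(\mu, D_1, D_2) f_2'\bigl(\lambda_Z(D_2)\bigr)\Bigl(D + \lambda_Z(D_2) f_1'\bigl(N(\mu, D_1, D_2)\bigr)\Bigr).
\end{equation*}
Every factor is manifestly positive: $D_2, D > 0$ by the standing assumptions; $Z(\mu, D_1, D_2) > 0$ because $\mu > \mu_{c_1}(D_1, D_2)$ (this is exactly the existence condition established in Theorem~\ref{existence equilibria NPZ}, part~3); $\lambda_Z(D_2) > 0$ since $f_2(0) = 0 < D_2/\gamma_2$; and $f_1'(N(\mu, D_1, D_2)), f_2'(\lambda_Z(D_2)) > 0$ by the monotonicity hypotheses on $f_1$ and $f_2$. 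Therefore $a_3 > 0$ for every $\mu$ at which $E_2$ exists in the positive cone, and it drops out of the Routh--Hurwitz conditions.

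Having dispensed with $a_3 > 0$ automatically, asymptotic stability reduces to the two stated inequalities $a_1 > 0$ and $a_1 a_2 > a_3$, giving the theorem. There is no real obstacle in this argument beyond the bookkeeping verification that every factor in $a_3$ is positive; the substance of the theorem lies in the subsequent sections, where the signs of $a_1$ and $a_1 a_2 - a_3$ are analyzed as functions of $\mu$ in order to locate the Hopf bifurcation.
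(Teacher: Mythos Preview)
Your proposal is correct and takes essentially the same approach as the paper: the paper's entire proof is the single sentence ``Since $a_3$ is positive, this follows from the Routh-Hurwitz criterion,'' and your argument simply fills in the verification that each factor of $a_3$ in \eqref{JE2coefficienta3} is positive. The added detail is welcome but the strategy is identical.
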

\begin{proof}
Since $a_{3}$ is positive, this follows from the Routh-Hurwitz criterion. 
\end{proof}
We conclude this section with a significant strengthening of Lemma \ref{positivity_boundedness}.
We use the concept of uniform persistence introduced in  \cite{Butler1986}.
\begin{theorem}
 \label{uniform persistence}
 Assume that $\mu > \mu_{c_1}(D_1, D_2)$. 
Then system \eqref{NPZsys} is uniformly persistent with respect to all solutions satisfying $P_0 > 0$ and $Z_0 > 0$.
\end{theorem}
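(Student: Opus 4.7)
The plan is to invoke the Butler-Freedman-Waltman theorem on uniform persistence from \cite{Butler1986}. Its hypotheses require that the system be dissipative, that the flow on the boundary of the positive cone admit an acyclic covering by isolated invariant sets, and that each such boundary invariant set be a weak repeller relative to the interior. Dissipativity is immediate from Lemma \ref{positivity_boundedness}, since every positive solution enters the compact absorbing set defined by $U(t) \le D\mu/\widehat{D}$.

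Next I would analyze the boundary dynamics. The invariant coordinate faces of $\bR^3_{\geq 0}$ are $\{P = 0\}$ and $\{Z = 0\}$; the face $\{N = 0\}$ is not invariant because $dN/dt|_{N=0} = \mu D > 0$. On $\{P = 0\}$ the equations decouple to $N' = (\mu - N)D$ and $Z' = -D_2 Z$, so every orbit converges to $E_0 = (\mu, 0, 0)$. On $\{Z = 0\}$, since we assume $\mu > \mu_{c_1}(D_1, D_2) > \lambda_P(D_1)$, Theorem \ref{NPZ stability of E_1} shows that every orbit with $P_0 > 0$ converges to $E_1(\mu, D_1)$, while orbits with $P_0 = 0$ lie in the previous face and again converge to $E_0$. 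Consequently the boundary $\omega$-limit set is contained in $\{E_0\} \cup \{E_1(\mu, D_1)\}$, and both of these equilibria are isolated invariant sets of the full three-dimensional flow.

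To conclude I would verify the acyclic covering and weak repeller properties. A heteroclinic connection $E_0 \to E_1(\mu, D_1)$ exists within $\{Z = 0\}$ along the unstable manifold of $E_0$, but no return connection is possible on the boundary: inside $\{Z = 0\}$ every trajectory with $P > 0$ flows into $E_1$, and inside $\{P = 0\}$ the only equilibrium is $E_0$ itself, so the covering $\{E_0, E_1(\mu, D_1)\}$ is acyclic. The weak repeller property follows from Theorems \ref{NPZ stability E_0} and \ref{NPZ stability of E_1}: we have $m^+(E_0) = \{P = 0\}$ and $m^+(E_1(\mu, D_1)) = \{Z = 0\}$, both lying in $\partial \bR^3_+$, so neither invariant set attracts any interior trajectory. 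The main obstacle is ensuring that no additional invariant set lurks on the boundary beyond $E_0$ and $E_1(\mu, D_1)$; this is precisely what the global asymptotic stability results in Theorems \ref{NPZ stability E_0} and \ref{NPZ stability of E_1} deliver on the two invariant faces, after which the remaining verification is routine bookkeeping within the Butler-Freedman-Waltman framework.
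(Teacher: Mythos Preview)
Your proposal is correct and follows essentially the same route as the paper: both arguments rest on the global stability of $E_0$ and $E_1$ on their respective coordinate faces (Theorems~\ref{NPZ stability E_0} and~\ref{NPZ stability of E_1}), the observation that their stable manifolds lie entirely in the boundary, and an appeal to~\cite{Butler1986}. The only organizational difference is that the paper first proves $\liminf_{t\to\infty} N(t) > 0$ directly and then argues via the Butler--McGehee lemma that $E_0, E_1 \notin \omega(X(0))$, whereas you invoke the packaged acyclic-covering/weak-repeller criterion; the underlying mechanism is the same.
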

\begin{proof}
Recall from Lemma \ref{positivity_boundedness} that
all solutions of system \eqref{NPZsys} for which $P_0 > 0$ and $Z_0 > 0$ are positive and bounded. 

We first show that $\liminf_{t \rightarrow \infty}{N(t) > 0}$. If $\liminf_{t  \rightarrow \infty} N(t) = 0$ 
and $\limsup_{t \rightarrow \infty}N(t) = 0$, then $\lim_{t \rightarrow \infty} N(t) = 0$.  But 
this is impossible,
for then it follows from the $N$-equation that $N'(t) \rightarrow \mu D > 0$ as $t \rightarrow \infty$
and this, in turn, contradicts the fact that $N(t)$ is bounded.

Now, suppose  $\liminf_{t \rightarrow \infty}{N(t) = 0}$ while  $\limsup_{t \rightarrow \infty}{N(t) > 0}$. 
Then there exists a sequence  $\{ \tau_n  \}_{n=1}^\infty$ of local minima of $N(t)$ 
satisfying $\tau_n \rightarrow \infty$ as $n \rightarrow \infty$. Thus,
\begin{enumerate}
    \item $N'(\tau_n) = 0$, since $\tau_n$ is a local minimum, and 
    \item $N(\tau_n) \rightarrow 0$ as $n \rightarrow \infty$, since  $\liminf_{t \rightarrow \infty}{N(t) = 0}$.
\end{enumerate}
From the $N$-equation we have
\begin{equation*}
       N'(\tau_n)  = \mu D - \Bigl( N(\tau_n)D + P(\tau_n)f_1\bigl(N(\tau_n)\bigr) \Bigr), 
\end{equation*}
so that
\begin{equation*}
     0 =  \bigabs{N'(\tau_n)} \geq \bigabs{\mu D} - \Bigabs{ N(\tau_n)D + P(\tau_n)f_1\bigl(N(\tau_n)\bigr) }
\end{equation*}
Rearranging and using the facts that $N(\tau_n)\rightarrow0$ as $n\rightarrow\infty$, $f_1$ is continuous and $f_1(0)=0$, we get
\begin{equation*} 
   0 =  \lim_{\tau_n \rightarrow \infty}  \Bigabs{ N(\tau_n)D + P(\tau_n)f_1\bigl(N(\tau_n)\bigr) } \geq \mu D > 0,
\end{equation*}      
a contradiction. Hence, $\liminf_{t \rightarrow \infty}{N(t) > 0}$.

Choose $X(0) = \bigl(N_0, P_0,Z_0\bigr) \in \bR^3_+$. 
Then  $\omega \bigl(X(0) \bigr)$ is a nonempty, compact invariant set with respect to system \eqref{NPZsys}. 
We claim $E_0 = (\mu,0,0)$ and $E_1(\mu, D_1) = (\lambda_P(D_1),P(\mu, D_1),0)$ are not in $\omega(X(0))$. 
Suppose $E_0 =(\mu, 0,0) \in \omega \bigl(X(0) \bigr)$. Since $\mu > \mu_{c_1}(D_1,D_2)$, $E_0$ is an unstable hyperbolic equilibrium point. 
By theorem \ref{NPZ stability E_0},
$E_0$ is globally asymptotically stable with respect to solutions initiating in the plane $P=0$.
Since $X(0) \not \in m^+(E_0)$, $\{ E_0\} \neq \omega \bigl(X(0) \bigr)$. 
By the Butler-McGehee lemma (see lemma A1, \cite{ButlerMcGeheelemma}), 
there exists $Q \in \bigl( m^+(E_0) \setminus \{ E_0\}\bigr)  \cap  \omega \bigl(X(0) \bigr)$, 
so that $\cl \mathcal{O}(Q) \subset \omega(X(0))$. 
For such an initial condition, the governing system is $\displaystyle{N'(t) = D\bigl(\mu -N(t)\bigr)}$, 
and $\displaystyle{Z'(t) = -D_2 Z(t)}$.
But  then $\mathcal{O}(Q)$ becomes unbounded  as $t \rightarrow - \infty$. 
This is a contradiction to the compactness of $\omega \bigl(X(0) \bigr)$,
and so $E_0 \not \in \omega \bigl(X(0) \bigr)$.

Suppose $E_1(\mu, D_1) = (\lambda_P(D_1), P(\mu, D_1),0)$ is in $\omega(X(0))$. 
Since $\mu > \mu_{c_1}(D_1,D_2)$, $E_1(\mu, D_1)$ is an unstable hyperbolic equilibrium point. 
By theorem \ref{NPZ stability of E_1} 
$E_1(\mu, D_1)$ is globally asymptotically stable with respect to solutions  initiating in the plane $Z = 0$. 
Since $X(0) \not \in m^+\bigl(E_1(\mu, D_1)\bigr)$, $\{ E_1(\mu, D_1)\} \neq \omega \bigl(X(0) \bigr)$. 
By the Butler-McGehee lemma, there exists 
$\widehat{Q} \in \bigl( m^+\bigl(E_1(\mu, D_1)\bigr) \setminus \{ E_1(\mu, D_1)\}\bigr)  \cap  \omega \bigl(X(0) \bigr)$,
so that 
$\cl \mathcal{O}(\widehat{Q}) \subset \omega(X(0))$. 
If $\widehat{Q} \in m^-(E_0)$, then 
$E_0 \in \cl{\mathcal{O}(\widehat{Q})} \subset \omega(X(0))$, a contradiction. 
Thus $\widehat{Q}  \not \in m^-(E_0)$,  and this implies  $\mathcal {O}(\widehat{Q})$ is unbounded as $t \rightarrow -\infty$, 
contradicting the compactness of $\omega(X(0))$. Therefore, $E_1(\mu, D_1) \not \in \omega(X(0))$. 

Suppose the system \eqref{NPZsys} is not persistent. 
Then there exists $\widetilde{Q} \in \omega(X(0))$ such that $\widetilde{Q} \in m^+(E_0)$ or $\widetilde{Q} \in m^+\bigl(E_1(\mu, D_1)\bigr)$. 
Then $\cl {\mathcal{O}(\widetilde{Q})} \subset \omega(X(0))$,
 which implies either  $E_0 \in \omega(X(0))$ or $E_1(\mu, D_1)  \in \omega(X(0))$, neither of which can be true. 
Thus,  $\liminf_{t \rightarrow \infty}P(t) > 0$ and   $\liminf_{t \rightarrow \infty}Z(t) > 0$, 
and it follows from the main result of \cite{Butler1986}  that system \eqref{NPZsys} is uniformly persistent.
\end{proof}
\section{Stability of the coexistence equilibrium}
\label{Coexistence_equilibrium_stability}
In this section we study the  coexistence equilibrium point
\begin{equation*}
 E_2(\mu, D_1, D_2) = \bigl(N(\mu, D_1, D_2), \lambda_Z(D_2), Z(\mu, D_1, D_2)\bigr)
\end{equation*}
first when $D{=}D_1{=}D_2$ and $\mu$ varies and then after relaxing the assumption $D{=}D_1{=}D_2$.
In particular, we study the eigenvalues of the Jacobians at the coexistence equilibria as $\mu$ increases.
To prepare for the study of the evolution of the equilibrium $E_2(\mu, D_1, D_2)$, we observe
the following consequence of the implicit function theorem
\cite[p.122]{LangAnalysisII}. 
\begin{lemma}\label{NZsmoothness}
For $D_1> 0$, $D_2>0$, and $\mu>\mu_{c_1}(D_1,D_2)$ there is a local parameterization of the locus of coexistence equilibria 
\begin{equation*}
 E_2(\mu, D_1, D_2) = \bigl(N(\mu, D_1, D_2), \lambda_Z(D_2), Z(\mu, D_1, D_2)\bigr) 
\end{equation*}
defined on an interval containing $\mu$ and a disc around $(D_1, D_2)$ 
that is smooth to the smaller of the  degree of smoothness of $f_1(N)$ and the
degree of smoothness of $f_2(P)$.
\end{lemma}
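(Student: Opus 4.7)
The plan is to apply the implicit function theorem to the system \eqref{1steqnwithZ_2notzero}--\eqref{2ndeqnwithZ_2notzero} that defines the coexistence equilibrium. The middle coordinate $\lambda_Z(D_2)$ has already been handled by Lemma \ref{lambda_diff}: it is smooth in $D_2$ to the degree of smoothness of $f_2$. So the task reduces to producing $N(\mu,D_1,D_2)$ and $Z(\mu,D_1,D_2)$ as smooth functions of the parameters in a neighborhood of any given coexistence equilibrium.

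First I would repackage \eqref{1steqnwithZ_2notzero}--\eqref{2ndeqnwithZ_2notzero} as a smooth map
\begin{equation*}
F(N,Z,\mu,D_1,D_2) =
\begin{pmatrix}
(\mu - N)D - \lambda_Z(D_2)\,f_1(N) \\
\gamma_1 \lambda_Z(D_2)\,f_1(N) - D_1 \lambda_Z(D_2) - Z\,f_2\bigl(\lambda_Z(D_2)\bigr)
\end{pmatrix}.
\end{equation*}
A coexistence equilibrium corresponds to a zero of $F$. I would then compute the partial Jacobian with respect to the unknowns $(N,Z)$, which is the lower-triangular matrix
\begin{equation*}
\frac{\partial F}{\partial (N,Z)} =
\begin{bmatrix}
-D - \lambda_Z(D_2) f_1'(N) & 0 \\
\gamma_1 \lambda_Z(D_2) f_1'(N) & -f_2\bigl(\lambda_Z(D_2)\bigr)
\end{bmatrix}.
\end{equation*}
Its determinant is $\bigl(D + \lambda_Z(D_2) f_1'(N)\bigr) f_2\bigl(\lambda_Z(D_2)\bigr)$. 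Using $\lambda_Z(D_2)>0$, $f_1'>0$, $D>0$, and $f_2(\lambda_Z(D_2)) = D_2/\gamma_2 > 0$ from \eqref{breakeven}, this determinant is strictly positive, so the Jacobian is nonsingular at the equilibrium.

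The implicit function theorem \cite[p.122]{LangAnalysisII} then yields a local neighborhood of $(\mu,D_1,D_2)$ on which $N$ and $Z$ are uniquely determined smooth functions of the parameters. Tracking the smoothness: the entries of $F$ are built from $\lambda_Z$ (smooth to the degree of $f_2$ by Lemma \ref{lambda_diff}), $f_1$, and polynomial expressions in the parameters, so $F$ is itself of class $C^k$ where $k$ is the lesser of the smoothness indices of $f_1$ and $f_2$. The implicit function theorem then delivers $N$ and $Z$ with that same degree of smoothness. Combining with the already-established smoothness of the middle coordinate $\lambda_Z(D_2)$, the full parameterization $E_2(\mu,D_1,D_2)$ has the stated regularity.

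The only real obstacle is verifying the Jacobian condition, and that is immediate from positivity. The rest is simply bookkeeping of the minimum of the two smoothness classes and an observation that the local parameterization produced at a fixed base point can be taken over an interval in $\mu$ and a disc in $(D_1,D_2)$, as asserted.
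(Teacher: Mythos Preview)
Your proof is correct and follows essentially the same route as the paper: both apply the implicit function theorem to the pair of equations defining $N$ and $Z$, verify invertibility of the $(N,Z)$-Jacobian via its lower-triangular structure and the positivity of $D$, $\lambda_Z(D_2)$, $f_1'$, and $f_2(\lambda_Z(D_2))=D_2/\gamma_2$, and then observe that the dependence on $f_2$ enters through $\lambda_Z(D_2)$ so that the resulting smoothness is the minimum of that of $f_1$ and $f_2$. The only cosmetic difference is that the paper substitutes $D_2/\gamma_2$ for $f_2(\lambda_Z(D_2))$ in the second equation before differentiating, whereas you do so afterward.
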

\begin{proof}
Set $P=\lambda_Z(D_2)$ in the $N$- and $P$- equations of system \eqref{NPZsys}.
With $f_2(\lambda_Z(D_2)) = D_2/\gamma_2$ from \eqref{breakeven}, let  
\begin{equation*}
   G_1(N, Z, \mu, D_1, D_2) = D(\mu - N ) - f_1(N)\lambda_Z(D_2)
\end{equation*}
and 
\begin{equation*}
  G_2(N, Z, \mu, D_1, D_2)  = \gamma_1f_1(N)\lambda_Z(D_2) - D_1\lambda_Z(D_2) - (D_2/\gamma_2) Z,
\end{equation*}
and define $G \colon \bR^5 \rightarrow \bR^2 $ by $G(N, Z, \mu, D_1, D_2) =
\bigl(G_1(N,Z,\mu, D_1, D_2), G_2(N,Z,\mu, D_1, D_2)\bigr)$.
Then we want to parametrize the set $G^{-1}(0,0)$.  

The derivative of $G$ is represented by the matrix
\begin{equation*}
  DG = 
  \begin{bmatrix}
    \partial G_1/\partial N & \partial G_1/\partial Z & \partial G_1 /\partial \mu 
                                                & \partial G_1/\partial D_1 & \partial G_1/\partial D_2
\\
   \partial G_2/\partial N & \partial G_2/\partial Z & \partial  G_2 / \partial \mu 
                                                & \partial G_2/\partial D_1 & \partial G_2/\partial D_2
  \end{bmatrix}.
\end{equation*}
Observe that, for fixed $\mu_0>\mu_{c_1}(D_1,D_2)$,  the first two columns of $DG$ at the point 
\begin{equation*}
(N(\mu_0, D_1, D_2), Z(\mu_0,D_1, D_2), \mu_0, D_1, D_2)  
\end{equation*}
evaluate to
 \begin{equation*}
  \begin{bmatrix}
    -D - f'_1\bigl(N(\mu_0, D_1, D_2)\bigr)\lambda_Z(D_2)  & 0  
\\
    \gamma_1 f_1'\bigl(N(\mu_0, D_1, D_2)\bigr) \lambda_Z(D_2)  & -D_2/\gamma_2 
  \end{bmatrix}.
\end{equation*}
Since $f_1'(N) > 0$, the first two columns are linearly independent  and so the implicit function
theorem applies.  There exists a ball  $B_1$ around $(\mu_0, D_1, D_2)$ and a ball $B_2$ around 
$\bigl(N(\mu_0, D_1, D_2), Z(\mu_0, D_1, D_2)\bigr)$ such that
for each $(\mu, D_1, D_2)$ in $B_1$ there is a unique point $\bigl(N(\mu, D_1, D_2), Z(\mu, D_1, D_2)\bigr)$ in $B_2$ such that 
$G\bigl( N(\mu, D_1, D_2) , Z(\mu, D_1, D_2), \mu \bigr) =0$.  
Moreover, the functions $N(\mu,D_1, D_2)$ and $Z(\mu,D_1, D_2)$ have the same degree of
differentiability as does $G$, which is the minimum of the degrees of differentiability of $f_1(N)$ and $f_2(P)$.  
To explain how the differentiability of $f_2$ enters, 
note that the computation of $\partial N/\partial D_2$ and $\partial Z/\partial D_2$ via $\partial G/\partial D_2$ involves 
$\lambda_Z'(D_2) = \bigl( \gamma_2\cdot f_2'(\lambda_P(D_2))\bigr)^{-1}$ by \eqref{lambdaderivs}.
This all gives us a smooth parameterization of the equilibrium locus, as desired.
\end{proof}
\begin{remark}
  From the point of view of calculus,  each of the independent variables $\mu$, $D_1$, $D_2$ is on an equal footing with 
the others.  However, viewed through the lens of the structure of system \eqref{NPZsys}, we can describe the variable $\mu$ 
as a control parameter and the variables $D_1$ and $D_2$ as experimental parameters fixed at some earlier time. 
This distinction informs our analysis where we first consider the model when $D{=}D_1{=}D_2$ as $\mu$ varies,
and subsequently vary $D_1$ and $D_2$ in a neighborhood of $D$.
\end{remark}
\begin{theorem} \label{boundedness of Z_2}
Fix $D_1$ and $D_2$ and suppose $\mu > \mu_{c_1}(D_1,D_2)$, so that the coexistence equilibrium point 
$ E_2(\mu, D_1, D_2) = \bigl(N(\mu, D_1, D_2), \lambda_Z(D_2), Z(\mu, D_1, D_2)\bigr) $ exists. 
Then
 \begin{enumerate}
     \item the $\mu$-derivatives $N'(\mu, D_1, D_2)$ and $Z'(\mu, D_1, D_2)$ are positive.
     \item $\lim_{\mu \rightarrow \infty} N(\mu, D_1, D_2) = \infty$ and  $Z(\mu, D_1, D_2)$ is bounded.
 \end{enumerate}
\end{theorem}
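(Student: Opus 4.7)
My plan is to exploit the two defining equations \eqref{1steqnwithZ_2notzero} and \eqref{2ndeqnwithZ_2notzero} for the coexistence equilibrium, together with the boundedness of $f_1$, to get everything by implicit differentiation and a simple rearrangement. By Lemma \ref{NZsmoothness}, $N(\mu, D_1, D_2)$ and $Z(\mu, D_1, D_2)$ are smooth in $\mu$ on the interval $\mu > \mu_{c_1}(D_1, D_2)$, so differentiation is justified. Throughout, $D_1$ and $D_2$ are held fixed, so I will suppress them and write $N(\mu)$ and $Z(\mu)$.

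For part (1), I first differentiate \eqref{1steqnwithZ_2notzero} implicitly in $\mu$ to obtain
\begin{equation*}
  D - D\,N'(\mu) - \lambda_Z(D_2)\, f_1'(N(\mu))\, N'(\mu) = 0,
\end{equation*}
which gives the closed-form expression
\begin{equation*}
  N'(\mu) = \frac{D}{D + \lambda_Z(D_2)\, f_1'(N(\mu))} > 0,
\end{equation*}
since $f_1' > 0$. Next, from \eqref{2ndeqnwithZ_2notzero} together with $f_2(\lambda_Z(D_2)) = D_2/\gamma_2$ I recover the explicit formula \eqref{valueZ_2}, namely $Z(\mu) = (\gamma_2/D_2)\lambda_Z(D_2)\bigl(\gamma_1 f_1(N(\mu)) - D_1\bigr)$. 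Differentiating in $\mu$,
\begin{equation*}
  Z'(\mu) = \frac{\gamma_1\gamma_2}{D_2}\lambda_Z(D_2)\, f_1'(N(\mu))\, N'(\mu),
\end{equation*}
which is positive since each factor is positive.

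For part (2), I solve \eqref{1steqnwithZ_2notzero} for $N$ to obtain
\begin{equation*}
  N(\mu) = \mu - \frac{\lambda_Z(D_2)\, f_1(N(\mu))}{D}.
\end{equation*}
Let $M = \sup_{N \ge 0} f_1(N) < \infty$ by the standing boundedness hypothesis on $f_1$. Then $N(\mu) \ge \mu - \lambda_Z(D_2) M / D$, which forces $N(\mu) \to \infty$ as $\mu \to \infty$. Finally, plugging the bound $f_1(N(\mu)) \le M$ into the explicit formula for $Z(\mu)$ gives
\begin{equation*}
  Z(\mu) \le \frac{\gamma_2 \lambda_Z(D_2)}{D_2}\bigl(\gamma_1 M - D_1\bigr),
\end{equation*}
which is a $\mu$-independent bound, completing the proof.

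There is really no hard step here: once Lemma \ref{NZsmoothness} guarantees smoothness, the whole theorem is essentially a consequence of the fact that \eqref{2ndeqnwithZ_2notzero} determines $Z$ as a linear increasing function of $f_1(N)$, and that $f_1$ is monotone bounded. The only mildly subtle point is that the monotonicity of $N$ in $\mu$, which one might try to read off directly from the rearranged equation, is really cleanest to obtain from implicit differentiation — so I would present it that way to keep parts (1) and (2) tightly linked.
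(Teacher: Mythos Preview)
Your proof is correct and follows essentially the same approach as the paper: implicit differentiation of \eqref{1steqnwithZ_2notzero} and \eqref{2ndeqnwithZ_2notzero} for part (1), and the rearrangement $N(\mu)=\mu-\lambda_Z(D_2)f_1(N(\mu))/D$ together with boundedness of $f_1$ for part (2). The only cosmetic difference is that you write out the closed-form expression for $N'(\mu)$ and name the bound $M=\sup f_1$ explicitly, which the paper leaves implicit.
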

\begin{proof}  Since $D_1$ and $D_2$ are fixed throughout this proof, we drop
  these symbols and write simply $N(\mu)$ and $Z(\mu)$.
To show $N'(\mu) > 0$,
recall  equation \eqref{1steqnwithZ_2notzero} from theorem \ref{existence equilibria NPZ}:
\begin{equation*}
    0 = \bigl(\mu - N(\mu) \bigr)D - \lambda_Z(D_2) \cdot f_1\bigl(N(\mu) \bigr).
\end{equation*}
 Differentiating \eqref{1steqnwithZ_2notzero} with respect to $\mu$ and rearranging,  we get
\begin{equation}
    N'(\mu) \cdot \Bigl( D + \lambda_Z(D_2) f_1'\bigl(N(\mu)\bigr) \Bigr) = D.
\end{equation}
Since $f_1'(N)$ is positive, it follows that  $N'(\mu)$ is positive.

To show  $Z'(\mu) > 0$, 
set  $f_2\bigl(\lambda_Z(D_2)\bigr) = D_2/\gamma_2$ in equation \eqref{2ndeqnwithZ_2notzero}, obtaining
\begin{equation*}
    0 = \gamma_1 \lambda_Z(D_2) f_1\bigl(N(\mu) \bigr) -D_1\lambda_Z(D_2) - Z(\mu) \cdot (D_2/\gamma_2).
\end{equation*}
Differentiating  with respect to $\mu$ and rearranging, we get
\begin{equation*}
   Z'(\mu) = (\gamma_1 \gamma_2/D_2) \cdot \lambda_Z(D_2)  \cdot f_1'\bigl(N(\mu)\bigr)\cdot N'(\mu).
\end{equation*}
Since $N'(\mu)$  and $f_1'(N)$ are positive, it follows that   $Z'(\mu)$ is positive. This proves part one.

To prove part two, note that equation  \eqref{1steqnwithZ_2notzero} implies
\begin{equation*}
 N(\mu)  = \mu - (\lambda_Z(D_2)/D) \cdot f_1\bigl(N(\mu) \bigr).
\end{equation*}
Since $f_1(N)$ is bounded, $\lim_{\mu \rightarrow \infty} N(\mu) = \infty$.
From \eqref{2ndeqnwithZ_2notzero} we have
\begin{equation*}
     Z(\mu) =(\gamma_2/D_2)\cdot \bigl( \gamma_1 \lambda_Z(D_2) f_1\bigl(N(\mu) \bigr) -D_1\lambda_Z(D_2) \bigr) .
\end{equation*}
Since $f_1(N)$ is bounded,  $Z(\mu)$ is bounded. 
\end{proof}

Before we turn to a study of the stability properties of the coexistence equilibrium,
 we include a partial paraphrase of the $C^L$ Hopf bifurcation theorem as stated in \cite[p.16]{Hassard81}.
Since our goal is to make an application of this result to the coexistence equilibrium $E_2$,
and because the verification of the hypotheses is lengthy, 
we refer to our paraphrase to keep track of progress.
\begin{theorem} \label{HassardHopf}
  Consider a system $dX/dt = F(X,\mu)$ with $X \in \bR^n$ and $\mu$ a real parameter. If,
  \begin{enumerate}
  \item  for $\mu$ in an open interval containing $\mu_c$ (characterized in 3 below), 
$F(0,\mu) =0$  and $0 \in \bR^n$ is an isolated equilibrium point of $dX/dt{=}F(X,\mu)$;
\item all partial derivatives of the components $F^{\ell}$ of the vector $F$ of orders $\leq L{+}2$, ($L \geq 2$) exist
and are continuous in $X$ and $\mu$ in a neighborhood of $(0,\mu_c)$ in $\bR^n{\times}\bR$;
\item the Jacobian $J(0, \mu)= D_XF(0, \mu)$ has a pair of complex eigenvalues 
$\alpha(\mu){\pm} i\, \omega(\mu)$, where $\alpha(\mu_c) = 0$ and $\alpha'(\mu_c) \neq 0$;
\item the remaining $n{-}2$ eigenvalues of $J(0,\mu_c)$ have strictly negative real parts,
  \end{enumerate}
then the system $dX/dt = F(X, \mu)$ has a family of periodic solutions. 
\end{theorem}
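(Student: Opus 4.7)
The plan is to prove Theorem~\ref{HassardHopf} by the standard two-step strategy: first reduce the $n$-dimensional system to a two-dimensional one on a parameter-dependent center manifold, and then carry out a planar Hopf argument on the reduced system. The spectral splitting of $J(0,\mu_c)$ that is provided by hypotheses~(3) and~(4) is precisely what the center manifold theorem needs in order to supply a smooth invariant surface tangent at the origin to the two-dimensional real generalized eigenspace of $\pm i\omega(\mu_c)$.

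First I would extend the system by appending $\dot{\mu}=0$, obtaining a system on $\bR^n{\times}\bR$ for which the origin is an equilibrium and the linearization has a three-dimensional center subspace (the two imaginary eigenvalues, plus the zero eigenvalue coming from the parameter). The center manifold theorem then produces a $C^L$ invariant manifold $W^c\subset\bR^n{\times}\bR$; restricting to each slice $\mu{=}\text{const}$ near $\mu_c$ yields a two-dimensional parameter-dependent manifold $W^c(\mu)$ onto which all small bounded solutions are attracted. The $C^{L+2}$ regularity demanded in hypothesis~(2) is what guarantees that the reduced vector field on $W^c(\mu)$ is at least $C^L$ with $L\geq 2$.

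Next I would put the reduced planar system into Poincar\'e normal form. Choose coordinates on $W^c(\mu)$ so the linear part equals
\begin{equation*}
  \begin{bmatrix} \alpha(\mu) & -\omega(\mu) \\ \omega(\mu) & \alpha(\mu) \end{bmatrix},
\end{equation*}
and set $z=u+iv$. Near-identity polynomial changes of coordinates eliminate all non-resonant quadratic and cubic terms, producing
\begin{equation*}
  \dot z = \bigl(\alpha(\mu)+i\omega(\mu)\bigr)z + c_1(\mu)\,z|z|^2 + O\bigl(|z|^5\bigr).
\end{equation*}
In polar form $z=re^{i\theta}$ this reads $\dot r = \alpha(\mu)r + a(\mu)r^3 + O(r^5)$ and $\dot\theta = \omega(\mu)+O(r^2)$, with $a(\mu)=\operatorname{Re} c_1(\mu)$. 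Setting the leading part of $\dot r/r$ to zero gives $r^2\approx -\alpha(\mu)/a(\mu)$; since $\alpha(\mu_c)=0$ and $\alpha'(\mu_c)\neq 0$ by hypothesis~(3), the implicit function theorem solves this relation for $\mu$ as a smooth function of the amplitude $r$ with $\mu\to\mu_c$ as $r\to 0^+$. These fixed points of the truncated radial dynamics lift to genuine periodic orbits of the full planar vector field by a Poincar\'e return-map argument, and pulling back through the center manifold reduction produces the advertised family of periodic solutions of $\dot X=F(X,\mu)$.

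The main obstacle will be the bookkeeping of smoothness through the reductions. Center manifolds are in general only finitely differentiable, there is a well-known loss-of-derivatives phenomenon, and they are not unique; nonetheless one must argue that the cubic normal form coefficient $a(\mu)$, and hence the family of periodic orbits, is independent of the choices made. This is precisely what forces the hypothesis to ask for $L{+}2$ derivatives rather than $L$. A secondary technical point is upgrading the fixed points of the truncated radial map to true periodic orbits of the full system, which requires one more application of the implicit function theorem to the period map; hypothesis~(3) again guarantees the relevant linearization is invertible, and hypothesis~(1) guarantees the branch does not collide with other equilibria along the way.
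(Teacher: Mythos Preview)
The paper does not prove Theorem~\ref{HassardHopf} at all: it is explicitly introduced as ``a partial paraphrase of the $C^L$ Hopf bifurcation theorem as stated in \cite[p.16]{Hassard81},'' and the remark immediately following it notes that the authors are only interested in quoting the existence conclusion in order to apply it to the coexistence equilibrium $E_2$. There is therefore no proof in the paper against which to compare your attempt.

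That said, your sketch follows exactly the standard route taken in \cite{Hassard81} and \cite{MarsdenMcCracken76}: suspend the parameter via $\dot\mu=0$, invoke a $C^L$ center manifold for the resulting three-dimensional center subspace, pass to Poincar\'e normal form on the two-dimensional slices, and parametrize periodic orbits by amplitude using the transversality condition $\alpha'(\mu_c)\neq 0$ in the implicit function theorem applied to the return map. One small caution: your intermediate display $r^2\approx -\alpha(\mu)/a(\mu)$ tacitly suggests $a(\mu_c)\neq 0$, which the theorem as stated does not assume; however, since you correctly solve for $\mu$ as a function of $r$ (rather than the reverse), the argument actually only uses $\alpha'(\mu_c)\neq 0$ and goes through regardless of the sign or vanishing of the first Lyapunov coefficient. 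The nondegeneracy of $a(\mu_c)$ is needed only for the further conclusions about direction and stability that the paper explicitly declines to quote.
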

\begin{remark}
  For the purposes of the proof in \cite{Hassard81} the authors assume the critical value of
the bifurcation parameter is $\mu_c = 0$, which is a trivial alteration. 
We are only interested in the 
existence  of cycles, so we do not quote several additional conclusions offered in \cite{Hassard81}.
In our situation 
the equilibrium $E_2$ depends on the parameter $\mu$, an issue which we circumvent 
in section \ref{Cycles}  using the inverse function theorem. Hypothesis 2
is fulfilled by imposing more differentiability conditions on the functions $f_1(N)$ and $f_2(P)$
at an appropriate point in the exposition.  
Verification of hypotheses 3 and 4 in the statement of theorem \ref{HassardHopf} is the most
involved part of our process and occupies the rest of this section. 
\end{remark}
To begin the stability analysis, we conjugate the Jacobian $J(E_2)$ in \eqref{JacobianE2} by %
\begin{equation*}
    W = \begin{bmatrix}
    1  & \gamma_1^{-1} &  (\gamma_1 \gamma_2)^{-1}
    \\
    0 & 1 & 0
    \\
    0 & 0  &  1
    \end{bmatrix}.
\end{equation*}
Using $f_2\bigl(\lambda_Z(D_2)\bigr) = D_2/\gamma_2$ and writing 
$D_1 = D{+}\epsilon_1$, $D_2 = D{+}\epsilon_2$ yields the matrix
\begin{equation*}
    WJ(E_2)W^{-1} =
    \\  \\
    \begin{bmatrix}
    -D   
         &   -\gamma_1^{-1}\epsilon_1     
              & -(\gamma_1 \gamma_2)^{-1}\epsilon_2
              \\
    \gamma_1 \lambda_Z(D_2) f_1'\bigl(N(\mu, D_1, D_2)\bigr)
    & A
    & B
    \\
    0  & C & 0
    \end{bmatrix},
\end{equation*}
where
\begin{align*}  
  A &= \gamma_1 f_1 \bigl(N(\mu, D_1, D_2)\bigr) 
            -  \lambda_Z(D_2) f_1' \bigl(N(\mu, D_1, D_2)\bigr)
                  - D_1 
                  - Z(\mu, D_1, D_2) f_2'\bigl(\lambda_Z(D_2)\bigr), 
\\
\begin{split}  B &=-( \lambda_Z(D_2) /\gamma_2 )  f_1'\bigl(N(\mu, D_1, D_2)\bigr) -  f_2\bigl(\lambda_Z(D_2)\bigr) 
\\
              &= - \bigl( \lambda_Z(D_2) f_1'\bigl(N(\mu, D_1, D_2)\bigr) + D_2 \bigr)/\gamma_2<0,    \end{split}
\\
   C &=\gamma_2 Z(\mu, D_1, D_2)f'_2\bigl(\lambda_Z(D_2)\bigr)>0. 
 \end{align*}
 First we make the assumption that  $D{=}D_1{=}D_2$, so that $\epsilon_1{=}\epsilon_2{=}0$; 
that is, we assume the death rates of $P$ and $Z$ are negligible with respect
to washout rate $D$. The results of
\cite{Kuang1999} suggest this is a useful initial assumption.  
Since we regard $D$ as fixed for the discussion, we will abbreviate $N(\mu, D, D)$ by
$N(\mu)$  and $Z(\mu, D, D)$ by $Z(\mu)$. Similarly, we will abbreviate
$E_2(\mu, D, D)$ by $E_2(\mu)$.

We can explicitly compute the eigenvalues of $J\bigl(E_2(\mu)\bigr)$, since conjugation does not change them.
The characteristic polynomial of $J\bigl(E_2(\mu)\bigr)$ is
\begin{equation}   \label{cpJE2DD}
    p(x) = (-D - x) ( -BC -Ax + x^2)
\end{equation}
where
\begin{align}
  A &= \gamma_1 f_1 \bigl(N(\mu)\bigr) -  \lambda_Z(D) f_1' \bigl(N(\mu)\bigr)- D - Z(\mu) f_2'\bigl(\lambda_Z(D)\bigr), \label{Aspecial}
\\
  B &= - \bigl( \lambda_Z f_1'\bigl(N(\mu)\bigr) + D \bigr)/\gamma_2,   \label{Bspecial}
\intertext{and}
 C &= \gamma_2 Z(\mu)f'_2\bigl(\lambda_Z(D)\bigr).   \label{Cspecial}
\end{align}
The next result amplifies theorem \ref{coexistence_equilibrium_1} in the case $D{=}D_1{=}D_2$.
\begin{theorem} \label{E^*_2}
Assume $D{=}D_{1}{=}D_{2}$ and that $\mu > \mu_{c_1}(D,D)$, so the coexistence equilibrium 
$E_2(\mu) = (N(\mu), \lambda_Z(D), Z(\mu))$ exists. Then
\begin{enumerate}
\item  $E_2(\mu)$ is  locally asymptotically stable  if 
\begin{equation*}
    Z(\mu) \Bigl(\frac{D}{\gamma_{2}\lambda_{Z}(D)}-f_{2}'\bigl(\lambda_{Z}(D)\bigr)\Bigr) < \lambda_{Z}(D) f_{1}'\bigl(N(\mu)\bigr).
\end{equation*}
\item $E_2(\mu)$ is  unstable if 
\begin{equation*}
    Z(\mu) \Bigl(\frac{D}{\gamma_{2}\lambda_{Z}(D)}-f_{2}'\bigl(\lambda_{Z}(D)\bigr) \Bigr) > \lambda_{Z}(D) f'_1\bigl(N(\mu)\bigr).
\end{equation*}
\end{enumerate}
\end{theorem}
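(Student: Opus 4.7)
The plan is to read the stability condition directly off the factorization \eqref{cpJE2DD} of the characteristic polynomial, $p(x) = (-D-x)(x^2 - Ax - BC)$. One eigenvalue is already exhibited as $-D < 0$, so the local stability of $E_2(\mu)$ is determined entirely by the two roots of the quadratic $q(x) = x^2 - Ax - BC$.

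To analyze $q$, I would first use \eqref{Bspecial} and \eqref{Cspecial}: since $f_1'$ and $f_2'$ are strictly positive on their domains and $Z(\mu) > 0$ by the hypothesis $\mu > \mu_{c_1}(D,D)$ together with \eqref{valueZ_2}, the formulas yield $B < 0$ and $C > 0$. Hence the constant term $-BC$ of $q$ is strictly positive, so the product of the two roots is positive. Consequently the roots are either both real of the same sign or a complex conjugate pair, and in either case their real parts share the sign of their sum $A$. Therefore $E_2(\mu)$ is asymptotically stable iff $A < 0$ and unstable iff $A > 0$, and the problem reduces to translating the sign of $A$ into the inequalities appearing in the theorem.

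For this last step I would rewrite $A$ from \eqref{Aspecial} using the equilibrium identity. Setting $D_1 = D_2 = D$ in \eqref{2ndeqnwithZ_2notzero} and invoking $f_2(\lambda_Z(D)) = D/\gamma_2$ from \eqref{breakeven} yields $\gamma_1 f_1\bigl(N(\mu)\bigr) = D + Z(\mu)\,D/(\gamma_2 \lambda_Z(D))$. Substituting into \eqref{Aspecial} the two $\pm D$ terms cancel, and what remains is
\begin{equation*}
A = Z(\mu)\!\left(\frac{D}{\gamma_2 \lambda_Z(D)} - f_2'\bigl(\lambda_Z(D)\bigr)\right) - \lambda_Z(D)\,f_1'\bigl(N(\mu)\bigr),
\end{equation*}
so $A<0$ and $A>0$ are exactly the inequalities stated in parts (1) and (2). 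I do not foresee any substantive obstacle: the argument is entirely algebraic once the characteristic polynomial is factored, and the only subtlety worth flagging is the strict positivity of $Z(\mu)$, which is what keeps $-BC > 0$ strict and thus ensures the sign of $A$ alone governs stability.
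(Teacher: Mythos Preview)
Your proposal is correct and follows essentially the same approach as the paper. The paper likewise rewrites $A$ via the equilibrium identity $\gamma_1 f_1\bigl(N(\mu)\bigr) - D = DZ(\mu)/\bigl(\gamma_2\lambda_Z(D)\bigr)$ and then concludes by invoking the Routh-Hurwitz criterion together with $-BC>0$; your Vieta-based argument for the quadratic factor is just an explicit unpacking of that criterion in degree two.
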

\begin{proof}
With $f_2\bigl(\lambda_Z(D)\bigr) = D/\gamma_2$ in \eqref{2ndeqnwithZ_2notzero} we have 
$\gamma_1f_1\bigl(N(\mu)\bigr)-D = \bigl(DZ(\mu)\bigr)/\bigl(\gamma_2\lambda_Z(D)\bigr)$. Then 
\begin{align} 
    A&=  \gamma_{1} f_{1} (N(\mu)) - D -Z(\mu)f_{2}'\bigl(\lambda_Z(D)\bigr) - \lambda_Z(D) f_{1}'(N(\mu)) \notag \\
&= Z(\mu) \Bigl(\frac{D}{\gamma_{2}\lambda_Z(D)}-f_{2}'\bigl(\lambda_Z(D)\bigr) \Bigr) -\lambda_Z(D) f_{1}'(N(\mu)).\label{A} 
\end{align}
The result now follows from the Routh-Hurwitz criterion, since $-BC>0$ is easily verified 
from formulas \eqref{Bspecial} and \eqref{Cspecial}. 
\end{proof}
From the factorization of the characteristic polynomial of $J\bigl(E_2(\mu)\bigr)$ given in \eqref{cpJE2DD} it
is immediate that the Jacobian at $E_2(\mu)$ has one negative eigenvalue.
Thus,  hypothesis 4 of
theorem \ref{HassardHopf} for $E_2(\mu)$ is satisfied in the case $D{=}D_1{=}D_2$.  Now we verify hypothesis 3 
for this situation. 
\begin{theorem} \label{stability_special}
Assume $D{=}D_{1}{=}D_{2}$ and let $\mu > \mu_{c_1}(D,D)$, so that the coexistence equilibrium 
$E_2(\mu)= \bigl(N(\mu), \lambda_Z, Z(\mu)\bigr)$ exists. 
  \begin{enumerate}
     \item If $f'_1(N)$ is continuous and $D/\bigl(\gamma_2 \lambda_Z(D)\bigr)>f_2'\bigl(\lambda_Z(D)\bigr)$, 
then there exists a value $\mu_{c_2} > \mu_{c_1}(D,D) $ for which $A(\mu_{c_2}) = 0$.
Consequently, when $\mu = \mu_{c_2}$, the Jacobian has a conjugate pair of imaginary eigenvalues.
     \item 
If, in addition,  $f_1$ is twice differentiable with respect to $N$ and $f_1^{(2)} \bigl(N(\mu_{c_2})\bigr)< 0$, 
then $A'(\mu_{c_2}) > 0$. 
Combining this with part 1, we have that hypothesis
3 of theorem \ref{HassardHopf} is satisfied for $E_2(\mu)$.
\item If $f_1^{(2)}(N) < 0$ for all $N$, then $\mu_{c_2}$ is unique.
\end{enumerate}
\end{theorem}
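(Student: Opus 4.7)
The proof rests on the explicit formula \eqref{A} for $A(\mu)$ and the factorization \eqref{cpJE2DD} of the characteristic polynomial. Set $K = D/(\gamma_2\lambda_Z(D)) - f_2'(\lambda_Z(D))$, which is positive by hypothesis of part~1. Then
\[
A(\mu) = Z(\mu)\, K - \lambda_Z(D)\, f_1'\bigl(N(\mu)\bigr).
\]

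\textbf{Part 1.} The plan is to evaluate $A$ at the two extremes and apply the intermediate value theorem. As $\mu \searrow \mu_{c_1}(D,D)$, equation \eqref{valueZ_2} gives $Z(\mu) \to 0$ and $N(\mu) \to \lambda_P(D)$, so $A(\mu) \to -\lambda_Z(D)\, f_1'\bigl(\lambda_P(D)\bigr) < 0$. As $\mu \to \infty$, Theorem~\ref{boundedness of Z_2} gives $N(\mu) \to \infty$; substituting into \eqref{valueZ_2} yields $Z(\mu) \to Z_\infty := (\gamma_2/D)\,\lambda_Z(D)\bigl(\gamma_1 \lim_{N\to\infty} f_1(N) - D\bigr)$, which is strictly positive by the standing assumption $\lim f_1 > D/\gamma_1$. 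Since $f_1$ is bounded with $f_1'>0$, the improper integral $\int_0^\infty f_1'(N)\,dN$ converges, forcing $\liminf_{N\to\infty} f_1'(N) = 0$. Choosing a sequence $\mu_n \to \infty$ along which $f_1'\bigl(N(\mu_n)\bigr) \to 0$, we find $A(\mu_n) \to Z_\infty K > 0$. The intermediate value theorem then produces a $\mu_{c_2} > \mu_{c_1}(D,D)$ with $A(\mu_{c_2}) = 0$. At this value, the quadratic factor $x^2 - Ax - BC$ in \eqref{cpJE2DD} reduces to $x^2 - BC$, whose roots are $\pm i\sqrt{-BC}$---a conjugate imaginary pair, since $-BC > 0$ follows immediately from \eqref{Bspecial} and \eqref{Cspecial}.

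\textbf{Parts 2 and 3.} Differentiating the formula for $A(\mu)$ gives
\[
A'(\mu) = Z'(\mu)\, K - \lambda_Z(D)\, f_1^{(2)}\bigl(N(\mu)\bigr)\, N'(\mu).
\]
Theorem~\ref{boundedness of Z_2} furnishes $Z'(\mu) > 0$ and $N'(\mu) > 0$, and $K > 0$ remains in force. The hypothesis $f_1^{(2)}\bigl(N(\mu_{c_2})\bigr) < 0$ makes the second summand positive as well, so $A'(\mu_{c_2}) > 0$. For $\mu$ near $\mu_{c_2}$, the discriminant $A(\mu)^2 + 4BC$ is negative (since $A$ is small and $BC < 0$), so the quadratic factor contributes complex conjugate eigenvalues $\alpha(\mu) \pm i\omega(\mu)$ with $\alpha(\mu) = A(\mu)/2$. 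Hence $\alpha(\mu_{c_2}) = 0$ and $\alpha'(\mu_{c_2}) = A'(\mu_{c_2})/2 > 0$, verifying hypothesis~3 of Theorem~\ref{HassardHopf}. For part~3, if $f_1^{(2)} < 0$ \emph{globally}, then the same formula shows $A'(\mu) > 0$ for all $\mu > \mu_{c_1}(D,D)$, so $A$ is strictly increasing and its zero is unique.

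The main obstacle is the limit argument in part~1: without monotonicity or further structure on $f_1'$ itself, I cannot conclude $f_1'(N) \to 0$ as $N \to \infty$, only that the liminf vanishes. Fortunately, since all I need is a single point where $A$ is positive, passing to a subsequence along which $f_1'\bigl(N(\mu_n)\bigr) \to 0$ suffices. This is the one place where the boundedness hypothesis on $f_1$ is used in an essential way beyond its role in Theorem~\ref{boundedness of Z_2}.
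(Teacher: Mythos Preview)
Your proof is correct and follows essentially the same approach as the paper: both evaluate $A$ near $\mu_{c_1}$ (where it is negative) and for large $\mu$ (where it is positive) and invoke the intermediate value theorem, then differentiate \eqref{A} directly for parts 2 and 3. The one substantive difference is that the paper simply asserts $\lim_{N\to\infty} f_1'(N)=0$ from boundedness and monotonicity of $f_1$, whereas you correctly observe that only $\liminf_{N\to\infty} f_1'(N)=0$ follows in general and pass to a subsequence; your version is the more careful one, and it costs nothing since a single value with $A>0$ is all that is needed.
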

\begin{remark}
  If we assume $f^{(2)}_2(P)  < 0$, i.e., that $f_2$ is is concave down, then
the  slope of the secant that passes through the points $(0,0)$
and $\bigl(\lambda_Z(D), (D/\gamma_{2})\bigr)$ is greater than 
the slope of the tangent line to the graph of $f_2$ at $\lambda_{Z}(D)$; 
that is,  $D/\bigl(\gamma_{2}\lambda_Z(D)\bigr) > f_{2}'\bigl(\lambda_{Z}(D)\bigr)$. 
This will  be the case, for example, when $f_2(P)$ is Holling Type II. 
However,  the one-point condition  
$D/\bigl(\gamma_{2}\lambda_Z(D)\bigr)-f_{2}'\bigl(\lambda_Z(D)\bigr) > 0$ 
may hold even if $f_2$ is not concave down. 
We will give such an example in Section \ref{Examples}.
\end{remark}
\begin{proof}
Consider the expression 
\begin{equation*}
  A(\mu)  = Z(\mu) \Bigl(\frac{D}{\gamma_{2}\lambda_Z(D)}-f_{2}'\bigl(\lambda_Z(D)\bigr) \Bigr) -\lambda_Z(D) f_{1}'\bigl(N(\mu)\bigr)
\end{equation*}
given in \eqref{A}.
Since $f_1'(N)$ is a continuous function, $A$ is a continuous function of $\mu$ by Lemma~\ref{NZsmoothness}. 
To prove that $A(\mu)$ has a zero value for some $\mu>\mu_{c_1}(D,D)$, it is enough to prove that $A(\mu)$ passes from negative to positive.
For $\mu =\mu_{c_1}(D,D)$, $Z\bigl(\mu_{c_1}(D,D)\bigr) =0$, so 
\begin{equation*}
A\bigl(\mu_{c_1}(D,D)\bigr) = -\lambda_Z(D) f_{1}'\bigl( N\bigl(\mu_{c_1}(D, D)\bigr) \bigr) < 0.   
\end{equation*}

To find a value of $\mu>\mu_{c_1}(D, D)$ at which $A(\mu)$ is positive, we use Theorem \ref{boundedness of Z_2}.  
We have that  $Z(\mu)$ is increasing and bounded for $\mu > \mu_{c_1}(D, D)$. 
Let ${Z_{\infty} = \sup_{\mu \geq \mu_{c_1}(D, D)}Z(\mu)}$. 
Then there exists an $M_1$ such that for $\mu > M_1$, $Z(\mu) > Z_{\infty}/2$. 
Since $f_1$ is bounded and increasing, $\lim_{N \to +\infty} f'_1(N) = 0$. 
Then, for any $\epsilon>0$, there exists an $N_{\epsilon}>0$ such that $0<f_1'(N) < \epsilon$ for all $N>N_{\epsilon}$.
With $\epsilon = \bigl(Z_{\infty}/2\lambda_Z(D)\bigr)\cdot\Bigl( D/\bigl(\gamma_2 \lambda_Z(D)\bigr) - f'_2\bigl(\lambda_Z(D)\bigr)\Bigr)$, 
this implies that there exists an $N^*$ such that, if $N > N^*$, then 
\begin{equation*}
 0 < f'_1(N) < \bigl(Z_{\infty}/2\lambda_Z(D)\bigr) \cdot\Bigl( D/\bigl(\gamma_2 \lambda_Z(D)\bigr) - f'_2\bigl(\lambda_Z(D)\bigr)\Bigr).
\end{equation*}
In addition, $N(\mu)$ is increasing without bound by theorem \ref{boundedness of Z_2}, 
so there is an $M_2 > \mu_{c_1}(D, D)$ such that, if $\mu > M_2$, then $N(\mu) > N^*$. 
Choose $\mu^* > \max\{ M_1,M_2\}$. Then
\begin{multline*}
A(\mu^*) = Z(\mu^*)\cdot \Bigl(D/\bigl(\gamma_{2}\lambda_Z(D)\bigr)-f_{2}'\bigl(\lambda_Z(D)\bigr) \Bigr) - \lambda_Z(D) f_{1}'\bigl( N(\mu^*) \bigr) 
\\
>   (Z_{\infty}/2)\cdot \Bigl( D/\bigl(\gamma_2 \lambda_Z(D)\bigr) - f'_2\bigl(\lambda_Z(D)\bigr)\Bigr)
\\ 
   -   \lambda_Z(D) \cdot \bigl(Z_{\infty}/2\lambda_Z(D)\bigr) \cdot \Bigl( D/\bigl(\gamma_2 \lambda_Z(D)\bigr) - f'_2\bigl(\lambda_Z(D)\bigr)\Bigr)
 = 0.
\end{multline*}
Since $A\bigl(\mu_{c_1}(D, D)\bigr) < 0$ and  $A(\mu^*) > 0$,
there is a number $\mu_{c_2} > \mu_{c_1}(D, D)$ such that $A(\mu_{c_2}) = 0$. 
Note that when $\mu = \mu_{c_2}$ 
the discriminant of the quadratic factor of the characteristic polynomial in \eqref{cpJE2DD} is
\begin{equation} \label{discriminantspecial}
  A(\mu_{c_2})^2 + 4\, B(\mu_{c_2})\cdot C(\mu_{c_2})
              = -4\,\bigl(\lambda_Z(D) f_1'\bigl(N(\mu_{c_2})\bigr)+D \bigr)\cdot \bigl(f_2'\bigl(\lambda_Z(D)\bigr) Z(\mu_{c_2})\bigr) < 0,
\end{equation}
so its roots are indeed purely imaginary. 
This proves part one.

For part two, by continuity of the discriminant as a function of $\mu$, there is a neighborhood of $\mu_{c_2}$ on which the discriminant
is negative. 
Continuing, differentiate $A(\mu)$ with respect to $\mu$ to obtain
 \begin{equation} \label{A'} 
 A'(\mu) = Z'(\mu) \Bigl( D/\bigl(\gamma_2 \lambda_Z(D)\bigr) - f_2'\bigl(\lambda_Z(D)\bigr) \Bigr) 
                 -\lambda_Z(D) f_1^{(2)}\bigl( N(\mu)\bigr)\cdot N'(\mu). 
\end{equation}
By theorem \ref{boundedness of Z_2}, $N'(\mu)$ and $Z'(\mu)$ are positive.  
By the hypotheses of the present theorem,  $D/\bigl(\gamma_2 \lambda_Z(D)\bigr) -  f_2'\bigl(\lambda_Z(D)\bigr) > 0$ 
and $f_1^{(2)}(N(\mu_{c_2})) < 0$. Thus,  $A'(\mu_{c_2}) > 0$.
Combining parts one and two means that hypothesis 3 of Theorem \ref{HassardHopf} holds at $\mu_{c_2}$
for the case $D{=}D_1{=}D_2$.

For part three, if $f_1^{(2)}(N) < 0$ for all $N$, then $A'(\mu) > 0$ for $\mu > \mu_{c_1}(D, D)$.
\end{proof}

Let us now discuss weakening the condition $D{=}D_1{=}D_2$. 
Intuitively, for $(D_1, D_2)$ sufficiently close
to $(D,D)$ the eigenvalues of $J\bigl(E_2(\mu, D_1, D_2)\bigr)$ should exhibit behavior
similar to those of $J\bigl(E_2(\mu, D, D)\bigr)$.
In particular, the equilibrium $E_2(\mu, D_1, D_2)$ should exhibit a similar loss  of stability. 
To make these considerations precise, we first require lemma \ref{product_decomposition}.  
\begin{lemma}
  \label{product_decomposition}
Let $P_1(x)^- = \{ (\alpha - x) \mid \alpha < 0\}$ be the space of polynomials of degree 1 in $x$, with leading coefficient $-1$ and negative 
constant term,  
let $P_2(x)^- = \{ \beta - \gamma x + x^2 \mid \gamma^2 - 4\beta < 0\}$ be the space of monic quadratic polynomials in $x$ with real coefficients
and having a complex conjugate pair of roots,
and let $P_3(x)^- = \{ p_0 + p_1x + p_2x^2 - x^3 \}$ be the space of cubic polynomials in $x$ with leading coefficient~$-1$ and real coefficients.

Then the multiplication map $M \colon P_1^- \times P_2^- \rightarrow P_3^-$ is locally a diffeomorphism.  
\end{lemma}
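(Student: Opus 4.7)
The plan is to invoke the inverse function theorem. Both $P_1^-\times P_2^-$ and $P_3^-$ are open subsets of three-dimensional affine spaces (of dimensions $1{+}2$ and $3$), and $M$ is manifestly smooth (in fact polynomial), so it suffices to show that the derivative $DM$ has nonzero determinant at every point of $P_1^-\times P_2^-$.

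First I would introduce explicit coordinates, parameterizing $P_1^-$ by $\alpha<0$, $P_2^-$ by pairs $(\beta,\gamma)$ satisfying $\gamma^2-4\beta<0$, and $P_3^-$ by its coefficient triple $(p_0,p_1,p_2)$. Expanding $(\alpha-x)(\beta-\gamma x+x^2)$ presents $M$ explicitly as a polynomial map $\bR^3\to\bR^3$, from which I would read off the $3{\times}3$ Jacobian matrix and compute its determinant. The step that actually requires thought is to recognize that this determinant equals, up to sign, the value of the quadratic factor $\beta-\gamma x+x^2$ at $x=\alpha$---equivalently, the resultant of the two polynomial factors $(\alpha-x)$ and $(\beta-\gamma x+x^2)$.

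Once the determinant has been put in this form, the nonvanishing on $P_1^-\times P_2^-$ is automatic: the quadratic factor has negative discriminant and positive leading coefficient, hence is strictly positive on all of $\bR$, and in particular its value at the real number $\alpha$ is nonzero. Thus $\det DM$ does not vanish anywhere on $P_1^-\times P_2^-$, and the inverse function theorem (the same reference used in Lemma \ref{NZsmoothness}) completes the proof. The main obstacle is bookkeeping rather than conceptual---keeping signs straight in the $3{\times}3$ determinant and recognizing the resulting cubic-in-$(\alpha,\beta,\gamma)$ expression as $q(\alpha)$ for $q$ the quadratic factor. Conceptually, this is an instance of the general principle that the multiplication map on polynomials is a local diffeomorphism exactly at coprime factorizations, and coprimality is guaranteed here because the linear factor has its only root at the real number $\alpha$ while the quadratic factor has only non-real complex roots.
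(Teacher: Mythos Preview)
Your proposal is correct and follows essentially the same approach as the paper: introduce coordinates $(\alpha,\beta,\gamma)$ and $(p_0,p_1,p_2)$, compute the Jacobian of the polynomial map $M$, and show its determinant $-(\beta-\alpha\gamma+\alpha^2)$ is nonzero because $\gamma^2-4\beta<0$ forces the quadratic $x^2-\gamma x+\beta$ to have no real root. Your framing of the determinant as $\pm q(\alpha)$ for $q$ the quadratic factor (equivalently, as a resultant witnessing coprimality) is a nice conceptual gloss the paper does not make explicit, but the underlying computation and the appeal to the inverse function theorem are identical.
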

\begin{proof}
  Identify $P_3^-$ with Euclidean space using $p_0$, $p_1$,  and $p_2$ as coordinates, identify $P_1^-$ with an open subset of 
$\bR$ using $\alpha$ as the coordinate, and identify $P_2^-$ with an open subset of the plane $\bR^2$ using $\beta$ and $\gamma$ 
as coordinates. Then the map $M$ has the expression 
\begin{equation*}
  M(\alpha, \beta, \gamma) = \bigl(p_0(\alpha, \beta, \gamma), p_1(\alpha, \beta, \gamma), p_2(\alpha, \beta, \gamma)\bigr)
                           = \bigl(\alpha \beta, (-\alpha \gamma  -\beta),( \alpha + \gamma)\bigr).
\end{equation*}
The derivative, or Jacobian, of $M$ at $(\alpha, \beta, \gamma)$ is
\begin{equation*}
  DM = 
  \begin{bmatrix}
    \beta & \alpha & 0 
\\
    -\gamma & -1 & -\alpha 
\\
   1 & 0 & 1
  \end{bmatrix},
\end{equation*}
which fails to be invertible if and only if 
\begin{equation*}
  \det(DM) =-( \beta - \alpha \gamma + \alpha^2) = 0.
\end{equation*}
Should this occur, then $\alpha = \bigl(\gamma \pm \sqrt{\gamma^2 - 4\beta}\bigr)/2$.  
But we assume $\alpha < 0$ is real and $\gamma^2 - 4\beta < 0$, so $\det(DM) = 0$ is
impossible.  The map $M$ is smooth, so, by the inverse function theorem \cite[p.125]{LangAnalysisII}, 
it is locally a diffeomorphism.
\end{proof}
To explain how lemma \ref{product_decomposition} comes into play, consider the map 
$\chi \colon M_{3,3}(\bR) \rightarrow P_3^-$  which takes as input a real-valued $3$ by $3$ matrix $R$
and produces its characteristic polynomial 
$\chi(R) = \det(R - xI)$.
The map $\chi$  has a coordinate expression by taking the coefficients in degrees $0$, $1$, and~$2$.
These coefficients are polynomials in the matrix entries, so $\chi$ is smooth. Now look at
\begin{equation*}
 M_{3,3}(\bR) \stackrel{\chi}{\longrightarrow}  P_3^-    \stackrel{M}{\longleftarrow}  P_1^- \times P_2^-.
\end{equation*}
Suppose we are in the situation of  theorem \ref{stability_special}, 
where it is easy to factor the characteristic polynomial of
$J\bigl(E_2(\mu, D, D)\bigr)$, and we have seen the Jacobian 
 has a negative real eigenvalue and a complex conjugate pair of eigenvalues
as $\mu$ varies near a potential bifurcation value $\mu_{c_2}$. 
The factorization explicitly inverts the polynomial multiplication $M$ at  particular points, and
lemma \ref{product_decomposition} enables us to extend the factorization, in principle, 
to the characteristic polynomial of $J\bigl(E_2(\mu, D_1, D_2)\bigr)$.

In particular, we can understand how 
the characteristic polynomial of $J\bigl(E_2(\mu, D_1, D_2)\bigr)$ behaves as $\mu$ varies
when $(D_1,D_2)$ is close to $(D,D)$ (in the Euclidean norm, for definiteness).
We remind the reader that we think of $\mu$ as a control parameter, adjustable by
the experimenter, and $D_1$ and $D_2$ as experimental parameters, set at the beginning
of an experiment.
To bring out this distinction, we will write the components of the formal factorization of 
the characteristic polynomial of $J\bigl( E_2(\mu, D_1, D_2)\bigr)$ as
$\alpha(D_1, D_2)(\mu)$, $\beta(D_1, D_2)(\mu)$, and $\gamma(D_1, D_2)(\mu)$.
\begin{lemma}
  \label{uniformapproximation}
Assume $f_1$ is three times continuously differentiable and $f_2$ is two times continuously differentiable. Then
there exists a $\mu$-interval $[\mu{-}\hat{\delta}, \mu{+}\hat{\delta}]$ on which the $\mu$-derivative $\gamma'(D_1,D_2)(\mu)$ is uniformly approximated by
$\gamma'(D,D)(\mu) = A'(\mu)$.  In fact, there exists a constant $C$ such that 
\begin{equation*}
   \abs{ \gamma'(D_1, D_2)( \mu) - \gamma'(D,D)(\mu)} \leq C \cdot \Dist{(D_1, D_2)}{(D, D)}
\end{equation*}
for any $\mu \in [\mu_{c_2}{-}\hat{\delta}, \mu_{c_2}{+}\hat{\delta}]$
\end{lemma}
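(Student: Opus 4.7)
The plan is to realize $\gamma(D_1,D_2)(\mu)$ as the image of a composition of three smooth maps, then differentiate once in $\mu$ and apply the mean value theorem in the $(D_1,D_2)$ variables on a compact neighborhood of $(D,D)$.

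First I would track smoothness through each stage. Under the hypothesis that $f_1 \in C^3$ and $f_2 \in C^2$, Lemma~\ref{NZsmoothness} yields that $N(\mu,D_1,D_2)$ and $Z(\mu,D_1,D_2)$ are jointly $C^2$ near $(\mu_{c_2},D,D)$; Lemma~\ref{lambda_diff} likewise gives $\lambda_Z \in C^2$. Substituting into \eqref{JacobianE2} shows the entries of $J\bigl(E_2(\mu,D_1,D_2)\bigr)$ are $C^2$ in $(\mu,D_1,D_2)$, since $f_1'(N)$ involves one more derivative of $f_1$. Next, the characteristic polynomial map $\chi \colon M_{3,3}(\bR)\to P_3^{-}$ is polynomial in matrix entries, so the coefficients of the characteristic polynomial are $C^2$ functions of $(\mu,D_1,D_2)$ on a neighborhood of $(\mu_{c_2},D,D)$.

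Next I would apply Lemma~\ref{product_decomposition}. At the base point $(\mu_{c_2},D,D)$ Theorem~\ref{stability_special} together with \eqref{cpJE2DD} and \eqref{discriminantspecial} tells us the characteristic polynomial factors as $(-D-x)\bigl(-B(\mu_{c_2})C(\mu_{c_2}) - A(\mu_{c_2})x + x^2\bigr)$ with $-D < 0$ and quadratic discriminant $A^2 + 4BC < 0$, so we land in the domain where $M$ is a local diffeomorphism. Composing $\chi$ with the local smooth inverse $M^{-1}$, on a product neighborhood $I \times U$ of $(\mu_{c_2},D,D)$ (which we shrink if necessary so the discriminant stays negative throughout) I obtain $C^2$ factor functions $\alpha(D_1,D_2)(\mu)$, $\beta(D_1,D_2)(\mu)$, $\gamma(D_1,D_2)(\mu)$ whose restriction to $(D_1,D_2)=(D,D)$ reproduces, by uniqueness of the factorization, the explicit formulas $\alpha(D,D)(\mu)=-D$, $\beta(D,D)(\mu)=-B(\mu)C(\mu)$, and $\gamma(D,D)(\mu)=A(\mu)$ from \eqref{Aspecial}--\eqref{Cspecial}.

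Finally I would extract the Lipschitz estimate. Since $\gamma$ is jointly $C^2$ on $I\times U$, the partial derivative $\gamma'(D_1,D_2)(\mu) = \partial\gamma/\partial\mu$ is jointly $C^1$. Shrinking if necessary to a compact product $[\mu_{c_2}-\hat{\delta},\mu_{c_2}+\hat{\delta}]\times \overline{V}$ with $\overline{V}\subset U$, the mixed partials $\partial^2\gamma/(\partial\mu\,\partial D_i)$ are continuous, hence uniformly bounded by some constant $C$. The fundamental theorem of calculus applied along the straight segment in the $(D_1,D_2)$-plane from $(D,D)$ to $(D_1,D_2)$ then produces
\begin{equation*}
 \bigabs{\gamma'(D_1,D_2)(\mu) - \gamma'(D,D)(\mu)} \leq C \cdot \Dist{(D_1,D_2)}{(D,D)}
\end{equation*}
uniformly in $\mu\in[\mu_{c_2}-\hat{\delta},\mu_{c_2}+\hat{\delta}]$, as required.

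I expect the main obstacle to be the careful bookkeeping of smoothness orders: one must check that the extra differentiation in $\mu$ needed to pass from $\gamma$ to $\gamma'$, together with the chain rule through $N(\mu,D_1,D_2)$, still leaves enough regularity to yield continuous mixed partials, which is exactly why $f_1$ is required to be $C^3$ rather than merely $C^2$. Once the smoothness accounting is in place, the inverse function theorem from Lemma~\ref{product_decomposition} and the compactness/mean-value argument are routine.
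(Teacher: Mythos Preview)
Your approach is correct in outline and considerably more streamlined than the paper's, but the smoothness bookkeeping contains a genuine slip that needs repair. You assert that the entries of $J\bigl(E_2(\mu,D_1,D_2)\bigr)$ are jointly $C^2$, and hence that $\gamma$ is jointly $C^2$. This is false under the stated hypotheses: the entries $Z(\mu,D_1,D_2)f_2'\bigl(\lambda_Z(D_2)\bigr)$ involve the composition $f_2'\circ\lambda_Z$, which is only $C^1$ in $D_2$ when $f_2\in C^2$ (a second $D_2$-derivative would call for $f_2'''$). Consequently the characteristic-polynomial coefficients $p_i$, and therefore $\gamma$, are not jointly $C^2$ on $I\times U$, and your inference ``$\gamma\in C^2$ implies $\partial\gamma/\partial\mu\in C^1$'' does not go through as stated.

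The fix is easy once you notice the asymmetry: the factor $f_2'\bigl(\lambda_Z(D_2)\bigr)$ is independent of $\mu$. So when you differentiate the $p_i$ first in $\mu$, this factor sits inert; differentiating the result once in $D_2$ then requires only $f_2''$, which you have. Thus the mixed partials $\partial^2 p_i/(\partial\mu\,\partial D_j)$ (and hence $\partial^2\gamma/(\partial\mu\,\partial D_j)$, pushing through the $C^\infty$ map $M^{-1}$) do exist and are continuous, even though full joint $C^2$-regularity fails. That is exactly what your mean-value argument on $\gamma'$ needs, and it also explains why $f_2\in C^2$ (not merely $C^1$) is required, a point your closing paragraph attributes only to the $f_1$ side. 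With this correction your argument goes through.

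By contrast, the paper's proof in Section~\ref{Appendix} avoids any claim of joint smoothness of $\gamma$. It writes $\gamma'(D_1,D_2)(\mu)=\bigl\langle\nabla\gamma(p_0,p_1,p_2),(p_0',p_1',p_2')\bigr\rangle$ via the chain rule, splits the difference $\gamma'(D_1,D_2)(\mu)-\gamma'(D,D)(\mu)$ by the triangle inequality, and then bounds each piece by establishing Lipschitz estimates on the individual building blocks $\lambda_Z$, $f_1\circ N$, $f_1'\circ N$, $f_1''\circ N$, $Z$, $f_2'\circ\lambda_Z$, $N'$, $Z'$ (Proposition~\ref{constituentbounds}) and combining them via Lemma~\ref{elementary}. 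This is much longer but has the virtue that every derivative actually used is displayed, so the regularity hypotheses are manifestly sufficient; your abstract route is shorter but, as you anticipated, puts all the weight on getting the regularity accounting exactly right.
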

The details of the proof of lemma \ref{uniformapproximation} are relegated to section \ref{Appendix} so as not
to disturb the flow of the exposition. 
\begin{theorem} \label{stability_general}
Let $\mu > \mu_{c_1}(D_1, D_2)$, so that the equilibrium point 
$E_2(\mu, D_1, D_2)$
exists in the interior of the positive octant.
Assume  $f_1$ is three times continuously differentiable and $f_2$ is two times continuously differentiable
and that $D/\bigl(\gamma_2 \lambda_Z(D)\bigr) > f_2'\bigl(\lambda_Z(D)\bigr)$
so that the hypotheses of theorem \ref{stability_special} part 1 are satisfied for $E_2(\mu, D, D)$,
and let $\mu_{c_2}$ be as in theorem \ref{stability_special}.

For $(D_{1}, D_{2})$ sufficiently close to $(D,D)$,
\begin{enumerate}
\item 
hypothesis 4 of theorem \ref{HassardHopf} holds for $E_2(\mu, D_1, D_2)$;
\item
and if, in addition,  $f_1^{(2)}\bigl(N(\mu_{c_2}, D, D)\bigr) <  0$ so that the hypotheses of
theorem \ref{stability_special} part 2 are satisfied, 
then hypothesis 3 of theorem \ref{HassardHopf} holds for $E_2(\mu, D_1, D_2)$.
\end{enumerate}
\end{theorem}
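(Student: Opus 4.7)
The plan is to transport the bifurcation analysis at $(D_1, D_2) = (D, D)$ established in Theorem~\ref{stability_special} to nearby parameter values by combining the local factorization of cubic polynomials from Lemma~\ref{product_decomposition} with the uniform derivative estimate of Lemma~\ref{uniformapproximation}. At the base point $\mu = \mu_{c_2}$, $(D_1, D_2) = (D, D)$, the characteristic polynomial \eqref{cpJE2DD} factors as $(-D - x)\bigl(x^2 - A(\mu_{c_2})x - B(\mu_{c_2})C(\mu_{c_2})\bigr)$; since $A(\mu_{c_2}) = 0$ and $-B(\mu_{c_2})C(\mu_{c_2}) > 0$, this polynomial corresponds in the coordinates of Lemma~\ref{product_decomposition} to the triple $(\alpha, \beta, \gamma) = (-D, -B(\mu_{c_2})C(\mu_{c_2}), 0)$ with $\alpha < 0$ and $\gamma^2 - 4\beta < 0$ strictly, and in particular lies in the open set on which $M$ is a local diffeomorphism.

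Because the entries of $J\bigl(E_2(\mu, D_1, D_2)\bigr)$ depend smoothly on $(\mu, D_1, D_2)$ by Lemma~\ref{NZsmoothness}, so do the coefficients of its characteristic polynomial, so for $(\mu, D_1, D_2)$ in a neighborhood $V$ of $(\mu_{c_2}, D, D)$ the characteristic polynomial lies in the image of the local diffeomorphism. Composing with its inverse produces smooth functions $\alpha(D_1, D_2)(\mu)$, $\beta(D_1, D_2)(\mu)$, $\gamma(D_1, D_2)(\mu)$ on $V$ that realize the factorization, with $\alpha < 0$ and the quadratic factor retaining a complex conjugate pair of roots throughout. Since $\alpha(D, D)(\mu_{c_2}) = -D$ and $\alpha$ is continuous, $\alpha(D_1, D_2)(\mu) < 0$ on all of $V$ after possibly shrinking, so the factored linear root supplies the negative real eigenvalue required by hypothesis~4 of Theorem~\ref{HassardHopf}. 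This proves part~1.

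For part~2, the real part of the complex eigenvalue pair equals $\gamma(D_1, D_2)(\mu)/2$, so hypothesis~3 of Theorem~\ref{HassardHopf} amounts to producing a critical value $\mu_c(D_1, D_2)$ at which $\gamma = 0$ together with the transversality condition $\gamma' \neq 0$. At $(D, D)$ we have $\gamma(D, D)(\mu) = A(\mu)$, so Theorem~\ref{stability_special} already gives $\gamma(D, D)(\mu_{c_2}) = 0$ and, under the assumption $f_1^{(2)}\bigl(N(\mu_{c_2}, D, D)\bigr) < 0$, $\gamma'(D, D)(\mu_{c_2}) = A'(\mu_{c_2}) > 0$. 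I would first choose $\hat{\delta}$ from Lemma~\ref{uniformapproximation} small enough that $A'(\mu) \geq A'(\mu_{c_2})/2$ throughout $[\mu_{c_2} - \hat{\delta}, \mu_{c_2} + \hat{\delta}]$, and then require $(D_1, D_2)$ close enough to $(D, D)$ that the uniform bound $|\gamma'(D_1, D_2)(\mu) - A'(\mu)| \leq C \cdot \|(D_1, D_2) - (D, D)\|$ from Lemma~\ref{uniformapproximation} forces $\gamma'(D_1, D_2)(\mu) > A'(\mu_{c_2})/4 > 0$ on the entire interval. Continuity of $\gamma(D_1, D_2)(\mu_{c_2} \pm \hat{\delta})$ in $(D_1, D_2)$ ensures the endpoint values straddle zero, and the intermediate value theorem (equivalently, the implicit function theorem applied to $\gamma$ in the variable $\mu$) then yields a unique $\mu_c(D_1, D_2)$ in the interval satisfying $\gamma(D_1, D_2)(\mu_c) = 0$; strict positivity of $\gamma'(D_1, D_2)(\mu_c)$ delivers the required transversality.

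The principal obstacle is the uniform control of $\gamma'(D_1, D_2)(\mu)$ in $\mu$ as $(D_1, D_2)$ moves off the diagonal, which is precisely the content of Lemma~\ref{uniformapproximation}; without such a bound one could not rule out $\gamma'$ degenerating near $\mu_{c_2}$ for nearby parameter values. Once that estimate is in hand, the remainder of the proof is a clean application of the inverse function theorem (to factor the characteristic polynomial via Lemma~\ref{product_decomposition}) combined with the implicit function theorem (to relocate the critical value $\mu_c$), and the bookkeeping between the characteristic polynomial of $J\bigl(E_2(\mu, D_1, D_2)\bigr)$ and the factored form $(\alpha, \beta, \gamma)$ is routine once the right local coordinates are in place.
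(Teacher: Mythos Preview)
Your proposal is correct and follows essentially the same route as the paper: factor the characteristic polynomial via Lemma~\ref{product_decomposition}, read off the negative real root from the $P_1^-$-component for part~1, and for part~2 combine continuity of $\gamma(D_1,D_2)$ at the interval endpoints (for the sign change) with the uniform derivative estimate of Lemma~\ref{uniformapproximation} (for transversality). Your observation that membership in $P_1^-\times P_2^-$ automatically forces $\alpha<0$ and negative discriminant is in fact a slightly cleaner way to handle what the paper verifies separately; otherwise the arguments coincide.
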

\begin{remark}
Theorem \ref{stability_special} gives a condition, namely, $f_1^{(2)} < 0$, on system \eqref{NPZsys} under which
there is a unique number  $\mu_{c_2}$ at which $J\bigl(E_2(\mu, D, D)\bigr)$ has a purely imaginary 
pair of eigenvalues meeting the transverality condition.  
However, it is not {\em a priori} clear that, in general,  there is precisely one number at which these 
properties of the eigenvalues hold.  
Therefore, for the theorem and its proof, choose one such number $\mu_{c_2}$ and fix it throughout the discussion.
\end{remark}
\begin{proof}[Proof of theorem \ref{stability_general}.]
  We have proved in theorem \ref{stability_special} that there is an interval of parameters $\mu$
in which the characteristic polynomials
  \begin{equation*}
     p(x) = (-D -x) (-BC -Ax + x^2)
  \end{equation*}
 of $J\bigl(E_2(\mu, D, D)\bigr)$ have a complex conjugate pair of roots in addition to the 
eigenvalue $-D{<}0$, so they  are in the image of
the multiplication map $M$.  For $(D_1, D_2)$ close to $(D,D)$ the entries in $J\bigl(E_2(\mu, D_1, D_2)\bigr)$
are close to the entries in $J\bigl(E_2(\mu, D, D)\bigr)$, so the characteristic polynomial of $J\bigl(E_2(\mu, D_1, D_2)\bigr)$
is close to the characteristic polynomial of $J(E_2)(\mu, D, D)$.  
To see this explicitly, refer to the formulas 
\eqref{JE2coefficienta1}, \eqref{JE2coefficienta2}, and \eqref{JE2coefficienta3}; 
to account for the normalization of the characteristic polynomial
to leading coefficient $-1$ multiply each expression by~$-1$. 
Therefore, in view of lemma \ref{product_decomposition}, the characteristic polynomial
of $J\bigl(E_2(\mu, D_1, D_2)\bigr)$ has a decomposition of the same form as that of the characteristic polynomial of 
$J\bigl(E_2(\mu, D, D)\bigr)$.  
Written formally, the decomposition is 
\begin{equation*}
  M^{-1}\bigl(\chi \bigl( J\bigl(E_2(\mu, D_1, D_2)\bigr) \bigr) \bigr) = \bigl( \alpha(D_1, D_2)(\mu) - x , 
                             \beta(D_1, D_2)(\mu) - \bigl(\gamma(D_1, D_2)(\mu)\bigr)\, x + x^2 \bigr).
\end{equation*}
The map 
$M\colon P_1^-(x){\times}P_2^-(x) \rightarrow P_3^-(x)$ 
is infinitely differentiable, so the local inverse $M^{-1}$ is also.  
In particular, for a fixed value $\mu$, 
the coefficients of the decomposition are smooth functions of $(D_1,D_2)$ defined in a neighborhood of $(D,D)$. 

The first consequence is that, by definition of $P_1^-$, the characteristic 
polynomial of $J\bigl(E_2(\mu, D_1, D_2)\bigr)$ has a linear factor $\alpha(D_1, D_2)(\mu) - x$ with $\alpha(D_1, D_2)(\mu) < 0$. 
This shows that hypothesis 4 of theorem \ref{HassardHopf} can be satisfied. 

Now we start the verification of hypothesis 3 of theorem \ref{HassardHopf} for $E_2(\mu, D_1, D_2)$.
The discriminant of the quadratic factor of the characteristic polynomial of $J\bigl(E_2(\mu, D_1, D_2)\bigr)$
is 
\begin{equation*}
  \bigl(\gamma(D_1, D_2)(\mu)\bigr)^2 - 4\, \beta(D_1, D_2)(\mu).
\end{equation*}
At $\mu_{c_2}$ and for $(D_1, D_2)$ sufficiently close to $(D, D)$, 
this is close to the expression 
\begin{equation*}
   \bigl(\gamma(D, D)(\mu_{c_2})\bigr)^2 - 4\,\beta(D, D)(\mu_{c_2}) =  A(\mu_{c_2})^2 + 4\, B(\mu_{c_2})C(\mu_{c_2})
\end{equation*}
for the
discriminant of the quadratic factor of the characteristic polynomial of $J\bigl(E_2(\mu, D,D)\bigr)$
given in \eqref{discriminantspecial}, which is negative. 
Therefore, at $\mu_{c_2}$, the discriminant $\bigl(\gamma(D_1, D_2)(\mu)\bigr)^2 - 4\,\beta(D_1, D_2)(\mu)$ is also negative.
By continuity of the discriminant as a function of $\mu$, there is an interval $[\mu_{c_2}-\delta_0, \mu_{c_2}+\delta_0]$ on which
it is negative.  Therefore,  the characteristic polynomial of 
$J\bigl(E_2(\mu, D_1, D_2)\bigr)$  has a complex conjugate pair of roots on this interval. 

Under the assumptions that $f_1^{(2)}$ is continuous and $f_1^{(2)}\bigl(N(\mu_{c_2}, D, D)\bigr)< 0$, 
$A'(\mu)$ as calculated in 
\eqref{A'} is continuous and $A'(\mu_{c_2}) > 0$.
So there is a $\delta_1 > 0$ such that 
$A'(\mu) > A'(\mu_{c_2})/2$ on the interval $[\mu_{c_2}-\delta_1, \mu_{c_2}+\delta_1]$.
Choose $\delta= \min\{ \delta_0, \delta_1\}$.  
Put $\eta_1 = -A(\mu_{c_2}{-}\delta)/2>0$.
By continuity of $\gamma$ as a function of $(D_1, D_2)$, 
there is a $\rho_1 > 0$ such that $\Dist{(D_1,D_2)}{(D, D)} < \rho_1$ implies
\begin{align*}
\abs{ \gamma(D_1, D_2)(\mu_{c_2}{-}\delta) - \gamma(D, D)(\mu_{c_2}{-}\delta) } &< \eta_1.
\intertext{Remembering that $\gamma(D, D)(\mu) = A(\mu)$, we have}
  \abs{ \gamma(D_1, D_2)(\mu_{c_2}{-}\delta) - A(\mu_{c_2}{-}\delta) } &< \eta_1,
\\
   \gamma(D_1,D_2)(\mu_{c_2}{-}\delta) - A(\mu_{c_2}{-}\delta) &< -A(\mu_{c_2}{-}\delta)/2,
\\
   \gamma(D_1, D_2)(\mu_{c_2}{-}\delta) &< A(\mu_{c_2}{-}\delta)/2 < 0.
\end{align*}
Similarly, put $\eta_2 = A(\mu_{c_2}{+}\delta)/2>0$.
There is a $\rho_2 > 0$ such that $\Dist{(D_1,D_2)}{(D, D)} < \rho_2$ implies
\begin{align*}
\abs{ \gamma(D_1, D_2)(\mu_{c_2}{+}\delta) - \gamma(D, D)(\mu_{c_2}{+}\delta) } &< \eta_2,
\\
  \abs{ \gamma(D_1, D_2)(\mu_{c_2}{+}\delta) - A(\mu_{c_2}{+}\delta) } &< \eta_2,
\\
   -A(\mu_{c_2}{+}\delta)/2  &< \gamma(D_1, D_2)(\mu_{c_2}{+}\delta) - A(\mu_{c_2}{+}\delta) 
\\
   0 < A(\mu_{c_2}{+}\delta)/2 &< \gamma(D_1, D_2)(\mu_{c_2}{+}\delta).
\end{align*}
Now using the continuity of $\gamma(D_1, D_2)(\mu)$ as function of $\mu$ to combine these results, 
we find $\gamma(D_1, D_2)(\mu)$ has a zero in the interval $[\mu_{c_2}-\delta, \mu_{c_2}+\delta]$,
provided that $\Dist{(D_1,D_2)}{(D, D)} < \min\{ \rho_1, \rho_2 \}$. 

Combining the results of the previous two paragraphs, on the interval
$[\mu_{c_2} - \delta, \mu_{c_2} + \delta]$ and for $(D_1,D_2)$ sufficiently close to $(D,D)$,
the characteristic polynomial of $J\bigl(E_2(\mu, D_1, D_2)\bigr)$ has a complex conjugate pair of roots 
and at least one pair of purely imaginary roots. Moreover, by choice of $\delta$, $\gamma'(D, D)(\mu) = A'(\mu) > A'(\mu_{c_2})/2>0$ .

To see that  the transversality condition holds, 
let $\eta_3 = A'(\mu_{c_2})/2 > 0$.  By lemma \ref{uniformapproximation}, if $(D_1,D_2)$ is sufficiently close
to $(D,D)$,  on the interval 
$[\mu_{c_2}- \delta, \mu_{c_2}+ \delta]$ we have
\begin{align*}
  \abs{ \gamma'(D_1, D_2)(\mu) - \gamma'(D, D)(\mu)} &\leq \eta_3,
\\
 - A'(\mu_{c_2})/2 &<  \gamma'(D_1, D_2)(\mu) - \gamma'(D, D)(\mu),
\\
  \gamma'(D, D)(\mu)  - A'(\mu_{c_2})/2 &<  \gamma'(D_1, D_2)(\mu),
\\
  0  &< \gamma'(D_1, D_2)(\mu),
\end{align*}
since $\gamma'(D, D)(\mu) = A'(\mu) > A'(\mu_{c_2})/2$ 
on the interval $[\mu_{c_2}- \delta, \mu_{c_2}+ \delta]$, in particular, 
at the point where $\gamma(D_1, D_2)(\mu)$ has a zero.
This completes the proof that 
hypothesis 3 of theorem \ref{HassardHopf} holds for $E_2(\mu, D_1, D_2)$.
\end{proof}
\section{Bifurcation to cycles}
\label{Cycles}
In \cite{Kuang1999} it is shown that cycles exist for certain values of
parameters and under the assumption that $D{=}D_1{=}D_2$.  In a nutshell, the
assumption  implies that the $\omega$-limit set of a solution starting near the unstable equilibrium $E_2(\mu, D, D)$
is contained in a plane in $NPZ$-space.  Of course, this plane also contains $E_2$. 
  The authors observe that there is a two-dimensional attracting set for this system. 
The Poincar\'{e}-Bendixson theorem applies to the two-dimensional limit system, delivering 
the existence of a cycle for the $(N,P,Z)$-system. 

In this section we fix $(D_1, D_2)$ sufficiently close to $(D, D)$ so that
theorem \ref{stability_general} applies, and we want to apply the Hopf bifurcation theorem
\cite{Hassard81, MarsdenMcCracken76}, restated in theorem \ref{HassardHopf},
to deduce that the system~\eqref{NPZsys} undergoes a Hopf bifurcation as
$\mu$ passes a critical value.
In section \ref{Coexistence_equilibrium_stability} we studied the characteristic polynomial
of $J\bigl(E_2(\mu, D_1, D_2)\bigr)$ relative to the characteristic polynomial of $J\bigl(E_2(\mu, D, D)\bigr)$.  
Features of these polynomials continue to play a role.
 The results of section~\ref{Coexistence_equilibrium_stability}
show that for parameter values $(D_1, D_2)$ near $(D,D)$, 
hypotheses 3 and 4 of theorem \ref{HassardHopf} concerning the behavior of the eigenvalues of the 
Jacobian $J\bigl(E_2(\mu, D_1, D_2)\bigr)$ are satisfied.   
However, in system \eqref{NPZsys}, the coordinates of the equilibrium 
$E_2\bigl( N(\mu, D_1, D_2), \lambda_Z(D_2), Z(\mu, D_1, D_2)\bigr)$ are changing
with the parameter $\mu$, so hypothesis 1 of theorem \ref{HassardHopf} is not satisfied.  
The immediate goal of this section is to overcome this difficulty by using the inverse function theorem.
\begin{figure}[h!]
  \centering
 \fbox{ \includegraphics[width=0.5\linewidth,viewport=20 360 475 695,clip=true]{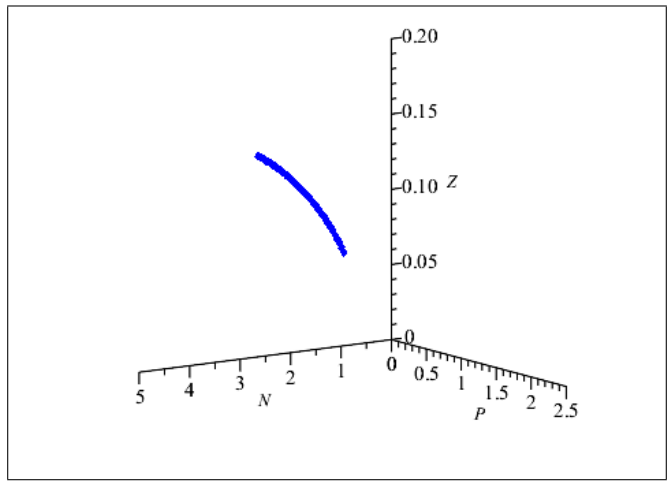} }
  \caption{A curve of coexistence equilibria}
  \label{coexisteqcurveh3}
\end{figure}
Augment  system \eqref{NPZsys} by introducing the bifurcation parameter as an extra
variable, giving 
\begin{equation} \label{aug_system} \begin{split}
  dN/dt &= D(\mu - N ) - f_1(N)P
\\
  dP/dt &= \gamma_1f_1(N)P - D_1P - f_2(P) Z
\\
  dZ/dt &= \gamma_2f_2(P)Z - D_2Z
\\
  d\mu/dt &= 0
\end{split}
\end{equation}
We are interested in the equilibria of system \eqref{aug_system} as $\mu$ varies in a small interval
around a number $\mu_{c_2}$ (depending on $D_1$ and $D_2$, but not necessarily uniquely determined)
 characterized in the proof of theorem \ref{stability_general} as a parameter value at which
the Jacobian $J\bigl(E(\mu_{c_2}, D_1, D_2)\bigr)$ has a  pair of purely imaginary eigenvalues,
crossing the imaginary axis in the complex plane transversally. 
By lemma \ref{NZsmoothness} the functions $N(\mu, D_1, D_2)$ and $Z(\mu, D_1, D_2)$ are smooth to
the same degree of smoothness possessed by $f_1(N)$ and $f_2(P)$. 

With $\delta$ as in the proof of theorem \ref{stability_general}, 
let $I$ be an interval containing $\mu_{c_2}$, contained in $[\mu_{c_2} - \delta, \mu_{c_2} + \delta]$, 
and supporting a curve
\begin{equation*}
h \colon I \rightarrow \bR^4,  \quad h(\mu) = \bigl(N(\mu, D_1, D_2), \lambda_Z(D_2), Z(\mu, D_1, D_2), \mu \bigr)
\end{equation*} 
parameterizing the equilibrium locus of the augmented system
\eqref{aug_system} near the critical point $h(\mu_{c_2})$.
We note that
\begin{equation*}
h'(\mu) = \bigl( N'(\mu, D_1, D_2), 0, Z'(\mu, D_1, D_2)\bigr), 1\bigr) \neq (0,0,0,0),
\end{equation*}
so $h$ is an immersion of an interval into $\bR^4$.

Consider next the map $H \colon \bR^3 \times I \rightarrow \bR^4$ defined by 
\begin{equation*}
  H(x,y,z, \mu) = (x,y,z, 0) + h(\mu)= (x+N(\mu, D_1, D_2), y+\lambda_Z(D_2), z + Z(\mu, D_1, D_2), \mu).
\end{equation*}
Observe that  $H(0,0,0, \mu) = \bigl(N(\mu,D_1,D_2),\lambda_Z(D_2), Z(\mu,D_1,D_2), \mu \bigr)$ and that
\begin{equation*}
  DH\bigl(0,0,0, \mu)\bigr) = 
  \begin{bmatrix}
    1 & 0 & 0 &   N'(\mu, D_1, D_2))
\\ 
    0 & 1 & 0 & 0
\\
    0 & 0 & 1 &  Z'(\mu, D_1, D_2))
\\
  0 & 0 & 0 & 1
  \end{bmatrix},
\end{equation*}
which is an invertible matrix for any $\mu$. 
In particular, on a neighborhood $U$ of $(0,0,0,\mu)$, $H$ is a diffeomorphism
of $U$ onto its image, smooth to the degree of smoothness of $N$ and $Z$, by the 
inverse function theorem \cite[p.122]{LangAnalysisII}.    
Consequently, an interval of the form $(0,0,0){\times}I'$
is mapped smoothly and bijectively onto the equilibrium locus of the system
\eqref{aug_system}.

For $Y = (N, P, Z, \mu)$ in $H(U)$ we can write $Y = H(X)$, where $X = (x, y, z, \mu)$.  
To simplify the notation, write  $dY/dt = F(Y)$ for system \eqref{aug_system}.
Then
  \begin{align}
    \frac{dY}{dt}& = DH(X)\cdot \frac{dX}{dt} = F(Y) = (F\circ H)(X), \notag
\intertext{so}
\frac{dX}{dt} &= DH(X)^{-1}\cdot (F\circ H)(X) \label{aug_system_abbr}
  \end{align}
is a formal expression for the system with respect to the new coordinates.  
If $Y_0$ is an equilibrium solution of system \eqref{aug_system},
and $H(X_0) = Y_0$, then $X_0$ is an equilibrium solution of the system \eqref{aug_system_abbr}.  

Let us now examine the relation between $DF(Y_0)$, the Jacobian of system \eqref{aug_system} at $Y_0$,
 and the Jacobian of the system \eqref{aug_system_abbr} at $X_0$. We compute
 \begin{multline*}
       D\bigl( DH(X)^{-1} \cdot F \circ H(X) \bigr)\bigr\rvert_{X = X_0} 
\\=
        D\bigl(X \mapsto DH(X)^{-1}\bigr)\bigr\rvert_{X = X_0} \cdot F \circ H (X_0)
    +  DH(X_0)^{-1}\cdot D\bigl(X \mapsto F \circ H (X)\bigr) \bigr\rvert_{X = X_0},
\\ \shoveleft{
\text{applying the Leibniz rule,}}
\\
=  D\bigl(X \mapsto DH(X)^{-1}\bigr)\bigr\rvert_{X = X_0} \cdot 0
    +  DH(X_0)^{-1}\cdot DF\bigl( H (X_0)\bigr) \cdot DH(X) \bigr\rvert_{X = X_0},
\\ \shoveleft{\text{by the chain rule,}}
 \\= DH(X_0)^{-1}\cdot DF\bigl( H (X_0)\bigr) \cdot DH(X_0) = DH(X_0)^{-1}\cdot DF(Y_0 ) \cdot DH(X_0) .  
   \end{multline*}
Thus, the Jacobian of   system \eqref{aug_system_abbr} at the equilibrium $X_0$ is
simply a conjugate of the Jacobian of   system \eqref{aug_system} at the equilibrium $Y_0 = H(X_0)$. 
In particular, the eigenvalues of the Jacobians are the same. We can now prove our main theorem.
\begin{theorem}
  \label{Hopfbifurcation}
Assume the hypotheses of theorem \ref{stability_general} hold and
that $f_1$ and $f_2$ are four times continuously differentiable.
For $D_1$ and $D_2$ both sufficiently close to $D$ there is a value
$\mu_{c_2}$ of the growth parameter at which the system \eqref{NPZsys}
undergoes a Hopf bifurcation, resulting in the appearance of a cycle. 
\end{theorem}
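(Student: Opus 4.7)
The plan is to apply the Hopf bifurcation theorem \ref{HassardHopf} to the transformed system \eqref{aug_system_abbr}, viewed (after restriction to a fixed $\mu$-slice) as a 3-dimensional system in $(x,y,z)$ parameterized by $\mu$. Sections \ref{Coexistence_equilibrium_stability} and \ref{Cycles} have already done the heavy lifting; what remains is to verify the four hypotheses of theorem \ref{HassardHopf} in the new coordinates and then to transport the resulting periodic orbits back to the original $(N,P,Z)$-system via $H$.

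First I would verify hypothesis 1. Because $d\mu/dt = 0$ in both \eqref{aug_system} and \eqref{aug_system_abbr}, the $\mu$-component is conserved along trajectories, and for each fixed $\mu$ in an open interval containing $\mu_{c_2}$ the first three equations of \eqref{aug_system_abbr} define a 3-dimensional system in $(x,y,z)$ having $(0,0,0)$ as an equilibrium; this equilibrium is isolated because $H$ is a local diffeomorphism onto the equilibrium locus of \eqref{aug_system}. For hypothesis 2, the assumption that $f_1$ and $f_2$ are four times continuously differentiable, combined with lemma \ref{NZsmoothness} and the formula \eqref{lambdaderivs} for $\lambda_Z'$, upgrades $N(\mu, D_1, D_2)$ and $Z(\mu, D_1, D_2)$ to $C^4$; hence $H$ is $C^4$, the inverse function theorem gives $H^{-1}$ the same smoothness, and the transformed vector field $DH(X)^{-1}\cdot (F\com H)(X)$ has continuous partial derivatives of order up to $L{+}2 = 4$ with $L = 2$ in a neighborhood of $(0,0,0,\mu_{c_2})$.

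For hypotheses 3 and 4 I would invoke the conjugacy computation at the end of section \ref{Cycles}: the 4-dimensional Jacobian of \eqref{aug_system_abbr} at $X_0 = (0,0,0,\mu)$ equals $DH(X_0)^{-1} \cdot DF(Y_0) \cdot DH(X_0)$, where $Y_0 = H(X_0)$, so it has the same spectrum as $DF(Y_0)$. The condition $d\mu/dt \equiv 0$ gives the 4-dimensional Jacobian of \eqref{aug_system} a block structure whose bottom row vanishes, so its spectrum is that of $J\bigl(E_2(\mu, D_1, D_2)\bigr)$ together with a zero eigenvalue corresponding to the $\mu$-direction. Stripping off the trivial $\mu$-direction, the 3-dimensional $(x,y,z)$-slice Jacobian at $(0,0,0)$ has precisely the same three eigenvalues as $J\bigl(E_2(\mu, D_1, D_2)\bigr)$. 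Theorem \ref{stability_general} then directly supplies hypotheses 3 and 4 of theorem \ref{HassardHopf}: a strictly negative real eigenvalue $\alpha(D_1, D_2)(\mu_{c_2}) < 0$ together with a complex conjugate pair of eigenvalues whose real part $\gamma(D_1, D_2)(\mu)/2$ vanishes at $\mu_{c_2}$ and has strictly positive $\mu$-derivative there. Theorem \ref{HassardHopf} then produces a family of periodic orbits of the 3-dimensional slice system through $(0,0,0)$ at $\mu = \mu_{c_2}$; applying the diffeomorphism $H$ transports these orbits to periodic orbits of \eqref{NPZsys} bifurcating from $E_2(\mu_{c_2}, D_1, D_2)$, which is the asserted Hopf bifurcation.

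I expect the main obstacle to be careful accounting rather than any single hard step. In particular, one must simultaneously isolate the genuine eigenvalues of $J\bigl(E_2(\mu, D_1, D_2)\bigr)$ from the trivial zero eigenvalue introduced by augmenting with $\mu$, confirm that $C^4$ differentiability of the uptake functions really does propagate through $\lambda_Z$ and $H^{-1}$ to the $C^{L+2}$ requirement of theorem \ref{HassardHopf} with $L = 2$, and check that the local nature of the diffeomorphism $H$ does not obstruct the transport of the cycle back to the original $(N,P,Z)$-coordinates on the domain $H(U)$.
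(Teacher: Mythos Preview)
Your proposal is correct and follows essentially the same route as the paper: verify hypotheses 1--4 of theorem~\ref{HassardHopf} for the transformed system \eqref{aug_system_abbr} using the conjugacy relation and theorem~\ref{stability_general}, then push the resulting periodic orbits forward through $H$. Your treatment is in fact slightly more explicit than the paper's in isolating the trivial zero eigenvalue coming from the $\mu$-direction and confirming that the remaining three eigenvalues of the $(x,y,z)$-slice Jacobian coincide with those of $J\bigl(E_2(\mu, D_1, D_2)\bigr)$; the paper leaves this implicit in the phrase ``the evolution of the eigenvalues \dots\ has the same characteristics.''
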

\begin{proof}
Let us now make the  assumption that $\Dist{(D_1, D_2)}{(D,D)}$ is so small that
the conclusions of theorem \ref{stability_general} hold, 
giving us a number $\mu_{c_2}$ (depending on $D_1$ and $D_2$, but not necessarily uniquely determined)
at which the Jacobian $J\bigl(E(\mu_{c_2}, D_1, D_2)\bigr)$ has a purely imaginary pair of eigenvalues,
crossing the imaginary axis in the complex plane transversally. 
The assumption that  $f_1$ and $f_2$ are four times continuously differentiable fulfills part two of theorem \ref{HassardHopf}. 
Now review elements of the proof pertaining to the eigenvalues
of the Jacobian of system \eqref{aug_system} at 
$(N(\mu, D_1, D_2), \lambda_Z(D_2), Z(\mu, D_1, D_2), \mu)$ as $\mu$ ranges over a small interval 
and increases through $\mu_{c_2}$.  Throughout, the Jacobian has a negative real eigenvalue, by 
theorem \ref{stability_general}, part one.
By part two of theorem  \ref{stability_general}, for $\mu < \mu_{c_2}$ and sufficiently near $\mu_{c_2}$, 
there is a complex-conjugate pair of eigenvalues with negative real part.
At $\mu = \mu_{c_2}$, the real part vanishes, and, for $\mu > \mu_{c_2}$  and sufficiently near $\mu_{c_2}$,  
there is a complex-conjugate pair of eigenvalues with positive real part. Moreover, the derivative
of the function selecting the real part of the complex-conjugate pair is positive 
at $\mu_{c_2}$. That is, the locus of the complex-conjugate pair of eigenvalues crosses the imaginary axis transversally.

By our observations on the relationship of the system \eqref{aug_system} 
to the system \eqref{aug_system_abbr},
as the parameter $\mu$ varies, 
the evolution of the eigenvalues of equilibria of system \eqref{aug_system_abbr} has the same characteristics.
Thus, hypotheses 4 and 3 of theorem \ref{HassardHopf} are satisfied for system \eqref{aug_system_abbr}.
Having assumed that $f_1$ and $f_2$ 
are four times continuously differentiable, then hypothesis 2  of theorem \ref{HassardHopf} is also satisfied.
Finally, since $(0,0,0)$ is an isolated equilibrium for all relevant values of $\mu$ for system \eqref{aug_system_abbr},
 hypothesis 1  of theorem \ref{HassardHopf} is satisfied.
The consequence is that the equilibrium $(0,0,0)$ undergoes a Hopf bifurcation at 
$\mu = \mu_{c_2}$, 
after which cycles appear in the phase portrait of system \eqref{aug_system_abbr}.  
Then the local diffeomorphism $H$ carries this portrait forward into the phase portrait of  system \eqref{aug_system}.
Thus, we have proved that cycles appear in our original system \eqref{NPZsys} when the parameter $\mu$ 
slightly exceeds $\mu_{c_2}$. 
\end{proof}

One can make similar observations viewing the Routh-Hurwitz expressions 
$a_1$, $a_2$, and $a_3$ as functions of $(D_1, D_2)$, but
it seems that this only delivers ``loss of stability,'' which is not quite enough.   
One can also play around with derivatives
of $J\bigl(E_2(\mu, D_1, D_2)\bigr)$ with respect to $D_1$ and $D_2$.   
One might get information about how far 
$(D_1, D_2)$ can vary from $(D,D)$ and still have a result.
\section{Simulations and Examples}
\label{Examples}
We consider now system \eqref{NPZsys} and take $f_1$ and $f_2$ to be
Michaelis-Menten functions, also called Holling type II functions.   We choose notations as follows.
\begin{equation}
    f_1(N) = m_1 N/(\alpha_1 + N) \quad \text{and} \quad f_2(P) = m_2P/(\alpha_2 + P). \label{HIIdefs}
  \end{equation}
With these choices explicit formulas can be given for many quantities studied in earlier sections. 
For example, we  obtain formulas for the numbers $\lambda_P(D_1)$ and $\lambda_Z(D_2)$ defined in equation \eqref{breakeven}.
For $\lambda_P(D_1)$ we have
\begin{equation}\label{HIIlambdaP}
  \gamma_1 \cdot \bigl(m_1 N/(\alpha_1 + N)\bigr) = D_1 
\quad \text{with solution} \quad 
N = D_1 \alpha_1/(\gamma_1 m_1 - D_1) \mathrel{\mathop :}= \lambda_P(D_1);
\end{equation}
for $\lambda_Z(D_2)$ we have
\begin{equation} \label{HIIlambdaZ}
   \gamma_2 \bigl( m_2 P/(\alpha_2 + P)\bigr) = D_2 
\quad \text{with solution} \quad 
P = D_2 \alpha_2/(\gamma_2 m_2 - D_2) \mathrel{\mathop :}= \lambda_Z(D_2).
\end{equation}
For rate functions of this type, the equation \eqref{1steqnwithZ_2notzero}  determining $N(\mu, D_1,D_2)$ reduces to 
a quadratic equation. In principle, one obtains explicit solutions for $N(\mu, D_1, D_2)$ and the nonnegative solution is easily identified.  
Then the equation \eqref{2ndeqnwithZ_2notzero} for $Z(\mu, D_1, D_2)$ is linear and easily solved for $Z(\mu, D_1, D_2)$.

With these remarks we can turn to an illustration of theorems \ref{stability_special} and \ref{stability_general} in a case
using Holling type II rate functions. 
With these rate functions, the concavity of the function $f_2$ guarantees that the condition 
$D/\bigl(\gamma_2 \lambda_Z(D)\bigr) > f_2'\bigl(\lambda_Z(D)\bigr)$ is satisfied.
First, we set $D{=}D_1{=}D_2{=}1.0 $ and the remaining parameters to the following values.
\begin{align*}
  m_1 &= 1 &  \alpha_1 &= 0.2 &  \gamma_1 &= 2 
\\
  m_2 &= 2 &   \alpha_2 &= 0.5 & \gamma_2 &=1.5
\end{align*}
With all quantities involved explicitly computed,  the formula given in \eqref{A} for the  real part
of the complex conjugate pair of eigenvalues of the linearization of the system at the coexistence equilibrium
can be made explicit, though messy,  and is easily plotted by a computer algebra system. This is the solid curve 
in figure \ref{realpartcompare}, which shows that a Hopf bifurcation occurs in the vicinity of $\mu = 0.6$. 
Figure \ref{h2Dsequal} exhibits solutions of the system for $\mu$ slightly smaller and slightly larger than the bifurcation
value $\mu_{c_2}(D, D) \approx 0.6$.

Next, keep $D{=}1$ and set $D_1{=}1.2$ and $D_2{=}1.3$.  Interpreted graphically, theorem \ref{stability_general} says
that the graph of the function defined by taking the real part of the complex conjugate pair of eigenvalues
associated with the coexistence equilibrium is an increasing function whose graph lies in a neighborhood 
of the curve we discussed in the preceding paragraph.
We do not have an explicit formula for {\em this} function with the new values for $D_1$ and $D_2$, but we can 
can estimate its values by numerically computing the eigenvalues of the linearization along a sequence of $\mu$-values.
Figure \ref{realpartcompare} also shows a sequence of points derived from eigenvalue approximations when $D_1{=}1.2$ and $D_2{=}1.3$. 
Interpolating a curve through the plotted points, we see a Hopf bifurcation occurs in the vicinity of $\mu = 0.9$.
Figure \ref{h2Dsunequal} shows trajectories of this system for values of $\mu$ slightly smaller and slightly larger
than the bifurcation value.

To provide an additional illustration of the result of theorem \ref{stability_general}, we consider a version
of system \eqref{NPZsys} incorporating rate functions with the property that the graphs have inflection points.
Consider
\begin{equation}
    f_1(N) = m_1 N^2/(\alpha_1 + N^2) \quad \text{and} \quad f_2(P) = m_2P^2/(\alpha_2 + P^2). \label{HIIIdefs}
  \end{equation}
with the parameter values
\begin{equation*}
  m_1 = 1.7, \quad \alpha_1 = 0.8, \quad  m_2 = 1.6  \quad  \alpha_2 = 0.9.
\end{equation*}
Further, set
\begin{equation*}
  \gamma_1 = 0.8,  \quad \gamma_2 = 0.9, \quad
D=1, \quad  D_1=1.2, \quad  D_2=1.1.
\end{equation*}
 Then the condition
$D/\bigl(\gamma_2 \lambda_Z(D)\bigr) > f_2'\bigl(\lambda_Z(D)\bigr)$
of theorem \ref{stability_general}
is satisfied, but this is not a consequence of the concavity of the graph of $f_2$. 

Determining $\lambda_P(D_1)$ and $\lambda_Z(D_2)$ in this example requires solving quadratic equations, so
obtaining exact values is quite easy.  Consequently, one can compute explicitly from \eqref{mu_{c1}} the value
$\mu_{c_1}(D_1, D_2)$  beyond which the coexistence equilibrium exists. Locating a coexistence equilibrium
$E_2(\mu, D_1, D_2)$ requires solving a cubic equation for $N(\mu, D_1, D_2)$, for which it is more appropriate
to use numerical methods.  
We choose a sequence of $\mu$ values starting beyond $\mu_{c_1}(D_1, D_2)$, 
approximate the coexistence equilibria and their Jacobian matrices $J\bigl(E_2(\mu, D_1, D_2)\bigr)$,
and numerically compute the real part of the complex pair of eigenvalues of the Jacobian for each $\mu$ value. 
 Plotting the real
part against $\mu$ produces figure~\ref{fig:hh3realpart}, which exhibits the expected change of sign
and shows that a Hopf bifurcation occurs in the vicinity of $\mu = 7.25$; trajectories 
for parameters slightly smaller and slightly larger than the bifurcation value are shown in figure~\ref{h3Dsunequal}.
\begin{figure}[h!]
  \centering
 \includegraphics[width=0.5\linewidth,viewport=20 360 475 695,clip=true]{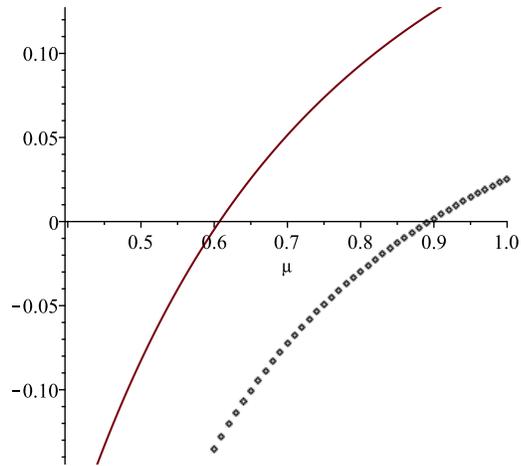}
  \caption{Comparison of real parts.  For the solid curve $D{=}D_1{=}D_2=1$; for the symbols $D{=}1$, $D_1{=}1.2$ and $D_2{=}1.3$.}
  \label{realpartcompare}
\end{figure}
\newpage
\begin{figure}[h!]
  \centering
  \begin{minipage}[c]{0.45\linewidth}
\centering
    \fbox{\includegraphics[width=0.80\linewidth,viewport=20 360 475 695,clip=true]{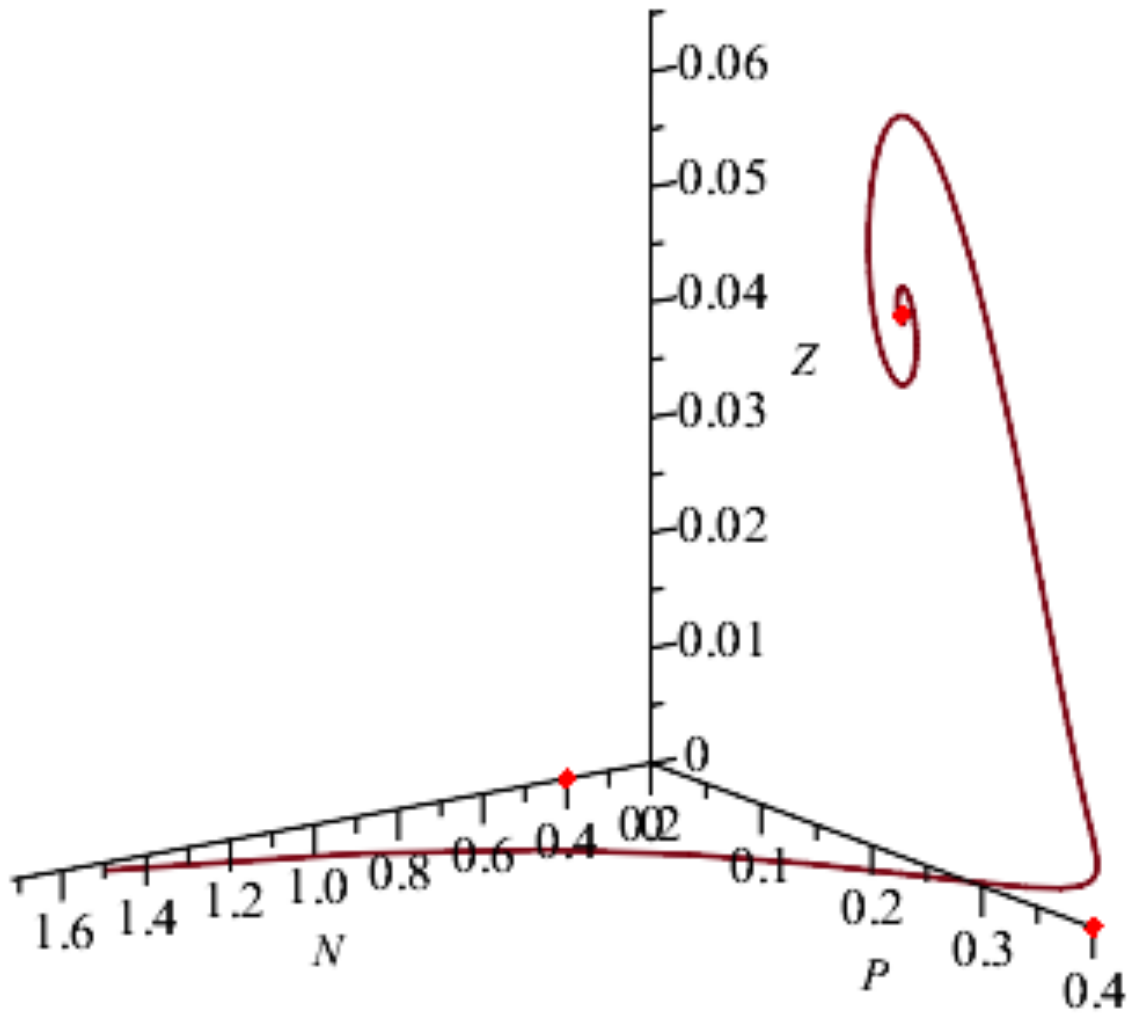}}
  \end{minipage}%
  \begin{minipage}[c]{0.45\linewidth}
\centering
     \fbox{\includegraphics[width=0.80\linewidth,viewport=20 360 475 695,clip=true]{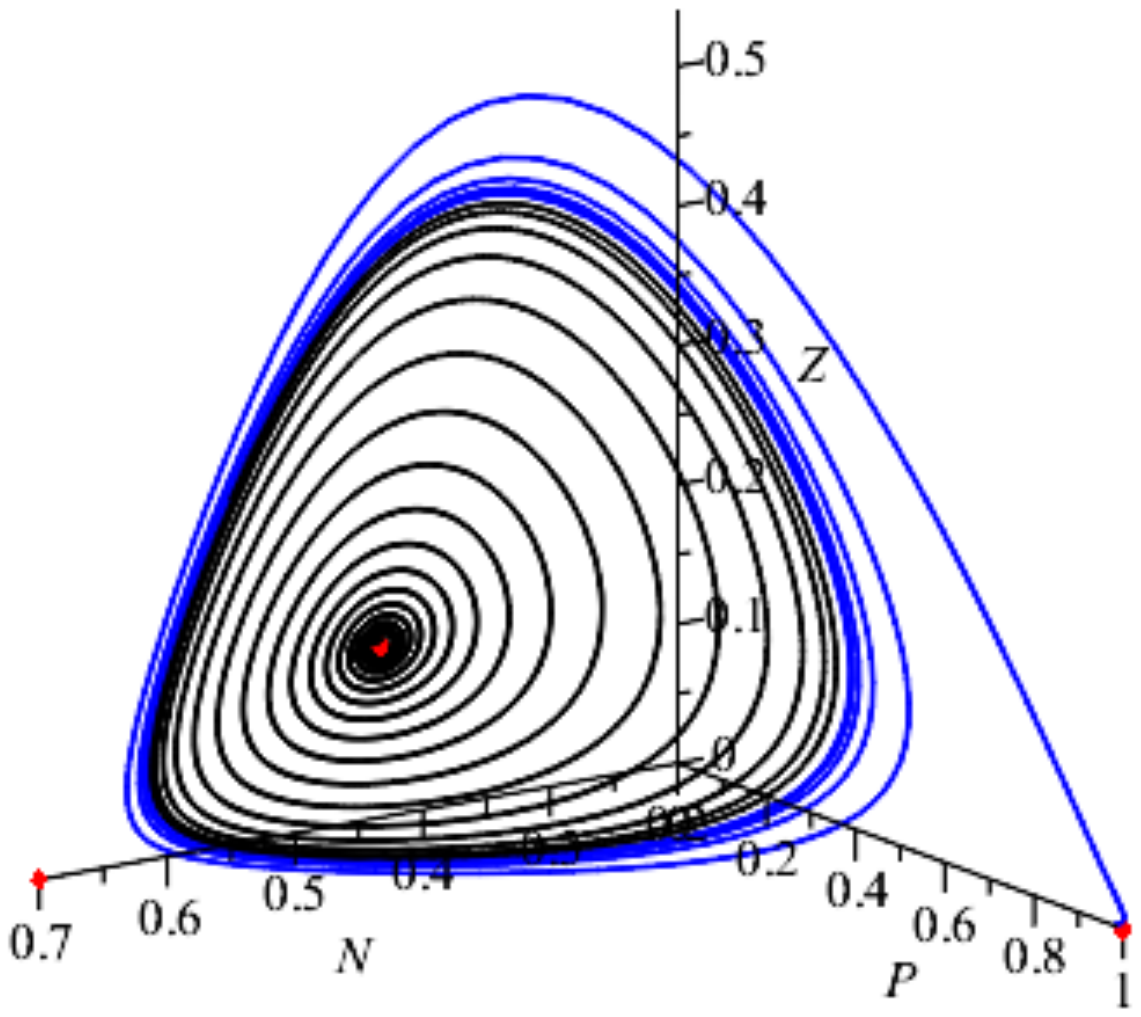}}
  \end{minipage}
\caption{Before and after a Hopf bifurcation: $D{=}D_1{=}D_2{=}1$ and Holling type II rate functions} \label{h2Dsequal}
\end{figure}
\begin{figure}[h!]
  \centering
  \begin{minipage}[c]{0.45\linewidth}
\centering
    \fbox{\includegraphics[width=0.80\linewidth,viewport=20 360 475 695,clip=true]{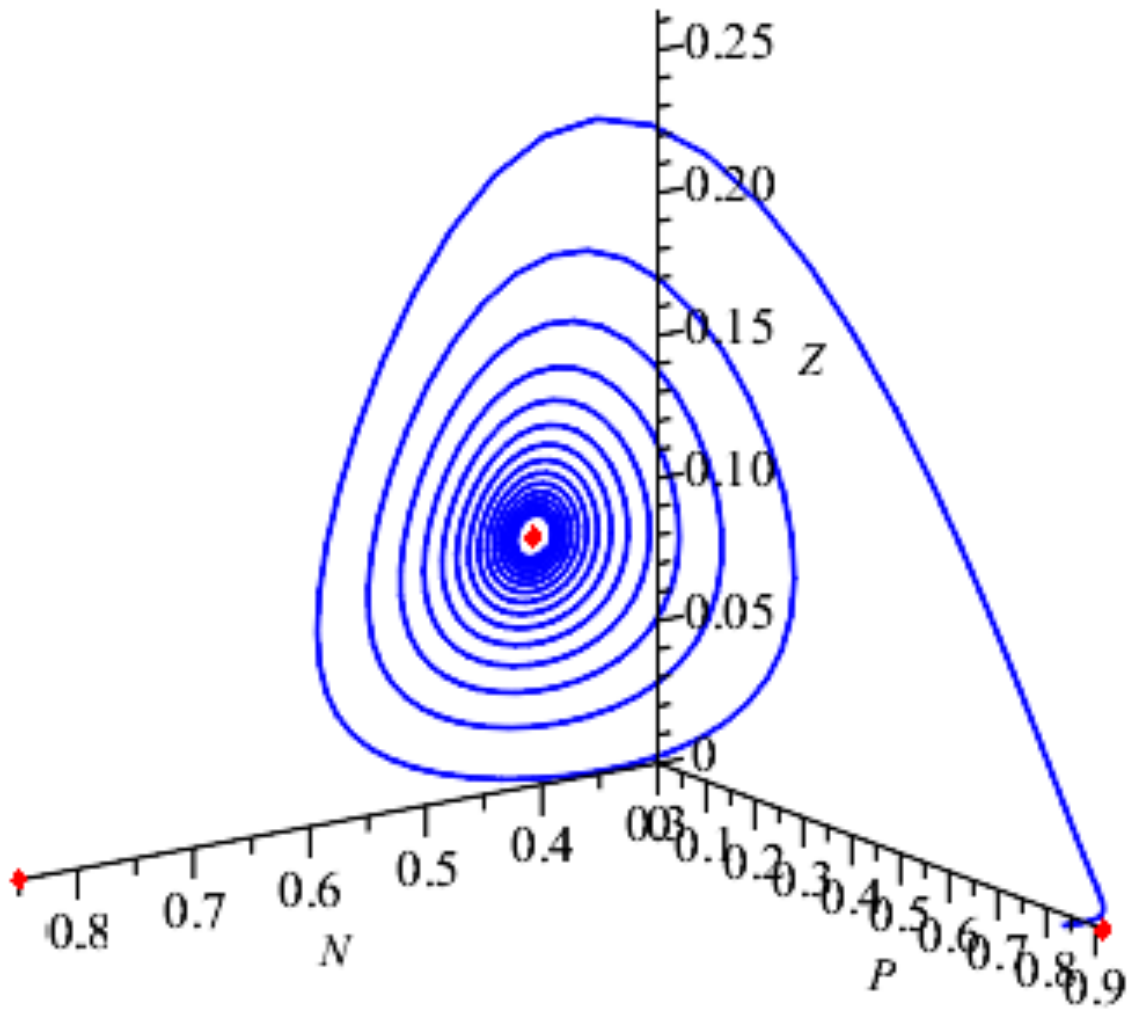}}
  \end{minipage}%
  \begin{minipage}[c]{0.45\linewidth}
\centering
     \fbox{\includegraphics[width=0.80\linewidth,viewport=20 360 475 695,clip=true]{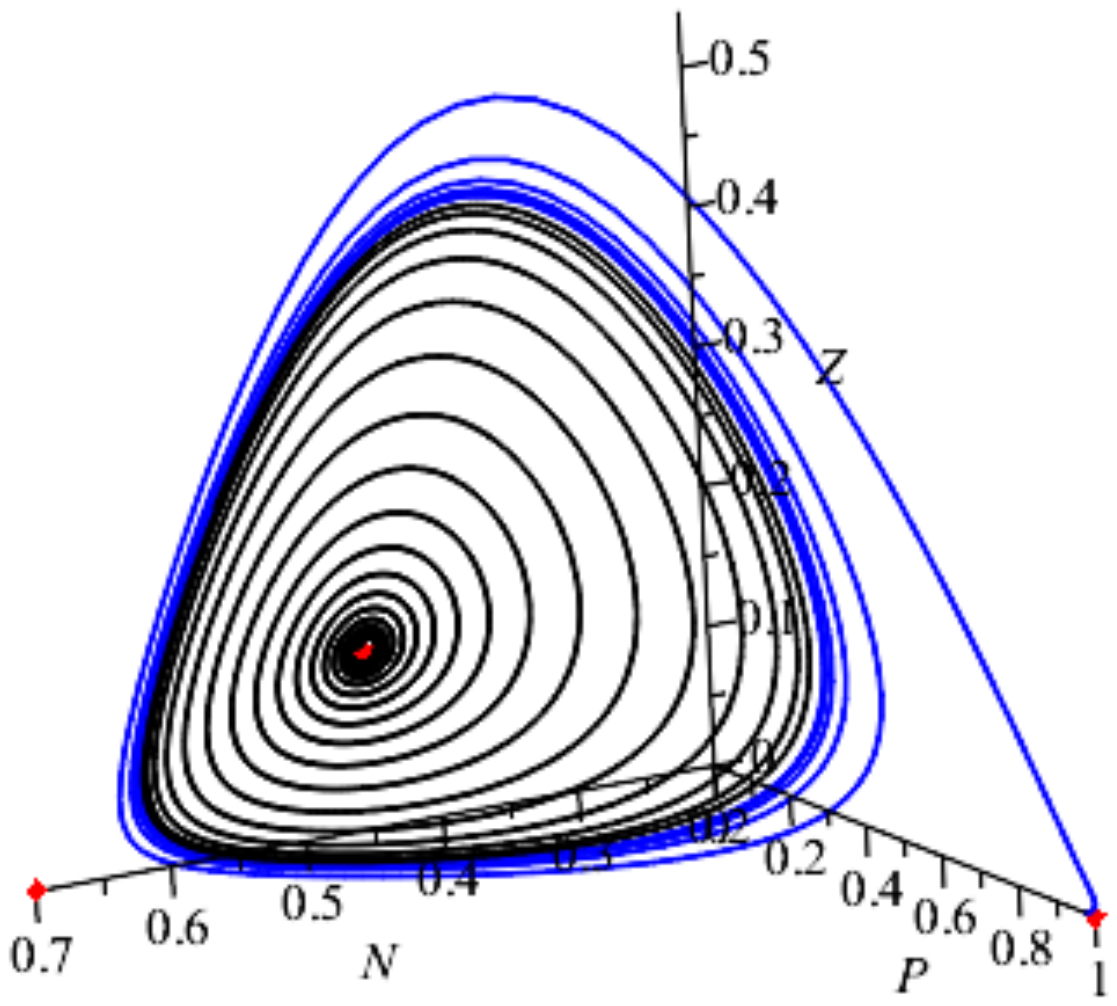}}
  \end{minipage}
\caption{Before and after Hopf bifurcation:  $D{=}1$, $D_1{=}1.2$ and $D_2{=}1.3$ and Holling type II rate functions} \label{h2Dsunequal}
\end{figure}
\begin{figure}[h!]
  \centering
  \includegraphics[width=0.5\linewidth,viewport=20 360 475 695,clip=true]{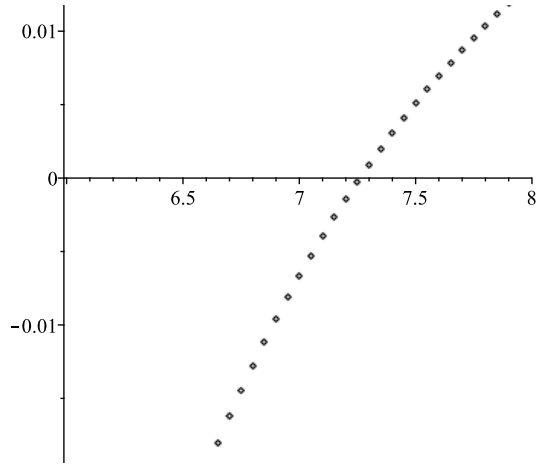}
  \caption{Real parts using rate functions \eqref{HIIIdefs}}
  \label{fig:hh3realpart}
\end{figure}
\begin{figure}[h!]
  \centering
  \begin{minipage}[c]{0.45\linewidth}
\centering
    \fbox{\includegraphics[width=0.80\linewidth,viewport=20 360 475 695,clip=true]{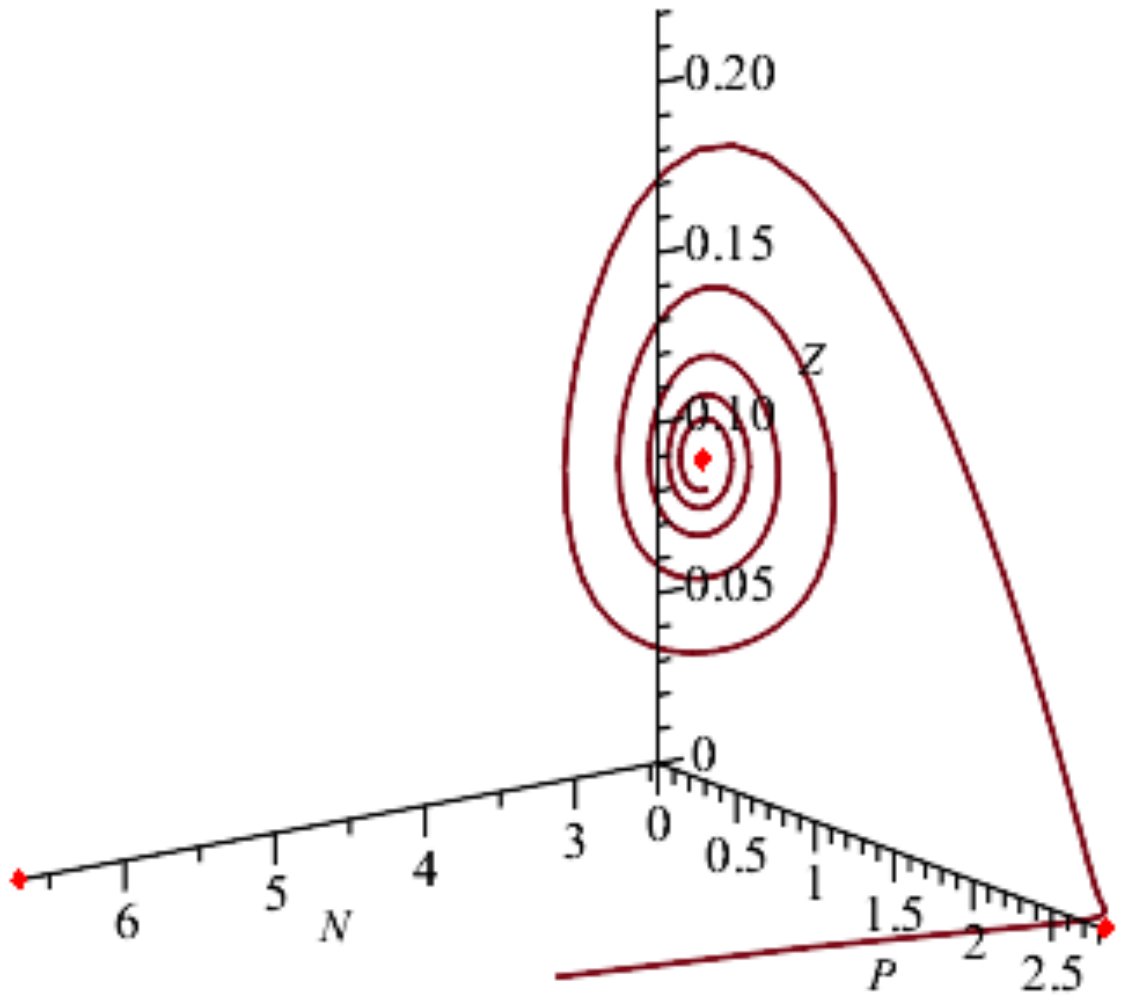}}
  \end{minipage}%
  \begin{minipage}[c]{0.45\linewidth}
\centering
     \fbox{\includegraphics[width=0.80\linewidth,viewport=20 360 475 695,clip=true]{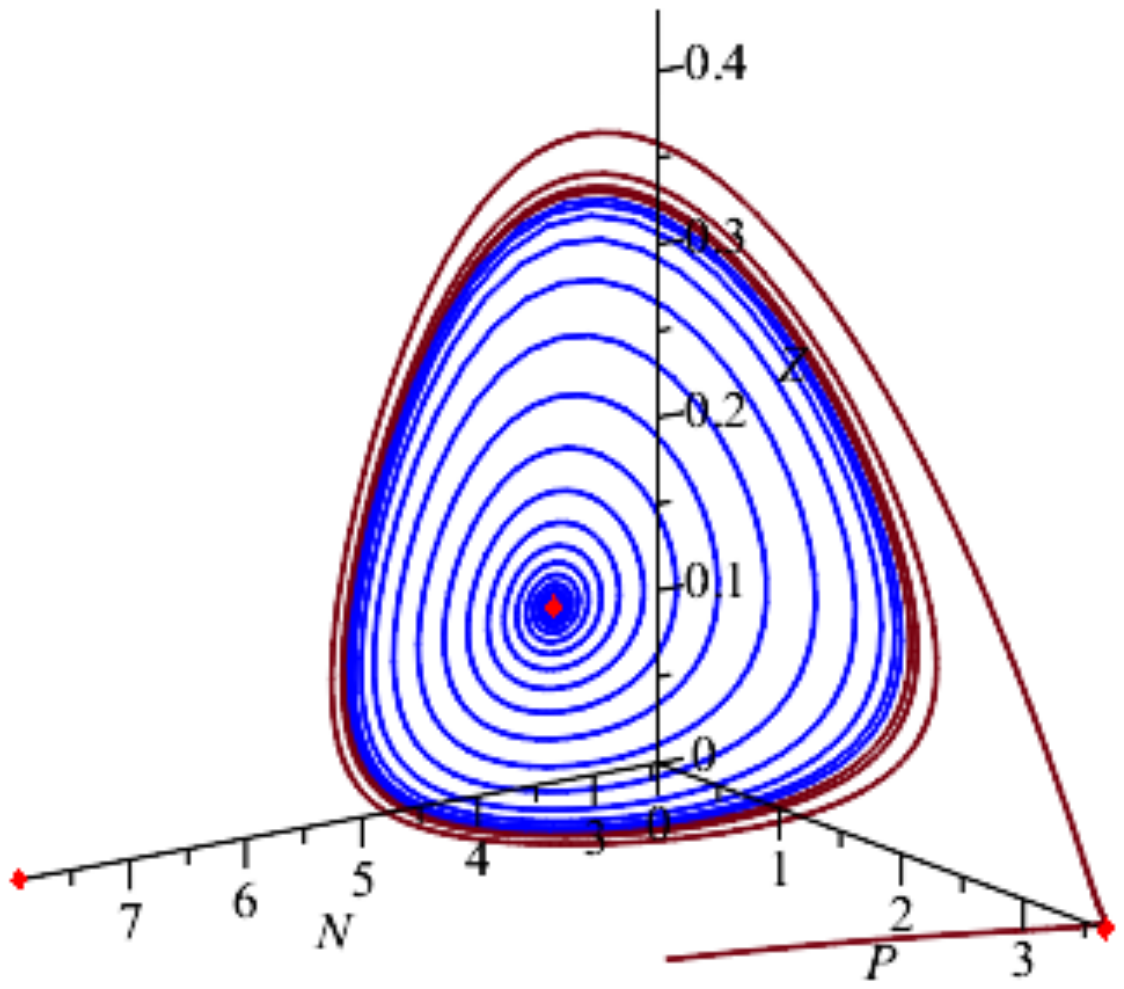}}
  \end{minipage}
\caption{Before and after Hopf bifurcation: $D{=}1$, $D_1{=}1.2$, and $D_2{=}1.1$ and using rate functions \eqref{HIIIdefs}} \label{h3Dsunequal}
\end{figure}
\newpage
\section{Uniform Approximation}
\label{Appendix}
For the proof of theorem \ref{stability_general}, we need lemma
\ref{uniformapproximation}, which states that, when $(D_1,D_2)$ is close to $(D,D)$,
$\gamma'(D_1,D_2)(\mu)$ is  uniformly approximated by $\gamma'(D,D)(\mu) = A'(\mu)$ 
on an interval $[\mu_{c_2}- \delta_0, \mu_{c_2}+ \delta_0]$, where $\mu_{c_2}$ is a point where
$A(\mu_{c_2}){=}0$ and $A'(\mu_{c_2}) > 0$. 

To recapitulate theorem \ref{stability_special},  the working assumptions are that a value $D$ is fixed,
 a coexistence equilibrium $E_2(\mu, D, D)$ exists, 
and that there is a range of parameters $\mu$ for
which the linearizations at the coexistence equilibrium 
have a complex conjugate pair of eigenvalues.  
Moreover, at the parameter value $\mu_{c_2}$, 
the linearization of the system has a purely imaginary pair of eigenvalues.
For any $\mu$ slightly smaller than $\mu_{c_2}$, the pair of complex eigenvalues
has a negative real part and for any $\mu$ slightly larger than $\mu_{c_2}$ the pair of complex
eigenvalues has a positive real part. 
By lemma
\ref{NZsmoothness}
there is an interval $I$ containing $\mu_{c_2}$ and a disc $\Delta$ centered at $(D,D)$
such that the functions $N(\mu, D_1, D_2)$ and $Z(\mu, D_1, D_2)$ defined on $I{\times}\Delta$
smoothly parametrize the locus of coexistence equilibria near $E_2(\mu, D,D)$.

Uniform approximation of $\gamma'(D_1, D_2)(\mu)$ by $\gamma'(D, D)(\mu)$ implies that, for the
coexistence equilibrium $E_2(\mu, D_1, D_2)$, there is a $\mu$-interval in $I$ for which the eigenvalues of
the linearizations exhibit the same qualitative behavior as described for
$E_2(\mu,D,D)$, as shown in theorem~\ref{stability_general}.
\begin{proposition}[Lemma \ref{uniformapproximation}]
  \label{convergenceestimate}
Assume $f_1$ is three times continuously differentiable and $f_2$ is two times
continuously differentiable. 
Then
there exists a $\mu$-interval $[\mu{-}\hat{\delta}, \mu{+}\hat{\delta}]$ on which the $\mu$-derivative $\gamma'(D_1,D_2)(\mu)$ is uniformly approximated by
$\gamma'(D,D)(\mu) = A'(\mu)$.
In fact, there exists a constant $C$ such that 
\begin{equation}
  \label{derivativebound}
  \abs{ \gamma'(D_1, D_2)( \mu) - \gamma'(D,D)(\mu)} \leq C \cdot \Dist{(D_1, D_2)}{(D, D)}
\end{equation}
for any $\mu \in [\mu_{c_2}{-}\hat{\delta}, \mu_{c_2}{+}\hat{\delta}]$.
\end{proposition}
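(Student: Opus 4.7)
The plan is to trace the degree of smoothness of the factor function $\gamma(D_1, D_2)(\mu)$ as a jointly $C^1$ function of $(\mu, D_1, D_2)$ whose mixed partials $\partial_\mu\partial_{D_i}$ also exist and are continuous, and then convert this regularity into the stated Lipschitz bound by a routine compactness-plus-mean-value-theorem argument.

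The first step is a smoothness trace. By lemma \ref{NZsmoothness}, the coexistence equilibrium coordinates $N(\mu, D_1, D_2)$ and $Z(\mu, D_1, D_2)$ inherit the smoothness of the implicit-function data, which is the minimum of the smoothness of $f_1$ and $f_2$; under our hypotheses this is $C^2$. The entries of the Jacobian $J(E_2(\mu, D_1, D_2))$ in \eqref{JacobianE2} are algebraic expressions in $f_1$ and $f_1'$ evaluated at $N$, and in $f_2'$ evaluated at $\lambda_Z(D_2)$, multiplied by $N$, $\lambda_Z(D_2)$, $Z$, or constants. A term-by-term check shows that all first partials of these entries in $(\mu, D_1, D_2)$ are continuous and, more importantly, that the mixed second partials $\partial_\mu\partial_{D_i}$ also exist and are continuous. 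The most demanding term is the one involving $f_1'(N)$: its $\partial_\mu\partial_{D_i}$ derivative brings in $f_1'''$ and so is covered by $f_1 \in C^3$. The terms involving $f_2'(\lambda_Z(D_2))$ never require more than $f_2''$ (coming from $\lambda_Z'(D_2)=(\gamma_2 f_2'(\lambda_Z(D_2)))^{-1}$ via \eqref{lambdaderivs} and from differentiating $f_2'(\lambda_Z(D_2))$ in $D_2$), and are covered by $f_2 \in C^2$.

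The characteristic polynomial coefficients $a_1, a_2, a_3$ from \eqref{JE2coefficienta1}-\eqref{JE2coefficienta3} are polynomial combinations of these Jacobian entries, so they inherit the same regularity. By lemma \ref{product_decomposition}, the local inverse $M^{-1}$ of the multiplication map $M \colon P_1^- \times P_2^- \to P_3^-$ is smooth on its domain. Composing the coefficient map with $M^{-1}$ therefore yields factor functions $\alpha(D_1, D_2)(\mu)$, $\beta(D_1, D_2)(\mu)$, $\gamma(D_1, D_2)(\mu)$ that are $C^1$ jointly in $(\mu, D_1, D_2)$ and have continuous mixed partials $\partial_\mu\partial_{D_1}$ and $\partial_\mu\partial_{D_2}$. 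In particular, $\gamma'(D_1, D_2)(\mu)$ is continuous in all three variables and is continuously differentiable in $(D_1, D_2)$, with partials that depend continuously on $\mu$.

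To finish, choose a closed ball $\bar B$ around $(D, D)$ and a closed interval $[\mu_{c_2}-\hat\delta, \mu_{c_2}+\hat\delta]$, both small enough so that the previous construction is defined on their product. On this compact set the map $(\mu, D_1, D_2) \mapsto \nabla_{(D_1, D_2)} \gamma'(D_1, D_2)(\mu)$ is continuous, hence bounded in norm by some constant $C$. For each fixed $\mu$ in the interval, the mean value theorem applied along the straight line segment from $(D, D)$ to $(D_1, D_2) \in \bar B$ yields
\begin{equation*}
\abs{\gamma'(D_1, D_2)(\mu) - \gamma'(D, D)(\mu)} \leq C \cdot \Dist{(D_1, D_2)}{(D, D)},
\end{equation*}
which is \eqref{derivativebound}. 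I expect the main obstacle to be the meticulous bookkeeping in the smoothness trace: the hypotheses $f_1 \in C^3$ and $f_2 \in C^2$ are essentially tight, so one must verify carefully that each mixed partial entering the expression for $\partial_{D_i} \gamma'$ is actually justified by these hypotheses and that no term secretly demands an extra derivative of $f_2$.
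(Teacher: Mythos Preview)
Your plan is correct and, at its core, follows the same strategy as the paper: both arguments boil down to showing that $\gamma'(D_1,D_2)(\mu)$ is Lipschitz in $(D_1,D_2)$ with a constant that is uniform on a compact $\mu$-interval, and both obtain this by bounding the relevant derivatives on a compact domain and invoking the mean value theorem.

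The organizational difference is one of abstraction. You argue directly that $(\mu,D_1,D_2)\mapsto\gamma'(D_1,D_2)(\mu)$ has continuous $(D_1,D_2)$-partials (via joint regularity of the $p_j$ and smoothness of $M^{-1}$), then apply compactness and the mean value theorem in one stroke. The paper instead unpacks the chain rule expression $\gamma'=\langle\nabla\gamma(p_0,p_1,p_2),(p_0',p_1',p_2')\rangle$, splits the difference $\gamma'(D_1,D_2)(\mu)-\gamma'(D,D)(\mu)$ by the triangle inequality into two summands (Lemmas~\ref{firstsummand} and~\ref{secondsummand}), and then proves term-by-term Lipschitz bounds on $\lvert p_j(\mu,D_1,D_2)-p_j(\mu,D,D)\rvert$ and $\lvert p_j'(\mu,D_1,D_2)-p_j'(\mu,D,D)\rvert$ (Propositions~\ref{p0bounds}--\ref{p2bounds}), reducing these in turn to eight elementary constituent bounds (Proposition~\ref{constituentbounds}). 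What you call ``the meticulous bookkeeping in the smoothness trace'' is precisely this proposition, where the paper verifies by hand that each constituent difference---$\lambda_Z$, $f_1(N)$, $f_1'(N)$, $Z$, $f_2'(\lambda_Z)$, $f_1''(N)$, $N'$, $Z'$---is controlled by $\Dist{(D_1,D_2)}{(D,D)}$, and it is there that $f_1\in C^3$ and $f_2\in C^2$ are used exactly as you anticipate. Your packaging is tidier; the paper's explicit bounds have the advantage of making transparent which hypothesis is consumed where.
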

The proposition follows from a sequence of lemmas and estimates, given below.  
In the course of proving these results, we find it necessary to impose the differentiability conditions
on $f_1$ and $f_2$. 

An essential ingredient in the process is to obtain bounds on magnitudes of the differences 
\begin{align*}
  p_0(\mu, D_1, D_2) - p_0(\mu, D, D),& &  p_1(\mu, D_1, D_2) - p_1(\mu, D,  D),& &
                             p_2(\mu, D_1, D_2) - p_2(\mu, D, D),
\intertext{and}
    p'_0(\mu, D_1, D_2) - p'_0(\mu, D, D),& & p'_1(\mu, D_1, D_2) - p'_1(\mu, D,  D),& &
      p'_2(\mu, D_1, D_2) - p'_2(\mu, D, D)
\end{align*}
in terms of $\Dist{(D_1, D_2)}{(D, D)}$,
 and where the $p_i(\mu, D_1, D_2)= -a_{3-i}(\mu, D_1, D_2)$, $0 \leq i \leq 2$ are given by 
the formulas \eqref{JE2coefficienta1}, \eqref{JE2coefficienta2}, and \eqref{JE2coefficienta3}.

We now explain the role played by these bounds.
By the chain rule, we compute $\gamma'(D_1,D_2)(\mu)$ as the inner product of
 a row vector $\nabla \gamma$ with a column vector $(p_0', p_1', p_2')$:
\begin{equation*}
\gamma'(D_1,D_2)(\mu) = \bigl \langle \nabla \gamma(p_0,p_1,p_2) , ( p_0', p_1', p_2') \bigr \rangle (\mu, D_1, D_2).
\end{equation*}
\begin{remark}
  In order to avoid extremely long expressions in the following analysis, 
 we use abbreviations such as
\begin{equation}  \label{abbreviation}
\bigl[D(M^{-1})\bigr](\mu, D_1,D_2)
\mathrel{\mathop :}=
  D(M^{-1})\bigl(p_0(\mu, D_1, D_2),p_1(D_1, D_2, \mu),p_2(\mu, D_1, D_2)\bigr).
\end{equation}
\end{remark}
We can write
\begin{align*}
[\nabla \gamma(p_0, p_1,p_2)](\mu, D_1, D_2) &= \pi_3 \circ  [D(M^{-1})(p_0, p_1,p_2)](\mu, D_1, D_2) 
\\
 &=
 \langle 0, 0, 1 \rangle  
\cdot
 \begin{bmatrix}
    \nabla \alpha (p_0, p_1,p_2)
\\
   \nabla \beta (p_0, p_1,p_2)
\\
   \nabla \gamma (p_0, p_1,p_2)
  \end{bmatrix}(\mu, D_1, D_2),
\end{align*}
where $\langle 0, 0, 1 \rangle$ represents the projection $\pi_3$ to the third coordinate 
$\gamma$ in $P_1(x)^-{\times}P_2(x)^-$.
The gradients  $\nabla \alpha$,  $\nabla \beta$, and $\nabla \gamma$ are evaluated at 
\begin{align*}
p_0(\mu, D_1, D_2) &= -a_3(\mu, D_1, D_2),
\\
p_1(\mu, D_1, D_2) &= -a_2(\mu, D_1, D_2),
\intertext{and}
p_2(\mu, D_1, D_2) &= -a_1(\mu, D_1, D_2),
\end{align*}
where explicit expressions for $a_i(\mu, D_1, D_2)$ are given in 
\eqref{JE2coefficienta1}, \eqref{JE2coefficienta2}, and \eqref{JE2coefficienta3}.
The derivatives $p_0'$, $p_1'$, and $p_2'$ are evaluated at $(\mu, D_1, D_2)$. 

Applying  the convention of \eqref{abbreviation}, we have
\begin{multline*}
  \gamma'(D_1, D_2)( \mu) - \gamma'(D,D)(\mu)
 \\
=  \bigl[\pi_3 \circ  D(M^{-1})(p_0, p_1, p_2)\cdot ( p_0', p_1', p_2')\bigr](\mu, D_1, D_2)
\\
   - \bigl[\pi_3 \circ D(M^{-1})(p_0, p_1, p_2)\cdot ( p_0',p_1', p_2' ) \bigr](\mu, D, D ).
\end{multline*}
Before we go farther, we compress taking the derivative $D(M^{-1})$  at $(p_0, p_1, p_2)$ and evaluating on the vector 
$( p_0', p_1', p_2' )$, writing
\begin{equation*}
   D(M^{-1})\cdot ( p_0', p_1', p_2' ) \mathrel{\mathop :}= D(M^{-1})(p_0, p_1, p_2)\cdot ( p_0', p_1', p_2').
\end{equation*}
Then the previous equation becomes
\begin{multline*}
  \gamma'(D_1, D_2)( \mu) - \gamma'(D,D)(\mu)
 \\
= \bigl[\pi_3 \circ  D(M^{-1}) \cdot ( p_0', p_1', p_2' )\bigr](\mu, D_1, D_2) 
       - \bigl[\pi_3 \circ D(M^{-1}) \cdot ( p_0',p_1', p_2' ) \bigr](\mu, D, D ).
\end{multline*}
We can estimate using operator norms computed in terms of the Euclidean metrics. 
\begin{multline}\label{gammaprimedifference}
 \abs{ \gamma'(D_1, D_2)( \mu) - \gamma'(D,D)(\mu)}
\\
=
 \abs{\bigl[ \pi_3 \circ  D(M^{-1})\cdot ( p_0', p_1', p_2' ) \bigr](\mu, D_1, D_2)
   - \bigl[\pi_3 \circ D(M^{-1})\cdot ( p_0',p_1', p_2') \bigr](\mu, D, D)}
\\
\leq \norm{\pi_3} \bignorm {\bigl[ D(M^{-1})\cdot ( p_0', p_1', p_2' ) \bigr](\mu, D_1, D_2)-\bigl[D(M^{-1})\cdot ( p_0',p_1', p_2') \bigr](\mu, D, D)}
\\
\leq  \bignorm { \bigl[D(M^{-1}) \cdot ( p_0', p_1', p_2' ) \bigr](\mu, D_1, D_2)-\bigl[D(M^{-1})\cdot ( p_0',p_1', p_2') \bigr](\mu, D, D)},
\end{multline}
since the norm of a projection is $1$.  Now we use the triangle inequality to bound the last expression.
\begin{multline} \label{gammaprimedifference1}
  \bignorm{ \bigl[ D(M^{-1})\cdot ( p_0', p_1', p_2' ) \bigr](\mu, D_1, D_2)-\bigl[D(M^{-1})\cdot ( p_0',p_1', p_2') \bigr](\mu, D, D)}
\\
  \leq \bignorm{\bigl[ D(M^{-1})\cdot ( p_0', p_1', p_2' ) \bigr](\mu, D_1, D_2)
    - \bigl[ D(M^{-1})\bigr](\mu, D_1, D_2)\cdot \bigl[ ( p_0', p_1', p_2' ) \bigr](\mu, D, D)}
\\
  + \bignorm{ \bigl[ D(M^{-1})\bigr](\mu, D_1, D_2)\cdot \bigl[ ( p_0', p_1', p_2' ) \bigr](\mu, D, D)
            - \bigl[D(M^{-1})\cdot ( p_0',p_1', p_2') \bigr](\mu, D, D) }.
\end{multline}
Explicit expansion of the first summand in \eqref{gammaprimedifference1} is given in the proof
of lemma \ref{firstsummand}, where we will see the role of the bounds on
\begin{equation*}
      p'_0(\mu, D_1, D_2) - p'_0(\mu, D, D), \quad  p'_1(\mu, D_1, D_2) - p'_1(\mu, D,  D), \quad
      p'_2(\mu, D_1, D_2) - p'_2(\mu, D, D).
\end{equation*}
Similarly, explicit expansion of the second summand in \eqref{gammaprimedifference1} is given in the proof
of lemma \ref{secondsummand}, where we will see the role of the bounds on
\begin{equation*}
  p_0(\mu, D_1, D_2) - p_0(\mu, D, D), \quad  p_1(\mu, D_1, D_2) - p_1(\mu, D,  D), \quad
      p_2(\mu, D_1, D_2) - p_2(\mu, D, D).
\end{equation*}
\begin{lemma}
  \label{firstsummand}
There is a constant $C_3$ such that the first summand in the expansion \eqref{gammaprimedifference1}
satisfies
\begin{multline}
  \label{firstsummandbound}
  \bignorm{\bigl[ D(M^{-1})\cdot ( p_0', p_1', p_2' ) \bigr](\mu, D_1, D_2)
    - \bigl[ D(M^{-1})\bigr](\mu, D_1, D_2)\cdot\bigl[ ( p_0', p_1', p_2' ) \bigr](\mu, D, D)}
\\
\leq  C_3 \, \cdot \, \Dist{(D_1, D_2)}{(D,D)}.
\end{multline}
\end{lemma}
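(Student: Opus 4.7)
The key structural observation is that the two expressions on the left-hand side of \eqref{firstsummandbound} both involve the same linear map $\bigl[D(M^{-1})\bigr](\mu, D_1, D_2)$ (the derivative of $M^{-1}$ at the point $(p_0, p_1, p_2)(\mu, D_1, D_2)$, per the convention \eqref{abbreviation}) applied to two different vectors. By linearity, the expression in \eqref{firstsummandbound} equals
\[
\bignorm{\bigl[D(M^{-1})\bigr](\mu, D_1, D_2) \cdot \Bigl[(p_0', p_1', p_2')(\mu, D_1, D_2) - (p_0', p_1', p_2')(\mu, D, D)\Bigr]},
\]
which, using the operator norm subordinate to the Euclidean norm, is bounded above by
\[
\bignorm{\bigl[D(M^{-1})\bigr](\mu, D_1, D_2)} \cdot \bignorm{(p_0', p_1', p_2')(\mu, D_1, D_2) - (p_0', p_1', p_2')(\mu, D, D)}.
\]
The plan is then to bound each factor separately.

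For the first factor, Lemma \ref{product_decomposition} shows that $M^{-1}$ is smooth on a neighborhood $V$ of $\chi\bigl(J(E_2(\mu_{c_2}, D, D))\bigr)$ in $P_3^-(x)$. Choose a compact subneighborhood $K \subset V$; the continuous map $D(M^{-1})$ is then bounded in operator norm by some constant $\kappa$ on $K$. Since the coefficients $p_i(\mu, D_1, D_2)$ are continuous in their arguments, I may shrink the $\mu$-interval around $\mu_{c_2}$ and the disc around $(D, D)$ so that the triple $\bigl(p_0, p_1, p_2\bigr)(\mu, D_1, D_2)$ remains in $K$, yielding the uniform bound $\bignorm{[D(M^{-1})](\mu, D_1, D_2)} \leq \kappa$ throughout the region of interest.

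For the second factor, I will show each $p_i'(\mu, D_1, D_2)$ is Lipschitz in $(D_1, D_2)$ uniformly in $\mu$. The formulas \eqref{JE2coefficienta1}--\eqref{JE2coefficienta3} exhibit each $p_i$ as a polynomial expression in $D_1$, $D_2$, $\lambda_Z(D_2)$, $Z(\mu, D_1, D_2)$, $f_1(N(\mu, D_1, D_2))$, $f_1'(N(\mu, D_1, D_2))$, and $f_2'(\lambda_Z(D_2))$. Differentiating in $\mu$ (and noting $\lambda_Z(D_2)$ and $f_2'(\lambda_Z(D_2))$ are $\mu$-independent), $p_i'$ is expressible in the same quantities augmented by $f_1^{(2)}(N)$, $N'(\mu, D_1, D_2)$, and $Z'(\mu, D_1, D_2)$. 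Differentiating once more in $D_1$ or $D_2$ then requires one further derivative of $f_1$ (through $N$, $N'$, and $Z'$, via Lemma \ref{NZsmoothness}) and one further derivative of $f_2$ (through $\lambda_Z(D_2)$, using the formula for $\lambda_Z'(D_2)$ in Lemma \ref{lambda_diff}). Under the hypotheses $f_1 \in C^3$ and $f_2 \in C^2$, the partial derivatives $\partial p_i' / \partial D_j$ exist, are continuous in $(\mu, D_1, D_2)$, and hence are bounded on a compact neighborhood of $(\mu_{c_2}, D, D)$. The mean value inequality then gives constants $\ell_0, \ell_1, \ell_2$ with
\[
|p_i'(\mu, D_1, D_2) - p_i'(\mu, D, D)| \leq \ell_i \cdot \Dist{(D_1, D_2)}{(D, D)}, \quad i = 0, 1, 2,
\]
uniformly for $\mu$ in the chosen interval.

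Setting $C_3 = \kappa \cdot \sqrt{\ell_0^2 + \ell_1^2 + \ell_2^2}$ and combining yields the claimed bound. The main obstacle is the bookkeeping in the third paragraph: one must verify that each chain-rule expansion of $\partial p_i'/\partial D_j$ never demands more regularity from $f_1$ or $f_2$ than the hypotheses supply. Concretely, $f_1$ appears in $p_i'$ with at most two derivatives, so a further $\partial/\partial D_j$ brings in at most $f_1^{(3)}$, while $f_2'$ appears evaluated at $\lambda_Z(D_2)$, so differentiating in $D_2$ brings in at most $f_2^{(2)}$ through $\lambda_Z'(D_2)$. The regularity assumptions are thereby matched precisely, and no sharper hypothesis is needed.
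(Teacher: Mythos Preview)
Your proof is correct and follows the same top-level strategy as the paper: factor out the common linear map $[D(M^{-1})](\mu,D_1,D_2)$, bound its operator norm by compactness, and then bound $\bignorm{(p_0',p_1',p_2')(\mu,D_1,D_2)-(p_0',p_1',p_2')(\mu,D,D)}$ by a constant times $\Dist{(D_1,D_2)}{(D,D)}$.

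The one notable methodological difference is in how the Lipschitz bounds on the $p_i'$ are obtained. The paper defers this to Propositions~\ref{p0bounds}--\ref{p2bounds}, whose proofs expand each $p_i'(\mu,D_1,D_2)-p_i'(\mu,D,D)$ explicitly and bound it term by term via Lemma~\ref{elementary} and the eight ``constituent'' estimates of Proposition~\ref{constituentbounds}. You instead argue in one stroke that each $\partial p_i'/\partial D_j$ is a continuous function of $(\mu,D_1,D_2)$---checking that the chain rule never calls for more than $f_1^{(3)}$ or $f_2^{(2)}$---and then invoke the mean value inequality on a compact neighborhood. Your route is considerably more economical and makes transparent why exactly $f_1\in C^3$ and $f_2\in C^2$ are the right hypotheses; the paper's route, while lengthier, has the virtue of producing the constants in a form that could in principle be tracked.
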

\begin{proof}
By the basic property of the operator norm, 
\begin{multline*}
  \bignorm{\bigl[ D(M^{-1})\cdot( p_0', p_1', p_2' ) \bigr](\mu, D_1, D_2)
    - \bigl[ D(M^{-1})\bigr](\mu, D_1, D_2)\cdot\bigl[ ( p_0', p_1', p_2' ) \bigr](\mu, D, D)}
\\
\leq \bignorm{\bigl[ D(M^{-1})\bigr](\mu, D_1, D_2)  }\cdot 
   \bignorm{\bigl[( p_0', p_1', p_2' ) \bigr](\mu, D_1, D_2) - \bigl[ ( p_0', p_1', p_2' ) \bigr](\mu, D, D)  }
\end{multline*}
Examining the factor coming from the operator norm, 
\begin{multline}  \label{firstfactorfirstsummand}
  \bignorm{\bigl[ D(M^{-1})\bigr](\mu, D_1, D_2)}
\\
  = \bignorm{D(M^{-1})\bigl(p_0(\mu, D_1, D_2), p_1(\mu, D_1, D_2), p_2(D_1,D_2, \mu)\bigr)}\leq C_{DM^{-1}},
\end{multline}
for some constant $C_{DM^{-1}}$.  This is because, as $(D_1,D_2)$ ranges over any closed disc centered on $(D,D)$ and $\mu$
ranges over any closed interval containing $\mu_{c_2}$,
the coordinates $(p_0, p_1, p_2)$ are contained in a compact set, so there is a constant $C_{DM^{-1}}$ that bounds the norm
of $D(M^{-1})$ at any of these points. 

Now we require a bound on the other factor, for which we have
\begin{multline}  \label{secondfactorfirstsummand}
   \bignorm{[( p_0', p_1', p_2') ](\mu, D_1, D_2)-  [(p_0', p_1', p_2') ](\mu, D, D)}^2
\\
 = \bigabs{p_0'(\mu, D_1, D_2){-}p_0'(\mu, D, D)}^2 
            \!+\! \bigabs{p_1'(\mu, D_1, D_2){-}p_1'(\mu, D, D)}^2 
                          \!+\! \bigabs{p_2'(\mu, D_1, D_2){-}p_2'(\mu, D, D)}^2
\\
\leq \bigl((C_0')^2{+}(C_1')^2{+}(C_2')^2\bigr)\,\cdot\,\Dist{(D_1, D_2)}{(D, D)}^2,
\end{multline}
provided by combining the results of propositions \ref{p0bounds}, \ref{p1bounds}, and \ref{p2bounds}.
Combining this bound with the bound
on the expression \eqref{firstfactorfirstsummand}, the first summand in \eqref{gammaprimedifference1} is
bounded by a constant times $\Dist{(D_1, D_2)}{(D, D)}$.
\end{proof}
Now we bound the second summand in \eqref{gammaprimedifference1}.
\begin{lemma}
  \label{secondsummand}
There is a constant $C_4$ such that the second summand in the expansion \eqref{gammaprimedifference1} satisfies
\begin{multline}
  \label{secondsummandbound}
 \bignorm{ \bigl[ D(M^{-1})\bigr](\mu, D_1, D_2)\cdot\bigl[ ( p_0', p_1', p_2' ) \bigr](\mu, D, D)
- \bigl[D(M^{-1})\cdot( p_0',p_1', p_2') \bigr](\mu, D, D) }
\\
\leq
C_4 \, \cdot \, \Dist{(D_1, D_2)}{(D,D)}.
\end{multline}
\end{lemma}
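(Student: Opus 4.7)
The structural observation driving the argument is that the vector $(p_0', p_1', p_2')(\mu, D, D)$ appears in both summands of the expression being bounded. Writing $X = \bigl[ ( p_0', p_1', p_2' ) \bigr](\mu, D, D)$, the quantity on the left-hand side of \eqref{secondsummandbound} takes the form
\begin{equation*}
   \bignorm{ \bigl( [ D(M^{-1})](\mu, D_1, D_2) - [ D(M^{-1})](\mu, D, D) \bigr) \cdot X },
\end{equation*}
since the common vector $X$ can simply be factored out on the right. By the submultiplicative property of the operator norm, this is at most
\begin{equation*}
\bignorm{ [ D(M^{-1})](\mu, D_1, D_2) - [D(M^{-1})](\mu, D, D) } \cdot \norm{X}.
\end{equation*}

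The plan is to bound these two factors separately. The factor $\norm{X}$ is easy: since $p_i'(\mu, D, D)$ depends continuously on $\mu$ (by Lemma \ref{NZsmoothness} together with the explicit formulas \eqref{JE2coefficienta1}--\eqref{JE2coefficienta3}), it is bounded on the compact interval $[\mu_{c_2}{-}\hat\delta, \mu_{c_2}{+}\hat\delta]$ by some constant, call it $K_1$. The other factor requires the smoothness of $M^{-1}$ established in Lemma \ref{product_decomposition}. Because $M$ is a local diffeomorphism between open subsets of Euclidean spaces, $M^{-1}$ is smooth and so $D(M^{-1})$ is itself $C^1$. On any compact neighborhood of the point $\bigl(p_0(\mu_{c_2}, D, D), p_1(\mu_{c_2}, D, D), p_2(\mu_{c_2}, D, D)\bigr)$, $D(M^{-1})$ is therefore Lipschitz; call the Lipschitz constant $L$. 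Consequently
\begin{equation*}
\bignorm{ [D(M^{-1})](\mu, D_1, D_2) - [D(M^{-1})](\mu, D, D) } \leq L\cdot \bignorm{ (p_0, p_1, p_2)(\mu, D_1, D_2) - (p_0, p_1, p_2)(\mu, D, D) }.
\end{equation*}

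To finish, the plan is to invoke Propositions \ref{p0bounds}, \ref{p1bounds}, and \ref{p2bounds} to control each component difference $\bigabs{p_i(\mu, D_1, D_2) - p_i(\mu, D, D)}$ by a constant multiple of $\Dist{(D_1, D_2)}{(D,D)}$. Combining the three component bounds via the Euclidean norm yields
\begin{equation*}
\bignorm{ (p_0, p_1, p_2)(\mu, D_1, D_2) - (p_0, p_1, p_2)(\mu, D, D) } \leq K_2 \cdot \Dist{(D_1, D_2)}{(D,D)}
\end{equation*}
for a constant $K_2$, uniformly for $\mu$ in the compact interval. Taking $C_4 = K_1 L K_2$ then produces the desired bound. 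The main obstacle, strictly speaking, is verifying that one can select a single compact neighborhood in $P_3^-$ on which $D(M^{-1})$ admits a uniform Lipschitz constant while simultaneously containing the image of $(\mu, D_1, D_2) \mapsto (p_0, p_1, p_2)(\mu, D_1, D_2)$ for all $(D_1, D_2)$ in some disc around $(D,D)$ and $\mu$ in the chosen $\mu$-interval; this is handled by shrinking the disc around $(D,D)$ and the interval around $\mu_{c_2}$ until the resulting images lie well inside a compact set on which Lemma \ref{product_decomposition} guarantees $M^{-1}$ is defined and smooth.
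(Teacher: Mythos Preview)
Your proposal is correct and follows essentially the same route as the paper's proof: factor out the common vector $X=(p_0',p_1',p_2')(\mu,D,D)$, bound $\norm{X}$ by compactness, and control the difference of the Jacobians $[D(M^{-1})]$ via a Lipschitz estimate together with Propositions~\ref{p0bounds}--\ref{p2bounds}. The only cosmetic difference is that the paper obtains the Lipschitz bound by invoking the mean value theorem for vector-valued functions and bounding $\bignorm{D(D(M^{-1}))}$ on a compact set, whereas you phrase it directly as Lipschitz continuity of the $C^1$ map $D(M^{-1})$; these are the same argument.
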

\begin{proof}
To handle the second summand in \eqref{gammaprimedifference1}, 
we have the bound
\begin{multline} \label{gammaprimesecondsummand}
  \bignorm{ \bigl[ D(M^{-1})\bigr](\mu, D_1, D_2)\cdot\bigl[ ( p_0', p_1', p_2' ) \bigr](\mu, D, D)
- \bigl[D(M^{-1})\cdot( p_0',p_1', p_2') \bigr](\mu, D, D) }
\\
\leq
 \bignorm{ \bigl[ D(M^{-1})\bigr](\mu, D_1, D_2)- \bigl[D(M^{-1})\bigr](D,D,\mu)} \bignorm{\bigl[ ( p_0',p_1', p_2') \bigr](\mu, D, D) }.
\end{multline}
To the first factor in the bounding term, apply the mean value theorem 
\cite[p.103, Corollary~1]{LangAnalysisII} for vector-valued functions of several variables, obtaining 
\begin{multline*}
  \bignorm{[D(M^{-1})](\mu, D_1, D_2)- [D(M^{-1})](D,D,\mu)}
\\
 \leq \bignorm{[D(D(M^{-1}))](q_0,q_1,q_2)}\cdot\bignorm{[(p_0,p_1, p_2)](\mu, D_1, D_2)-[(p_0,p_1, p_2)](\mu, D, D)}.
\end{multline*}
The derivative of the map $P_3(x)^- \rightarrow M_{3,3}(\bR)$, $(p_0,p_1,p_2) \mapsto D(M^{-1})(p_0,p_1,p_2)$ is continuous,
because $M^{-1}$ is $C^{\infty}$.  The evaluation point
$(q_0, q_1,q_2)$ is on the line segment connecting 
$[(p_0,p_1, p_2)](\mu, D_1, D_2)$ and $[(p_0,p_1, p_2)](\mu, D, D)$.  
Again the possibilities range over
a compact set, so the norm of the second derivative satifies
\begin{equation}
  \label{firstfactorbound}
  \bignorm{[D(D(M^{-1}))](q_0,q_1,q_2)} \leq C_{D^2(M^{-1})}, 
\end{equation}
for a  constant $C_{D^2(M^{-1})}$  independent of $(D_1,D_2, \mu)$.
We compute
\begin{multline} \label{secondfactorbound}
  \bignorm{p(\mu, D_1, D_2)-p(\mu, D, D)}^2 =
\\
 \bigabs{p_0(\mu, D_1, D_2){-}p_0(\mu, D, D)}^2 + \bigabs{p_1(\mu, D_1, D_2){-}p_1(\mu, D, D)}^2 + \bigabs{p_2(\mu, D_1, D_2){-}p_2(\mu, D, D)}^2
\\
\leq \bigl((C_0)^2{+}(C_1)^2{+}(C_2)^2\bigr) \cdot \Dist{(D_1, D_2)}{(D, D)}^2,
\end{multline}
 by combining the results of propositions \ref{p0bounds}, \ref{p1bounds}, and \ref{p2bounds}.
Assembling the bounds in \eqref{firstfactorbound} and \eqref{secondfactorbound}, 
$\bignorm{[D(M^{-1})](\mu, D_1, D_2)- [D(M^{-1})](D,D,\mu)}$ is bounded by 
a constant times $\Dist{(D_1, D_2)}{(D, D)}$.  This takes care of the first factor on the righthand side
of \eqref{gammaprimesecondsummand}.

For the remaining  factor in the bounding term in  \eqref{gammaprimesecondsummand},  the norm of the tangent vector satisfies
\begin{equation}  \label{thirdfactorbound}
  \bignorm{[( p_0', p_1', p_2') ](\mu, D, D)} \leq C_T,
\end{equation}
for some constant $C_T$, independent of $\mu$.
Now that we have taken care of both factors on the righthand side of \eqref{gammaprimesecondsummand}
the second summand in \eqref{gammaprimedifference1} is bounded by
a constant times
$\Dist{(D_1, D_2)}{(D, D)}$, as claimed.  
\end{proof}
We can now prove the uniform convergence result.
\begin{proof}
  [Proof of proposition \ref{convergenceestimate}.]
Combining the inequalities \eqref{gammaprimedifference} and \eqref{gammaprimedifference1}, we have
\begin{multline*}
  \abs{ \gamma'(D_1, D_2)( \mu) - \gamma'(D,D)(\mu)} 
\\
\leq
\bignorm{\bigl[ D(M^{-1})\cdot ( p_0', p_1', p_2' ) \bigr](\mu, D_1, D_2)
    - \bigl[ D(M^{-1})\bigr](\mu, D_1, D_2)\cdot \bigl[ ( p_0', p_1', p_2' ) \bigr](\mu, D, D)}
\\
  + \bignorm{ \bigl[ D(M^{-1})\bigr](\mu, D_1, D_2)\cdot \bigl[ ( p_0', p_1', p_2' ) \bigr](\mu, D, D)
            - \bigl[D(M^{-1})\cdot ( p_0',p_1', p_2') \bigr](\mu, D, D) }
\end{multline*}
Using the observations detailed in lemmas \ref{firstsummand} and \ref{secondsummand}, 
\begin{equation*}
  \abs{ \gamma'(D_1, D_2)( \mu) - \gamma'(D,D)(\mu)} 
\leq
(C_3 + C_4) \, \cdot \, \Dist{(D_1, D_2)}{(D,D)}
\end{equation*}
Thus, there is a constant $C$ such that
\begin{equation*}
  \abs{ \gamma'(D_1, D_2)( \mu) - \gamma'(D,D)(\mu)} \leq C\cdot \Dist{(D_1, D_2)}{(D,D)}
\end{equation*}
for any $\mu \in [\mu_{c_2}- \delta_0, \mu_{c_2}+ \delta_0]$.
\end{proof}
We have already used a technique of obtaining bounds by splitting quantities. As we will continue to exploit the technique
in the following results, we formulate the lemma \ref{elementary} for reference. 
\begin{lemma}
  \label{elementary}
Let $Q_1(\mu, D_1, D_2)$ and $Q_2(\mu, D_1, D_2)$ be quantities defined on a domain $I{\times}\Delta$ satisfying the following conditions.
\begin{enumerate}
\item There is a constant $c_1$ such that
  \begin{equation*}
  \abs{Q_1(\mu, D_1, D_2) - Q_1(\mu, D, D)} \leq c_1\Dist{(D_1, D_2)}{(D, D)}.  
  \end{equation*}
\item  There is a constant $c_2$ such that 
  \begin{equation*}
    \abs{Q_2(\mu, D_1, D_2) - Q_2(\mu, D, D)} \leq c_2\Dist{(D_1, D_2)}{(D, D)}.
  \end{equation*}
\item  There are constants $c_3$ and $c_4$ such that $\abs{Q_1(\mu, D_1, D_2)} \leq c_3$ 
for $(\mu, D_1, D_2) \in I{\times}\Delta$ and for $\Dist{(D_1, D_2)}{(D, D)}$ sufficiently small, 
and $\abs{Q_2(\mu,D,D)}\leq c_4$ for $\mu \in I$.
\end{enumerate}
Then there is a constant $c_5$ such that, for  $\Dist{(D_1, D_2)}{(D, D)}$ sufficiently small, 
\begin{equation*}
  \abs{Q_1(\mu, D_1, D_2)\cdot Q_2(\mu, D_1, D_2) - Q_1(\mu, D, D)\cdot Q_2(\mu, D, D)} 
   \leq c_5 \Dist{(D_1, D_2)}{(D, D)}  \qed
\end{equation*} 
\end{lemma}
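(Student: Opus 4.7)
The plan is to reduce the bound on the difference of products to the two given differences of single factors, via the standard add-and-subtract trick that replaces $Q_1 Q_2 (\mu,D_1,D_2) - Q_1 Q_2(\mu,D,D)$ by a sum of one-factor differences each multiplied by a bounded quantity. Concretely, I would insert the intermediate term $Q_1(\mu, D_1, D_2)\cdot Q_2(\mu, D, D)$ and write
\begin{equation*}
  Q_1 Q_2(\mu, D_1, D_2) - Q_1 Q_2 (\mu, D, D)
  = Q_1(\mu, D_1, D_2)\bigl[Q_2(\mu,D_1,D_2)-Q_2(\mu,D,D)\bigr]
   + \bigl[Q_1(\mu,D_1,D_2)-Q_1(\mu,D,D)\bigr]Q_2(\mu,D,D).
\end{equation*}

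Next I would apply the triangle inequality and the submultiplicativity of the absolute value. For the first summand, the factor $\abs{Q_1(\mu,D_1,D_2)}$ is bounded above by $c_3$ whenever $\Dist{(D_1,D_2)}{(D,D)}$ is sufficiently small (hypothesis~3), and the bracketed difference is bounded by $c_2\Dist{(D_1,D_2)}{(D,D)}$ (hypothesis~2). For the second summand, the bracketed difference is bounded by $c_1\Dist{(D_1,D_2)}{(D,D)}$ (hypothesis~1), and the remaining factor $\abs{Q_2(\mu,D,D)}$ is bounded by $c_4$ (hypothesis~3). Adding these two estimates yields the desired inequality with $c_5 = c_2 c_3 + c_1 c_4$.

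There is no real obstacle here: the lemma is a purely algebraic packaging of the standard Lipschitz-times-bounded estimate, and the hypotheses are exactly designed so that each of the two summands in the split expression can be controlled. The only point requiring a modicum of care is that the uniform bound $\abs{Q_1(\mu,D_1,D_2)}\leq c_3$ in hypothesis~3 is assumed only for $\Dist{(D_1,D_2)}{(D,D)}$ sufficiently small, so the final conclusion is stated with the same qualification; the qualification is harmless in subsequent applications because the lemma is always invoked in a shrinking neighborhood of $(D,D)$.
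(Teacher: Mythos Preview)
Your proposal is correct and is exactly the standard add-and-subtract argument the paper has in mind; in fact the paper does not write out a proof at all but simply appends \qed\ to the statement, having remarked just before that ``we have already used a technique of obtaining bounds by splitting quantities.'' Your explicit constant $c_5 = c_2 c_3 + c_1 c_4$ and your observation about the ``sufficiently small'' qualification are both accurate.
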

As has been seen, the proof of proposition \ref{convergenceestimate} depends on the following three propositions. 
\begin{proposition}
  \label{p0bounds}
There are constants $C_0$ and $C_0'$ such that 
\begin{align}
  \abs{p_0(\mu, D_1, D_2) - p_0(\mu, D, D)} &\leq C_0\cdot \Dist{(D_1, D_2)}{(D, D)}, \label{p0bound}
\\
 \abs{p_0'(\mu, D_1, D_2) - p_0'(\mu, D, D)} &\leq C_0'\cdot \Dist{(D_1, D_2)}{(D, D)}. \label{p0primebound}
\end{align}
\end{proposition}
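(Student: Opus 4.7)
The plan is to exploit the explicit formula \eqref{JE2coefficienta3} for $a_3 = -p_0$, namely
\begin{equation*}
   p_0(\mu, D_1, D_2) = -D_2 \, Z(\mu, D_1, D_2)\, f_2'\bigl(\lambda_Z(D_2)\bigr) \bigl(D + \lambda_Z(D_2)\, f_1'\bigl(N(\mu, D_1, D_2)\bigr)\bigr),
\end{equation*}
and to view $p_0$ as a product of factors, each of which is a smooth function of $(\mu, D_1, D_2)$ on the compact product $I{\times}\overline{\Delta}$ (with $\overline{\Delta}$ a closed disc around $(D,D)$ contained in the open disc of Lemma \ref{NZsmoothness}). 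The assumption $f_1 \in C^3$ and $f_2 \in C^2$ guarantees, via Lemma \ref{NZsmoothness} and Lemma \ref{lambda_diff}, that each of the functions $D_2$, $Z(\mu, D_1, D_2)$, $\lambda_Z(D_2)$, $f_2'(\lambda_Z(D_2))$, and $f_1'(N(\mu, D_1, D_2))$ is at least $C^1$ jointly in all variables on this compact domain.

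First I would apply the mean value theorem in the $(D_1, D_2)$ variables with $\mu$ held fixed to each such factor $F$. Compactness gives a uniform bound on the gradient of $F$ in $(D_1, D_2)$, so that
\begin{equation*}
  \bigabs{F(\mu, D_1, D_2) - F(\mu, D, D)} \le c_F \cdot \Dist{(D_1, D_2)}{(D, D)}
\end{equation*}
with $c_F$ independent of $\mu \in I$. Then I would invoke Lemma \ref{elementary} repeatedly (the boundedness hypotheses are clear from compactness plus continuity) to combine these factor-wise Lipschitz estimates into a single bound of the desired form for the product $p_0$. This yields \eqref{p0bound}.

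For \eqref{p0primebound}, I would differentiate the product representation above with respect to $\mu$ using the product and chain rules. The result is a finite sum of products whose new factors include $Z'(\mu, D_1, D_2)$, $N'(\mu, D_1, D_2)$, and $f_1''(N(\mu, D_1, D_2))$; no new $D_i$-derivatives appear because $D_2$, $\lambda_Z(D_2)$, and $f_2'(\lambda_Z(D_2))$ do not depend on $\mu$. Each of these new factors is again $C^1$ in $(D_1, D_2)$ on $I{\times}\overline{\Delta}$: for $N', Z'$ this follows by differentiating the defining equations \eqref{1steqnwithZ_2notzero}--\eqref{2ndeqnwithZ_2notzero} and using the implicit-function formulas, while for $f_1''(N(\mu, D_1, D_2))$ it requires precisely the hypothesis $f_1 \in C^3$. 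Applying the same factor-wise mean value argument and Lemma \ref{elementary} to this expanded product expression gives \eqref{p0primebound}.

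The main obstacle is the bookkeeping of differentiability orders: one must verify at each step that enough derivatives of $f_1$ and $f_2$ are available so that the factor being differenced is in fact $C^1$ in $(D_1, D_2)$. The tight spots are $f_2'(\lambda_Z(D_2))$, whose $D_2$-derivative contains $f_2''$ via $\lambda_Z'(D_2) = (\gamma_2 f_2'(\lambda_Z(D_2)))^{-1}$ (forcing $f_2 \in C^2$), and $f_1''(N(\mu, D_1, D_2))$ in the derivative computation, whose partials with respect to $(D_1, D_2)$ contain $f_1'''$ (forcing $f_1 \in C^3$). Once these are observed, the argument is essentially a controlled expansion of the product rule combined with compactness.
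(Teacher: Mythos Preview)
Your proposal is correct and follows essentially the same approach as the paper: both decompose $p_0$ (and then $p_0'$) into a product of constituent factors, obtain Lipschitz bounds on each factor via the mean value theorem and compactness (the paper records these separately as Proposition~\ref{constituentbounds} together with Propositions~\ref{DDbounds} and~\ref{muD1D2bounds}), and then combine them using Lemma~\ref{elementary}. Your differentiability bookkeeping matches the paper's requirements exactly.
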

\begin{proposition}
  \label{p1bounds}
There are constants $C_1$ and $C_1'$ such that 
\begin{align}
  \abs{p_1(\mu, D_1, D_2) - p_1(\mu, D, D)} &\leq C_1\cdot \Dist{(D_1, D_2)}{(D, D)}, \label{p1bound}
\\
 \abs{p_1'(\mu, D_1, D_2) - p_1'(\mu, D, D)} &\leq C_1'\cdot \Dist{(D_1, D_2)}{(D, D)}, \label{p1primebound}
\end{align}
\end{proposition}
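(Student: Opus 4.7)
The plan is to expand $p_1(\mu, D_1, D_2) = -a_2(\mu, D_1, D_2)$ using formula \eqref{JE2coefficienta2} as a finite sum of products of \emph{elementary factors} and then propagate Lipschitz-type bounds through the sums and products. The elementary factors are the constants $D$, $D_1$, $D_2$, the functions $\lambda_Z(D_2)$ and $f_2'(\lambda_Z(D_2))$ of $D_2$ alone, and the compositions $f_1(N(\mu, D_1, D_2))$, $f_1'(N(\mu, D_1, D_2))$, together with $Z(\mu, D_1, D_2)$. Lemma \ref{NZsmoothness} guarantees that $N$ and $Z$ are smooth in $(\mu, D_1, D_2)$ to the minimum of the degrees of smoothness of $f_1$ and $f_2$, so on a compact rectangle $I \times \Delta$ (with $I$ a closed subinterval containing $\mu_{c_2}$ and $\Delta$ a closed disc around $(D,D)$) every elementary factor is continuously differentiable in $(D_1, D_2)$.

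First, for each elementary factor $Q(\mu, D_1, D_2)$ I would apply the mean value theorem to the map $(D_1, D_2) \mapsto Q(\mu, D_1, D_2)$ on $\Delta$. The partial derivatives $\partial Q/\partial D_i$ are continuous on $I \times \Delta$, hence bounded by compactness; this yields
\[
    |Q(\mu, D_1, D_2) - Q(\mu, D, D)| \leq c_Q\, \Dist{(D_1, D_2)}{(D, D)}
\]
with a constant $c_Q$ independent of $\mu \in I$. I would then apply Lemma \ref{elementary} inductively to combine bounds across products (hypothesis (3) of that lemma is automatic by continuity on the compact set $I \times \Delta$), and finally use the triangle inequality to add the summands. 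This establishes \eqref{p1bound}.

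Second, for \eqref{p1primebound}, I would differentiate the expansion of $p_1$ with respect to $\mu$ via the product rule, obtaining again a finite sum of products of elementary factors, where some factors are as before and others are the $\mu$-derivatives $\partial_\mu N$, $\partial_\mu Z$, and $f_1''(N)\,\partial_\mu N$. The formulas $\partial_\mu N = D/\bigl(D + \lambda_Z(D_2) f_1'(N)\bigr)$ and $\partial_\mu Z = (\gamma_1 \gamma_2/D_2)\,\lambda_Z(D_2) f_1'(N)\,\partial_\mu N$ from Theorem \ref{boundedness of Z_2} show that each of these $\mu$-derivative factors is itself a smooth combination of elementary factors. Under the hypothesis that $f_1$ is three times continuously differentiable and $f_2$ is twice continuously differentiable, each such factor is continuously differentiable in $(D_1, D_2)$ on $\Delta$, so the mean value argument of the first step applies again and gives an estimate of the form
\[
    |\partial_\mu Q(\mu, D_1, D_2) - \partial_\mu Q(\mu, D, D)| \leq c_{Q'}\, \Dist{(D_1, D_2)}{(D, D)}.
\]
A second application of Lemma \ref{elementary} and the triangle inequality then delivers \eqref{p1primebound}.

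The main obstacle is a careful bookkeeping of how many derivatives of $f_1$ and $f_2$ are consumed at each stage rather than any genuine analytic difficulty. Because $p_1$ already involves $f_1'(N)$, differentiating once in $\mu$ introduces a factor of $f_1''(N)$, and controlling the $(D_1, D_2)$-variation of that factor by the mean value theorem requires one further derivative of $f_1$, matching the hypothesis of three continuous derivatives; the parallel tally for $f_2$ yields the two continuous derivatives assumed in Proposition \ref{convergenceestimate}. With this accounting in place, combining the product-rule expansion of $p_1'$ with Lemma \ref{elementary} completes both bounds with explicit constants $C_1$ and $C_1'$.
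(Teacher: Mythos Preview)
Your proposal is correct and follows essentially the same strategy as the paper: expand $p_1=-a_2$ via \eqref{JE2coefficienta2} as a finite sum of products of elementary factors, obtain Lipschitz bounds on each factor, and combine via Lemma~\ref{elementary} and the triangle inequality; then repeat on the $\mu$-differentiated expansion for $p_1'$. The only difference is packaging: the paper isolates the individual factor bounds as the eight items of Proposition~\ref{constituentbounds} and proves each one by an explicit computation of the relevant $(D_1,D_2)$-partial derivatives, whereas you invoke the mean value theorem together with compactness of $I\times\Delta$ and the smoothness from Lemma~\ref{NZsmoothness} once and for all. Your abstract route is shorter; the paper's explicit route has the modest advantage of exhibiting the partial derivatives \eqref{PND1}, \eqref{PND2}, \eqref{PZD1}, \eqref{PZD2} directly, which makes the regularity accounting (three derivatives of $f_1$, two of $f_2$) visible term by term rather than inferred.
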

\begin{proposition}
  \label{p2bounds}
There are constants $C_2$ and $C_2'$ such that 
\begin{align}
  \abs{p_2(\mu, D_1, D_2) - p_2(\mu, D, D)} &\leq C_2\cdot \Dist{(D_1, D_2)}{(D, D)}, \label{p2bound}
\\
 \abs{p_2'(\mu, D_1, D_2) - p_2'(\mu, D, D)} &\leq C_2'\cdot \Dist{(D_1, D_2)}{(D, D)}, \label{p2primebound}
\end{align}
\end{proposition}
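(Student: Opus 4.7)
The plan is to show that $p_2(\mu, D_1, D_2) = -a_1(\mu, D_1, D_2)$, regarded as a function of $(D_1, D_2)$ with $\mu$ fixed, is locally Lipschitz uniformly in $\mu$, and similarly for its $\mu$-derivative $p_2'$. Both bounds will then follow from the mean value theorem applied to $(D_1, D_2) \mapsto p_2(\mu, D_1, D_2)$ and $(D_1, D_2) \mapsto p_2'(\mu, D_1, D_2)$ on a common compact neighborhood of $(D,D)$.

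First, I would recall from \eqref{JE2coefficienta1} that
\begin{equation*}
p_2(\mu, D_1, D_2) = -Z(\mu, D_1, D_2)\, f_2'\bigl(\lambda_Z(D_2)\bigr) - \lambda_Z(D_2) f_1'\bigl(N(\mu, D_1, D_2)\bigr) + \gamma_1 f_1\bigl(N(\mu, D_1, D_2)\bigr) - D_1 - D.
\end{equation*}
By Lemma \ref{NZsmoothness}, the functions $N(\mu, D_1, D_2)$ and $Z(\mu, D_1, D_2)$ are smooth to the degree of smoothness of $f_1$ and $f_2$; moreover, $\lambda_Z(D_2)$ is as smooth as $f_2$ by Lemma \ref{lambda_diff}. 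Under the hypothesis that $f_1 \in C^3$ and $f_2 \in C^2$, the composite $p_2$ is $C^2$ in $(\mu, D_1, D_2)$, since it involves $f_1$ and $f_1'$ (needing $f_1 \in C^2$ to differentiate), and $f_2'$ (needing $f_2 \in C^2$). Shrink the interval $I$ containing $\mu_{c_2}$ and the disc $\Delta$ about $(D,D)$ from Lemma \ref{NZsmoothness} to closed sets $\overline{I}$ and $\overline{\Delta}$; then on the compact set $\overline{I}\times\overline{\Delta}$ the continuous partial derivatives $\partial p_2/\partial D_1$ and $\partial p_2/\partial D_2$ attain a common bound $C_2$.

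Next, for fixed $\mu \in \overline{I}$, apply the mean value theorem to $(D_1, D_2) \mapsto p_2(\mu, D_1, D_2)$ along the segment joining $(D, D)$ and $(D_1, D_2) \in \overline{\Delta}$ to obtain \eqref{p2bound}. For \eqref{p2primebound}, differentiate the display for $p_2$ in $\mu$, using $\partial f_1/\partial \mu = f_1'(N)\,N'(\mu, D_1, D_2)$, etc.; this produces an expression for $p_2'(\mu, D_1, D_2)$ involving $f_1$, $f_1'$, $f_1''$, $f_2'$, $N$, $Z$, $N'$, and $Z'$. To apply the mean value theorem to this expression as a function of $(D_1, D_2)$, one further differentiation with respect to $D_1$ or $D_2$ is required, and this is precisely where the third derivative of $f_1$ and the second derivative of $f_2$ are consumed. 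Continuity of these mixed partials on $\overline{I}\times\overline{\Delta}$ then yields a uniform bound $C_2'$ by the same compactness argument.

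The main obstacle is really a bookkeeping one: one must verify that every occurrence of $f_1^{(k)}$ and $f_2^{(k)}$ that appears after the two differentiations (one in $\mu$ for $p_2'$, then one in $(D_1, D_2)$ for the Lipschitz bound) falls within the assumed regularity $f_1 \in C^3$, $f_2 \in C^2$. A small subtlety is that $\partial N/\partial D_2$ and $\partial Z/\partial D_2$, via the implicit function computation in Lemma \ref{NZsmoothness}, introduce $\lambda_Z'(D_2) = (\gamma_2 f_2'(\lambda_Z(D_2)))^{-1}$, whose further differentiation pulls in $f_2''$; this is exactly absorbed by the hypothesis $f_2 \in C^2$. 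Once all these terms are accounted for, the compactness of $\overline{I}\times\overline{\Delta}$ supplies the required uniform constants $C_2$ and $C_2'$, completing the proof.
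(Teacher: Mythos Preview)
Your proposal is correct; the underlying idea---mean value theorem in $(D_1,D_2)$ combined with compactness of a closed rectangle $\overline{I}\times\overline{\Delta}$ to get uniform Lipschitz constants---is exactly what drives the paper's proof as well. The difference is organizational. The paper does not apply the mean value theorem to $p_2$ directly; instead it expands the difference $p_2(\mu,D_1,D_2)-p_2(\mu,D,D)$ term by term from \eqref{JE2coefficienta1} and bounds each summand using the product-difference Lemma~\ref{elementary} together with the ``constituent bounds'' collected in Proposition~\ref{constituentbounds} (bounds on $\lambda_Z(D_2)-\lambda_Z(D)$, $f_1'(N(\mu,D_1,D_2))-f_1'(N(\mu,D,D))$, $Z(\mu,D_1,D_2)-Z(\mu,D,D)$, etc.). Those constituent bounds are themselves proved by the mean value theorem in $(D_1,D_2)$, so the paper is really executing your argument at a finer granularity, building block by building block.

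What each approach buys: your direct route is shorter and makes the role of the regularity hypotheses $f_1\in C^3$, $f_2\in C^2$ transparent in one place. The paper's modular route pays off because the same catalogue of constituent bounds is reused verbatim in the parallel proofs of Propositions~\ref{p0bounds} and~\ref{p1bounds}, so the overhead of stating Proposition~\ref{constituentbounds} is amortized across all three estimates. One small caveat on your write-up: asserting $p_2$ is $C^2$ in $(\mu,D_1,D_2)$ is slightly stronger than what the hypotheses give (the term $f_2'(\lambda_Z(D_2))$ is only $C^1$ in $D_2$ when $f_2\in C^2$), but, as you note in your final paragraph, what you actually need and use is that $p_2$ and $p_2'$ are each $C^1$ in $(D_1,D_2)$ with continuous partials on the compact set, and that does follow.
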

The proofs of propositions \ref{p0bounds}, \ref{p1bounds}, and \ref{p2bounds} depend in turn on a number of elementary bounds 
and estimates, given below in lemma \ref{f2primebound} and propositions \ref{DDbounds}, \ref{muD1D2bounds}, and \ref{constituentbounds}.
We give the quick proofs of lemma \ref{f2primebound} and propositions \ref{DDbounds} and \ref{muD1D2bounds}, because they are quite
short, postponing the proof of the many parts of proposition \ref{constituentbounds} to the end of the section.  After we state
these results, we prove propositions \ref{p0bounds}, \ref{p1bounds}, and \ref{p2bounds}.
\begin{lemma} \label{f2primebound}
  Given $D$, there is an interval $J$ containing $\lambda_Z(D)$ such that 
  \begin{equation*}
    f_2'(P) > f_2'\bigl(\lambda_Z(D)\bigr)/2 > 0
  \end{equation*}
for all $P \in J$.  Thus, for all $P \in J$, $f_2'(P)$ is bounded away from zero,
and, for all $D_2$ in the preimage $\lambda_Z^{-1}(J)$, $f_2'\bigl(\lambda_Z(D_2)\bigr)$ is bounded away from zero.
\end{lemma}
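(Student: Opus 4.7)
The plan is to exploit two facts already recorded earlier in the paper: the standing hypothesis that $f_2'(P) > 0$ for all $P \geq 0$, and the assumption (imposed for the proposition we are supporting) that $f_2$ is twice continuously differentiable, so that $f_2'$ is continuous. Since $f_2'(\lambda_Z(D)) > 0$, continuity of $f_2'$ at the point $\lambda_Z(D)$ immediately produces an open interval $J$ containing $\lambda_Z(D)$ on which $|f_2'(P) - f_2'(\lambda_Z(D))| < f_2'(\lambda_Z(D))/2$, and the lower half of this inequality gives
\begin{equation*}
f_2'(P) > f_2'(\lambda_Z(D))/2 > 0 \quad \text{for all } P \in J.
\end{equation*}

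For the second assertion, I would invoke Lemma \ref{lambda_diff}, which tells us that $\lambda_Z$ is as differentiable as $f_2$ (in particular continuous) on its domain. Continuity of $\lambda_Z$ at $D$ implies that the preimage $\lambda_Z^{-1}(J)$ is a neighborhood of $D$ in the domain of $\lambda_Z$. For any $D_2$ in this neighborhood, $\lambda_Z(D_2) \in J$, so by the first part $f_2'(\lambda_Z(D_2)) > f_2'(\lambda_Z(D))/2$, which is the stated uniform lower bound.

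There is no real obstacle here; this is a direct continuity argument. The only minor issue to flag is making sure the strengthened differentiability hypothesis on $f_2$ (assumed in Proposition \ref{convergenceestimate}) is what licenses the continuity of $f_2'$ used here, so the lemma should be read in that context. The payoff is that $f_2'(\lambda_Z(D_2))$ appears in the denominator via $\lambda_Z'(D_2) = (\gamma_2 f_2'(\lambda_Z(D_2)))^{-1}$ from \eqref{lambdaderivs}, and in several of the coefficient expressions in the Jacobian \eqref{JacobianE2}; having $f_2'(\lambda_Z(D_2))$ uniformly bounded away from zero on a neighborhood of $D$ is exactly what is needed to control these reciprocals when forming the Lipschitz-type bounds in propositions \ref{p0bounds}, \ref{p1bounds}, and \ref{p2bounds}.
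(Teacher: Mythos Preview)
Your proof is correct and follows essentially the same approach as the paper: both invoke $f_2'(\lambda_Z(D))>0$ and continuity of $f_2'$ to obtain the interval $J$ via the inequality $|f_2'(P)-f_2'(\lambda_Z(D))|<f_2'(\lambda_Z(D))/2$. Your treatment of the preimage statement and the remarks on how the bound is used downstream are more explicit than the paper's, which simply records the two-sided continuity inequality and stops.
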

\begin{proof}
  By assumption on $f_2$, $f_2'\bigl(\lambda_Z(D)\bigr) > 0$.  By continuity of $f_2'$, there is an interval $J$
containing $\lambda_Z(D)$ such that, for all $P \in J$, 
\begin{equation*}
 -f_2'\bigl(\lambda_Z(D)\bigr)/2 <  f_2'(P) - f_2'\bigl(\lambda_Z(D)\bigr) < f_2'\bigl(\lambda_Z(D)\bigr)/2.  \qedhere
\end{equation*}
\end{proof}
\begin{proposition}
  \label{DDbounds}
Each of the quantities
\begin{gather*}
  f_1\bigl(N(\mu, D, D)\bigr), \quad f_1'\bigl(N(\mu, D, D)\bigr), \quad Z(\mu, D, D), 
\\
\quad  f_1''\bigl(N(\mu, D, D)\bigr), \quad N'(\mu, D, D),  \quad \text{and} \quad Z'(\mu, D, D)
\end{gather*}
is bounded by some constant on the interval $I$.
\end{proposition}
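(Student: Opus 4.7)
The plan is to exploit the fact that $I$ is a bounded interval (as constructed in the proof of theorem \ref{stability_general}) together with the smoothness of the equilibrium locus guaranteed by lemma \ref{NZsmoothness}, and then derive bounds on the remaining quantities via the explicit formulas obtained in theorem \ref{boundedness of Z_2}.

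First I would observe that lemma \ref{NZsmoothness} provides continuity (indeed, differentiability of the same order as the minimum of the smoothness of $f_1$ and $f_2$) of the functions $\mu \mapsto N(\mu, D, D)$ and $\mu \mapsto Z(\mu, D, D)$ on $I$. Taking the closure if necessary, $I$ is compact, so the image $N(I, D, D)$ is a compact subset of $\bR$; let $K = [N_{\min}, N_{\max}]$ be an interval containing it. Since $f_1$ is three times continuously differentiable, the continuous functions $f_1$, $f_1'$, and $f_1''$ attain bounds on $K$; composing with the continuous function $\mu \mapsto N(\mu, D, D)$ gives bounds on $f_1(N(\mu, D, D))$, $f_1'(N(\mu, D, D))$, and $f_1''(N(\mu, D, D))$ over $\mu \in I$. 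Continuity of $Z(\mu, D, D)$ on the compact set $I$ handles the bound on $Z(\mu, D, D)$ directly; alternatively, one can invoke the boundedness conclusion of theorem \ref{boundedness of Z_2}.

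For the derivative bounds, I would use the explicit expressions derived in the proof of theorem \ref{boundedness of Z_2}. From differentiating \eqref{1steqnwithZ_2notzero} we have
\begin{equation*}
  N'(\mu, D, D) = \frac{D}{D + \lambda_Z(D) f_1'\bigl(N(\mu, D, D)\bigr)}.
\end{equation*}
Since $f_1'$ is continuous and positive, the denominator is continuous, positive, and bounded above (by the bound on $f_1'\circ N$ already established); the numerator $D$ is constant. Thus $N'(\mu, D, D)$ is a ratio of a constant over a continuous positive function on the compact set $I$, hence bounded. Similarly,
\begin{equation*}
   Z'(\mu, D, D) = (\gamma_1 \gamma_2/D) \cdot \lambda_Z(D) \cdot f_1'\bigl(N(\mu, D, D)\bigr) \cdot N'(\mu, D, D)
\end{equation*}
is a product of constants and bounded functions, hence bounded on $I$.

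There really is no deep obstacle here: every quantity listed is either (i) a continuous function of $\mu$ on the compact interval $I$, or (ii) a continuous function of such functions obtained through the composition of $f_1$, $f_1'$, $f_1''$ with $N(\cdot, D, D)$ and through the algebraic relations for $N'$ and $Z'$. The only thing that requires a small argument is keeping the denominator of $N'(\mu, D, D)$ bounded away from zero, which follows immediately from the positivity of $f_1'$ together with the additive constant $D > 0$. Thus the assembled bounds yield the constants needed elsewhere in the appendix.
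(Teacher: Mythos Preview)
Your argument is correct and follows the same underlying idea as the paper: each quantity is a continuous function of $\mu$ on the compact interval $I$, hence bounded. The paper's own proof is in fact the one-line version of what you wrote---it simply notes that each listed function is continuous on the closed interval $I$ and therefore bounded---whereas you unpack this by passing through the explicit formulas for $N'$ and $Z'$ from theorem \ref{boundedness of Z_2}; both routes arrive at the same conclusion for the same reason.
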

\begin{proof}
  Each of the listed functions is continuous on the closed interval $I$, so each one is bounded.
\end{proof}
\begin{proposition}  \label{muD1D2bounds}
  Each of the quantities 
  \begin{gather*}
    \lambda_Z(D_2) \quad f_1\bigl(N(\mu, D_1, D_2)\bigr) \quad f_1'\bigl(N(\mu, D_1, D_2)\bigr) \quad Z(\mu, D_1, D_2)
\\
  f_2'\bigl(\lambda_Z(D_2)\bigr) \quad f_1''\bigl(N(\mu, D_1, D_2)\bigr) \quad N'(\mu, D_1, D_2) \quad \text{and} \quad Z'(\mu, D_1, D_2)
  \end{gather*}
is bounded by some constant on the domain $I {\times} \Delta$.
\end{proposition}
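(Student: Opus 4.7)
The strategy exactly parallels the proof of Proposition~\ref{DDbounds}: every listed function is continuous on a compact set, so the extreme value theorem gives a bound. The new ingredient is verifying joint continuity in $(\mu, D_1, D_2)$ rather than just in $\mu$, which is where we invoke Lemma~\ref{NZsmoothness} and the hypothesis that $f_1$ is three times and $f_2$ is two times continuously differentiable.

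Fix the interval $I$ as a closed bounded interval containing $\mu_{c_2}$, and shrink $\Delta$ if necessary so that its closure $\overline{\Delta}$ is a closed disc about $(D,D)$ on which the parameterization guaranteed by Lemma~\ref{NZsmoothness} is defined. The product $I \times \overline{\Delta}$ is then compact. Since any continuous real-valued function on this compact set attains a maximum, it suffices to check that each of the eight listed expressions extends continuously to $I \times \overline{\Delta}$.

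First, $\lambda_Z$ is $C^2$ on its domain by Lemma~\ref{lambda_diff} (using that $f_2 \in C^2$), so $D_2 \mapsto \lambda_Z(D_2)$ is continuous on $\overline{\Delta}$, and $D_2 \mapsto f_2'\bigl(\lambda_Z(D_2)\bigr)$ is a continuous composition. By Lemma~\ref{NZsmoothness} together with the differentiability hypotheses, the maps $(\mu, D_1, D_2) \mapsto N(\mu, D_1, D_2)$ and $(\mu, D_1, D_2) \mapsto Z(\mu, D_1, D_2)$ are $C^2$ on $I \times \overline{\Delta}$; in particular the partial derivatives $N'(\mu, D_1, D_2)$ and $Z'(\mu, D_1, D_2)$ exist and are continuous there. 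Since $f_1 \in C^3$, the three functions $f_1$, $f_1'$, and $f_1''$ are all continuous, and composing with the continuous map $N(\cdot)$ yields continuity of $f_1(N(\mu, D_1, D_2))$, $f_1'(N(\mu, D_1, D_2))$, and $f_1''(N(\mu, D_1, D_2))$ on $I \times \overline{\Delta}$.

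With continuity of each expression on the compact set $I \times \overline{\Delta}$ established, the extreme value theorem furnishes a finite bound for each. Restricting back to $I \times \Delta$ completes the proof. The only subtle point is the bookkeeping of smoothness: we need $f_1''$ for the last entry involving $f_1''(N)$, and we need $f_2 \in C^2$ to ensure $\lambda_Z$ is $C^1$ (so that the partials of $N$ and $Z$ with respect to $D_2$, and hence their continuity, follow from the implicit function theorem); the hypotheses of Proposition~\ref{convergenceestimate} supply exactly this.
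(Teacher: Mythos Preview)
Your proof is correct and follows the same approach as the paper: the paper's own argument is the single sentence ``Each of the listed functions is continuous on the compact set $I\times\Delta$, so each one is bounded.'' You have simply fleshed out the continuity and compactness claims that the paper leaves implicit, invoking Lemma~\ref{NZsmoothness} and the smoothness hypotheses on $f_1$, $f_2$ to justify joint continuity, and shrinking $\Delta$ to make the closure compact.
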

\begin{proof}
  Each of the listed functions is continuous on the compact set $I{\times}\Delta$, so each one is bounded.
\end{proof}
The proof of the next result depends on many more details of system \eqref{NPZsys} and consequences drawn from them.  
Some steps in the proof are quite lengthy, so we postpone these details to the end of the section.
\begin{proposition}
  \label{constituentbounds}
Assume that the domain $I{\times}\Delta$ for which $N(\mu,D_1,D_2)$ and $Z(\mu,D_1,D_2)$ are
defined is a subset of $I{\times}\bR{\times}\lambda_Z^{-1}(J)$, where $J$ is as in lemma \ref{f2primebound}.
Then for $\mu \in I$ and $(D_1, D_2) \in \Delta$, these differences are bounded by constants times $\Dist{(D_1, D_2)}{(D, D)}$.
\begin{align*}
&\text{1.} \; \lambda_Z(D) - \lambda_Z(D_2). 
&\text{2.} \;  &f_1\bigl(N(\mu, D_1, D_2)\bigr) - f_1\bigl(N(\mu, D, D)\bigr).
\\
&\text{3.} \; f_1'\bigl(N(\mu, D_1, D_2)\bigr) - f_1'\bigl(N(\mu, D, D)\bigr). 
&\text{4.} \; &Z(\mu, D_1, D_2) - Z(\mu, D, D).
\\
&\text{5.} \; f_2'\bigl(\lambda_Z(D_2)\bigr) - f_2'\bigl(\lambda_Z(D)\bigr). 
&\text{6.} \; &f_1^{(2)}\bigl(N(\mu, D_1, D_2)\bigr) - f_1^{(2)}\bigl(N(\mu, D, D)\bigr).
\intertext{Moreover,}
&\text{7.}\; N'(\mu, D_1, D_2) - N'(\mu, D, D),   &\text{8.}\;  &Z'(\mu, D_1, D_2) - Z'(\mu, D, D),
\end{align*}
where the derivatives are taken with respect to $\mu$, are also bounded by constants times $\Dist{(D_1, D_2)}{(D, D)}$.
\end{proposition}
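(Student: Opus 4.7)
The plan is to establish the eight bounds by systematic application of the mean value theorem (MVT), chained together as needed, using the regularity assumptions on $f_1$ and $f_2$.

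First I would establish an auxiliary pair of Lipschitz bounds on $N$ and $Z$. By Lemma \ref{NZsmoothness}, $N(\mu, D_1, D_2)$ and $Z(\mu, D_1, D_2)$ are continuously differentiable on $I \times \Delta$, so (after shrinking $\Delta$ to have compact closure) the partial derivatives $\partial N/\partial D_i$ and $\partial Z / \partial D_i$ are continuous and uniformly bounded there. Applying MVT to $(D_1, D_2) \mapsto N(\mu, D_1, D_2)$ with $\mu$ fixed gives
\[
|N(\mu, D_1, D_2) - N(\mu, D, D)| \leq K_N \cdot \Dist{(D_1, D_2)}{(D, D)},
\]
uniformly in $\mu \in I$, and similarly for $Z$; this last statement is already claim (4). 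Bound (1) then follows by MVT on $\lambda_Z$: Lemma \ref{lambda_diff} gives $\lambda_Z'(D_2) = (\gamma_2 f_2'(\lambda_Z(D_2)))^{-1}$, and Lemma \ref{f2primebound} together with the standing domain assumption ensures $f_2'(\lambda_Z(D_2))$ is bounded away from zero on $\Delta$.

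Next, claims (2), (3), (5), and (6) all reduce to MVT applied to a $C^1$ function of one variable, composed with one of the bounds already in hand. For instance,
\[
|f_1(N(\mu, D_1, D_2)) - f_1(N(\mu, D, D))| \leq \bigl(\sup_K |f_1'|\bigr) \cdot |N(\mu, D_1, D_2) - N(\mu, D, D)|,
\]
where $K$ is a compact interval containing the range of $N$ on $I \times \Delta$; the preliminary bound then delivers (2). Claim (3) uses $\sup |f_1''|$, claim (6) uses $\sup |f_1'''|$ (this is precisely where the hypothesis $f_1 \in C^3$ becomes essential), and (5) uses $\sup |f_2''|$ chained with (1). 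The regularity hypotheses on $f_1$ and $f_2$ are exactly what is needed to make all these suprema finite on the compact sets at hand.

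For claims (7) and (8), I would start from the explicit formulas obtained by differentiating \eqref{1steqnwithZ_2notzero} and \eqref{2ndeqnwithZ_2notzero} with respect to $\mu$ as in Theorem \ref{boundedness of Z_2}:
\[
N'(\mu, D_1, D_2) = \frac{D}{D + \lambda_Z(D_2) f_1'\bigl(N(\mu, D_1, D_2)\bigr)},
\]
\[
Z'(\mu, D_1, D_2) = \frac{\gamma_1 \gamma_2}{D_2}\, \lambda_Z(D_2)\, f_1'\bigl(N(\mu, D_1, D_2)\bigr)\, N'(\mu, D_1, D_2).
\]
Since $\lambda_Z, f_1' > 0$, the denominator of $N'$ is bounded below by $D > 0$, so the map $x \mapsto D/(D+x)$ is Lipschitz on its range; repeated application of Lemma \ref{elementary} to these products and quotients, using bounds (1), (3), and the trivial Lipschitz behavior of $1/D_2$, yields (7), and then (8) follows using (7). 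The main obstacle will be coordinating the various compact neighborhoods so that simultaneously $\lambda_Z(D_2) \in J$, $N(\mu, D_1, D_2)$ stays in a fixed compact interval, and the relevant denominators remain bounded away from zero, for all $\mu \in I$ and all $(D_1, D_2)$ in a sufficiently small $\Delta$. Once these compatibility conditions are arranged, the remainder is a mechanical chain of MVT applications combined with Lemma \ref{elementary}.
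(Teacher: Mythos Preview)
Your proposal is correct and follows essentially the same architecture as the paper: systematic use of the mean value theorem, the explicit formulas for $N'$ and $Z'$ obtained from differentiating \eqref{1steqnwithZ_2notzero}--\eqref{2ndeqnwithZ_2notzero}, and repeated invocation of Lemma~\ref{elementary} to handle products.

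The one noteworthy difference is in how the Lipschitz bounds on $N$ and $Z$ themselves (your ``auxiliary pair'') are obtained. You invoke Lemma~\ref{NZsmoothness} plus compactness of $I\times\overline{\Delta}$ to bound $\partial N/\partial D_i$ and $\partial Z/\partial D_i$ abstractly. The paper instead \emph{computes} these partials explicitly by differentiating the defining relations $G_1=0$ and $G_2=0$, finding for instance $\partial N/\partial D_1 = 0$ and an explicit rational expression for $\partial N/\partial D_2$ whose denominator is bounded below by~$D$. Your route is shorter and perfectly adequate for the stated proposition; the paper's explicit computation buys slightly more, namely bounds that are visibly uniform in~$\mu$ without appealing to joint continuity on a product domain, and concrete expressions that could in principle be used to estimate the constants. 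For bounds~(7) and~(8) the two approaches coincide exactly.
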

\begin{proof}[Proof of proposition \ref{p0bounds}]
After some reorganization, we have from \eqref{JE2coefficienta3}
\begin{multline}  \label{p0difference}
  p_0(\mu, D_1, D_2) - p_0(D,D,\mu) = -a_3(\mu, D_1, D_2) + a_3(D,D,\mu) 
\\
    =  -D_2 f_2'\bigl(\lambda_Z(D_2)\bigr) Z(\mu, D_1, D_2)\Bigl( D + \lambda_Z(D_2) f_1'\bigl(N(\mu, D_1, D_2)\bigr)\Bigr) 
\\
                    + D f_2'\bigl(\lambda_Z(D)\bigr) Z(\mu, D, D)\Bigl( D + \lambda_Z(D) f_1'\bigl(N(\mu, D, D)\bigr) \Bigr)
\\
   = \Bigl( D f_2'\bigl(\lambda_Z(D)\bigr) Z(\mu, D, D) -D_2 f_2'\bigl(\lambda_Z(D_2)\bigr) Z(\mu, D_1, D_2)\Bigr) D
\\
             + \Bigl[ D f_2'\bigl(\lambda_Z(D)\bigr) Z(\mu, D, D) \lambda_Z(D) f_1'\bigl(N(\mu, D, D)\bigr) 
\\
                   - D_2 f_2'\bigl(\lambda_Z(D_2)\bigr) Z(\mu, D_1, D_2) \lambda_Z(D_2) f_1'\bigl(N(\mu, D_1, D_2)\bigr)\Bigr].
\end{multline}
To bound $\abs{p_0(\mu, D_1, D_2) - p_0(D,D,\mu)}$, we bound the absolute values of summands in \eqref{p0difference} as follows. 
First, note $\abs{D{-}D_2} \leq \Dist{(D_1, D_2)}{(D, D)}$, so we bound
\begin{equation*}
\abs{  D f_2'\bigl(\lambda_Z(D)\bigr) Z(\mu, D, D) - D_2 f_2'\bigl(\lambda_Z(D_2)\bigr) Z(\mu, D_1, D_2)}
\end{equation*}
by using lemma \ref{elementary}, lemma \ref{DDbounds}, and lemma \ref{muD1D2bounds} 
to combine the noted bound with bounds 4 and 5 from proposition \ref{constituentbounds}; bound
\begin{multline*}
  \abs{D f_2'\bigl(\lambda_Z(D)\bigr) Z(\mu, D, D) \lambda_Z(D) f_1'\bigl(N(\mu, D, D)\bigr) 
                  \\
              - D_2 f_2'\bigl(\lambda_Z(D_2)\bigr) Z(\mu, D_1, D_2) \lambda_Z(D_2) f_1'\bigl(N(\mu, D_1, D_2)\bigr)}
\end{multline*}
using lemma \ref{elementary} to  combine bounds 1, 3, 4 and 5 with $\abs{D{-}D_2} \leq \Dist{(D_1, D_2)}{(D, D)}$.
We compute from \eqref{p0difference}
\begin{multline*}
  p_0'(\mu, D_1, D_2) - p_0'(\mu, D, D)
\\ 
= \Bigl( D f_2'\bigl(\lambda_Z(D)\bigr) Z'(\mu, D, D) -D_2 f_2'\bigl(\lambda_Z(D_2)\bigr) Z'(\mu, D_1, D_2)\Bigr) D
\\
\shoveleft{\quad \quad \quad + \Bigl[ D f_2'\bigl(\lambda_Z(D)\bigr) Z'(\mu, D, D) \lambda_Z(D) f_1'\bigl(N(\mu, D, D)\bigr)}
\\
            - D_2 f_2'\bigl(\lambda_Z(D_2)\bigr) Z'(\mu, D_1, D_2) \lambda_Z(D_2) f_1'\bigl(N(\mu, D_1, D_2)\bigr)\bigr]
\\
              +\bigl[ D f_2'\bigl(\lambda_Z(D)\bigr) Z(\mu, D, D) \lambda_Z(D) f_1^{(2)}\bigl(N(\mu, D, D)\bigr) N'(\mu, D, D)
\\
                 - D_2 f_2'\bigl(\lambda_Z(D_2)\bigr) Z(\mu, D_1, D_2) \lambda_Z(D_2) f_1^{(2)}\bigl(N(\mu, D_1, D_2)\bigr)N'(\mu, D_1, D_2) \Bigr].
\end{multline*}  
For $\abs{p_0'(\mu, D_1, D_2) - p_0'(D,D,\mu)}$, we bound from the first line of the expansion
\begin{equation*} 
  \abs{D f_2'\bigl(\lambda_Z(D)\bigr) Z'(\mu, D, D) - D_2 f_2'\bigl(\lambda_Z(D_2)\bigr) Z'(\mu, D_1, D_2)}
\end{equation*}
by combining bounds 5 and 8 with the bound $\abs{D{-}D_2} \leq \Dist{(D_1, D_2)}{(D, D)}$; bound from the second and third lines
\begin{multline*}
\abs{D f_2'\bigl(\lambda_Z(D)\bigr) Z'(\mu, D, D) \lambda_Z(D) f_1'\bigl(N(\mu, D, D)\bigr) 
            \\- D_2 f_2'\bigl(\lambda_Z(D_2)\bigr) Z'(\mu, D_1, D_2) \lambda_Z(D_2) f_1'\bigl(N(\mu, D_1, D_2)\bigr)}
  \end{multline*}
by combining bounds 1, 3, 5, and 8 with the bound $\abs{D{-}D_2} \leq \Dist{(D_1, D_2)}{(D, D)}$; bound from the fourth and fifth lines
\begin{multline*}
\abs{D f_2'\bigl(\lambda_Z(D)\bigr) Z(\mu, D, D) \lambda_Z(D) f_1^{(2)}\bigl(N(\mu, D, D)\bigr) N'(\mu, D, D)
             \\- D_2 f_2'\bigl(\lambda_Z(D_2)\bigr) Z(\mu, D_1, D_2) \lambda_Z(D_2) f_1^{(2)}\bigl(N(\mu, D_1, D_2)\bigr)N'(\mu, D_1, D_2)}  
\end{multline*}
by combining bounds 1, 4, 5, 6, and 7 from proposition \ref{constituentbounds} 
with the bound $\abs{D{-}D_2} \leq \Dist{(D_1, D_2)}{(D, D)}$.
\end{proof}
\begin{proof}[Proof of proposition \ref{p1bounds}]
After some reorganization, placing terms belonging to $p_1(\mu, D_1, D_2)$ down the left side of the display, 
we have from \eqref{JE2coefficienta2}
\begin{multline}
 \label{p1difference}
 p_1(\mu, D_1, D_2) - p_1(D,D,\mu)  =  -a_2(\mu, D_1, D_2) + a_2(D,D,\mu)
\\
 \shoveleft{\quad =-\lambda_{Z}(D_2) Z(\mu, D_1, D_2) f_1'\bigl(N(\mu, D_1, D_2)\bigr) f_2'\bigl(\lambda_{Z}(D_2)\bigr)}
\\
\shoveright{+\lambda_{Z}(D) Z(\mu, D, D) f_1'\bigl(N(\mu, D, D)\bigr) f_2'\bigl(\lambda_{Z}(D)\bigr)}
\\
\qquad \quad - D_2Z(\mu, D_1, D_2) f_2'\bigl(\lambda_{Z}(D_2)\bigr) +  D Z(\mu, D, D) f_2'\bigl(\lambda_{Z}(D)\bigr)
\\
\qquad \qquad  - D Z(\mu, D_1, D_2) f_2'\bigl(\lambda_{Z}(D_2)\bigr) + D Z(\mu, D, D) f_2'\bigl(\lambda_{Z}(D)\bigr)
\\
\qquad \qquad \quad - D_{1}\lambda_{Z}(D_2)f_1'\bigl(N(\mu, D_1, D_2)\bigr) + D \lambda_{Z}(D)f_1'\bigl(N(\mu, D, D)\bigr)
\\
\shoveright{ + D \gamma_{1}f_{1}\bigl(N(\mu, D_1, D_2)\bigr)  - D \gamma_{1}f_{1}(N(\mu, D, D)\bigr)}
\\
- D D_{1} + D^2. 
\end{multline}
To bound $\abs{p_1(\mu, D_1, D_2) - p_1(D,D,\mu)}$ by a constant multiple of $\Dist{(D_1, D_2)}{(D,D)}$, 
first observe that both $\abs{D{-}D_1}$ and $\abs{D{-}D_2}$ are bounded by $\Dist{(D_1, D_2)}{(D,D)}$.
Similarly bound 
\begin{multline*}
\abs{-\lambda_{Z}(D_2) Z(\mu, D_1, D_2) f_1'\bigl(N(\mu, D_1, D_2)\bigr) f_2'\bigl(\lambda_{Z}(D_2)\bigr) 
\\
          +    \lambda_{Z}(D) Z(\mu, D, D) f_1'\bigl(N(\mu, D, D)\bigr) f_2'\bigl(\lambda_{Z}(D)\bigr)}
\end{multline*}
by combining bounds 1, 4, 3, and 5 from proposition \ref{constituentbounds}; bound
\begin{align*}
   \abs{-D_2 Z(\mu, D_1, D_2) f_2'\bigl(\lambda_{Z}(D_2)\bigr) &+   D Z(\mu, D, D) f_2'\bigl(\lambda_{Z}(D)\bigr)}
\intertext{by combining bounds 4 and 5 with  $\abs{D{-}D_2} \leq \Dist{(D_1, D_2)}{(D,D)}$;  bound}
\abs{- D Z(\mu, D_1, D_2) f_2'\bigl(\lambda_{Z}(D_2)\bigr) &+ D Z(\mu, D, D) f_2'\bigl(\lambda_{Z}(D)\bigr)}
\intertext{by combining bounds 4 and 5; bound}
\abs{- D_1\lambda_{Z}(D_2)f_1'\bigl(N(\mu, D_1, D_2)\bigr) &+ D \lambda_{Z}(D)f_1'\bigl(N(\mu, D, D)\bigr)}
\intertext{by combining bounds 1 and 3 with $\abs{D{-}D_1}\leq \Dist{(D_1, D_2)}{(D,D)}$; and}
\abs{D \gamma_{1}f_{1}\bigl(N(\mu, D_1, D_2)\bigr) &- D \gamma_{1}f_{1}\bigl(N(\mu, D, D)\bigr)}
\end{align*}
is taken care of in bound 2 of proposition \ref{constituentbounds}.  Finally, $\abs{-DD_1 + D^2}$ has
already been taken care of.  Adding all these bounds,
$\abs{p_1(\mu, D_1, D_2) - p_1(D,D,\mu)}$ is bounded by a constant multiple of $\Dist{(D_1, D_2)}{(D,D)}$.

Rather than exhibit a complete formula for  $p_1'(\mu, D_1, D_2) - p_1'(D,D,\mu)$,
we pick apart equation \eqref{p1difference} to express this difference as a sum of expressions.
From the first two lines, 
\begin{multline} \label{firsttwolines}
-\lambda_{Z}(D_2) Z'(\mu, D_1, D_2) f_1'\bigl(N(\mu, D_1, D_2)\bigr) f_2'\bigl(\lambda_{Z}(D_2)\bigr) 
\\
            -\lambda_{Z}(D_2) Z(\mu, D_1, D_2) f_1^{(2)}\bigl(N(\mu, D_1, D_2)\bigr) N'(\mu, D_1, D_2) f_2'\bigl(\lambda_{Z}(D_2)\bigr)
\\
+\lambda_{Z}(D) Z'(\mu, D, D) f_1'\bigl(N(\mu, D, D)\bigr) f_2'\bigl(\lambda_{Z}(D)\bigr) 
\\
+\lambda_{Z}(D) Z(\mu, D, D) f_1^{(2)}\bigl(N(\mu, D, D)\bigr) N'(\mu, D, D) f_2'\bigl(\lambda_{Z}(D)\bigr)
\end{multline}
is involved in the sum. From lines three through six, the expressions involved are
\begin{gather}
   - D_2Z'(\mu, D_1, D_2) f_2'\bigl(\lambda_{Z}(D_2)\bigr) +  DZ'(\mu, D, D) f_2'\bigl(\lambda_{Z}(D)\bigr),  \label{linethree}
\\
- D Z'(\mu, D_1, D_2) f_2'\bigl(\lambda_{Z}(D_2)\bigr) + D Z'(\mu, D, D) f_2'\bigl(\lambda_{Z}(D)\bigr), \label{linefour}
\\
- D_{1}\lambda_{Z}(D_2)f_1^{(2)}\bigl(N(\mu, D_1, D_2)\bigr)N'(\mu, D_1, D_2) 
              + D \lambda_{Z}(D)f_1^{(2)}\bigl(N(\mu, D, D)\bigr)N'(\mu, D, D), \label{linefive}
\\
 D \gamma_{1}f_{1}'\bigl(N(\mu, D_1, D_2)\bigr)N'(\mu, D_1, D_2)  - D \gamma_{1}f_{1}'\bigl(N(\mu, D, D)\bigr)N'(\mu, D, D). \label{linesix}
\end{gather}
Making several applications of lemma \ref{elementary}, lemma \ref{DDbounds}, lemma \ref{muD1D2bounds}, and proposition \ref{constituentbounds},
we find that the absolute value of each of the quantities displayed in 
\eqref{firsttwolines}, \eqref{linethree}, \eqref{linefour}, \eqref{linefive}, and \eqref{linesix} is bounded by a constant times
$\Dist{(D_1, D_2)}{(D, D)}$. Consequently, there is a constant $C_1'$ such that
\begin{equation*} 
  \abs{ p_1'(\mu, D_1, D_2) - p_1'(D,D,\mu)  }  \leq C_1' \cdot \Dist{(D_1, D_2)}{(D, D)}. \qedhere
\end{equation*}  
\end{proof}
\begin{proof}[Proof of proposition \ref{p2bounds}]
Using the formula \eqref{JE2coefficienta1} and organizing the difference $p_2(\mu, D_1, D_2) - p_2(\mu, D, D)$ to display
terms belonging to $p_2(\mu, D_1, D_2)$ down the left side of the display, we have
\begin{multline} \label{p2difference}
  p_2(\mu, D_1, D_2) - p_2(\mu, D, D) = -a_1(D_1 , D_2, \mu) + a_1(\mu, D, D) 
  \\
= - Z(\mu, D_1, D_2) f_2'\bigl(\lambda_{Z}(D_2)\bigr) +  Z(\mu, D, D) f_2'\bigl(\lambda_{Z}(D)\bigr)
\\
\qquad  \quad -\lambda_{Z}(D_2)f_1'\bigl(N(\mu, D_1, D_2)\bigr) + \lambda_{Z}(D)f_1'\bigl(N(\mu, D, D)\bigr)
 \\ 
\qquad \qquad  +\gamma_{1}f_{1}\bigl(N(\mu, D_1, D_2)\bigr)- \gamma_{1}f_{1}\bigl(N(\mu, D, D)\bigr)
\\
 - D_{1} + D.    \qquad \qquad \qquad         
\end{multline}
The bound on $\abs{p_2(\mu, D_1, D_2) - p_2(\mu, D, D)}$ in \eqref{p2bound} follows from bounds on summands in \eqref{p2difference}, as follows.
Apply lemma \ref{elementary}, lemma \ref{DDbounds}, lemma \ref{muD1D2bounds}, and bounds 4 and 5 from proposition \ref{constituentbounds}
to bound the term 
\begin{align*}
  \bigabs{ - Z(\mu, D_1, D_2) f_2'\bigl(\lambda_{Z}(D_2)\bigr) &+  Z(\mu, D, D) f_2'\bigl(\lambda_{Z}(D)\bigr)}; 
\intertext{bound}
\bigabs{-\lambda_{Z}(D_2)f_1'\bigl(N(\mu, D_1, D_2)\bigr) &+ \lambda_{Z}(D)f_1'\bigl(N(\mu, D, D)\bigr)}
\intertext{in the same manner, using bounds 1 and 3 from proposition \ref{constituentbounds}. Then bound}
 \bigabs{ \gamma_{1}f_{1}\bigl(N(\mu, D_1, D_2)\bigr) &- \gamma_{1}f_{1}\bigl(N(\mu, D, D)\bigr)}
\end{align*}
 using bound 2 from proposition \ref{constituentbounds}.
Finally, $\abs{ - D_1 + D}$ is bounded by $\Dist{(D_1, D_2)}{(D, D)}$.

From \eqref{p2difference}, 
\begin{multline} \label{p2primedifference}
   p_2'(\mu, D_1, D_2) - p_2'(\mu, D, D) = 
\\
- Z'(\mu, D_1, D_2) f_2'\bigl(\lambda_{Z}(D_2)\bigr) +  Z'(\mu, D, D) f_2'\bigl(\lambda_{Z}(D)\bigr)
\\
   -\lambda_{Z}(D_2)f_1^{(2)}\bigl(N(\mu, D_1, D_2)\bigr)\cdot N'(\mu, D_1, D_2) + \lambda_{Z}(D)f_1^{(2)}\bigl(N(\mu, D, D)\bigr)\cdot N'(\mu, D, D)
\\
    +\gamma_{1}f_{1}'\bigl(N(\mu, D_1, D_2)\bigr)\cdot N'(\mu, D_1, D_2)- \gamma_{1}f_{1}'\bigl(N(\mu, D, D)\bigr)\cdot N'(\mu, D, D).
\end{multline}  
For \eqref{p2primebound}, the same sort of building block approach on terms in \eqref{p2primedifference} using 
bounds 5 and 8  from proposition \ref{constituentbounds}, then bounds 1, 6, and 7, 
and finally bounds 3 and 7 delivers the bound \eqref{p2primebound}.
\end{proof}
Now we embark on the proof of proposition \ref{constituentbounds}.  Part of this work is made easier by the fact that the
rate functions in \eqref{NPZsys} are explicitly linear in $D$, $D_1$, and $D_2$.  On the other hand, because other quantities
such as $N$, $P$, and $Z$
depend implicitly on  $D$, $D_1$, and $D_2$, the details of the analyses are somewhat lengthy.
\begin{proof}[Proof of bound 1]
 By definition and the mean value theorem applied to $f_2$,  we have
  \begin{equation*}
   (D_2 - D)/\gamma_2 = f_2( \lambda_Z(D_2)\bigr) - f_2\bigl(\lambda_Z(D)\bigr) = f_2'(P_1)\cdot\bigl(\lambda_Z(D_2) - \lambda_Z(D)\bigr)
  \end{equation*}
for some number $P_1$ between $\lambda_Z(D_2)$ and $\lambda_Z(D)$. Consequently, 
\begin{equation*}
\abs{\lambda_Z(D_2) - \lambda_Z(D)} = \frac{\abs{D_2-D}}{\gamma_2\cdot f_2'(P_1)} 
   \leq \frac{\Dist{(D_1, D_2)}{(D, D)}}{\gamma_2\cdot f_2'(P_1)}. 
\end{equation*}
Since $P_1$ is also close to $\lambda_Z(D)$,  we may assume by lemma \ref{f2primebound} that
$f_2'(P_1) > f_2'\bigl(\lambda_Z(D)\bigr)/2$. 
Thus, for $D_2$ sufficiently close to $D$
\begin{equation*}
  \abs{\lambda_Z(D_2) - \lambda_Z(D)}
   \leq \frac{\Dist{(D_1, D_2)}{(D, D)}}{\gamma_2\cdot f_2'(P_1)}
< \frac{2 \cdot \Dist{(D_1, D_2)}{(D, D)}}{\gamma_2\cdot f_2'\bigl(\lambda_Z(D)\bigr)} \qedhere
\end{equation*}
\end{proof}
\begin{proof}
  [Proof of bound 2] Fix $\mu$ in the interval $I$.
We may use the mean-value theorem  \cite[p.103, Corollary~1]{LangAnalysisII} for functions of variables $(D_1, D_2)$, obtaining
\begin{equation*}
  \bigabs{f_1\bigl(N(\mu, D_1, D_2)\bigr) - f_1\bigl(N(\mu, D, D)\bigr)} 
              \leq \bignorm{\nabla (f_1 \com N)(\widehat{D}_1, \widehat{D}_2)}\cdot \Dist{(D_1, D_2)}{(D, D)},
\end{equation*}
where $(\widehat{D}_1, \widehat{D}_2)$ is a point on the line segment connecting $(D_1, D_2)$ and $(D,D)$. Therefore, we have to bound
the magnitude of the gradient $\norm{\nabla (f_1 \com N)(\widehat{D}_1, \widehat{D}_2)}$ in a disc surrounding $(D,D)$ by a constant.

To obtain information about the partial derivatives 
$\partial(f_1 \com N)/\partial D_1 = f_1'(N)\cdot (\partial N / \partial D_1)$ 
and 
$\partial(f_1 \com N)/\partial D_2 = f_1'(N)\cdot (\partial N / \partial D_2)$,
we return to the defining equation 
\begin{equation*}
   0 =G_1(N, Z, \mu, D_1, D_2) = D(\mu - N ) - f_1(N)\lambda_Z(D_2)
\end{equation*}
  and differentiate with respect to $D_1$ and $D_2$. We obtain
  \begin{align*}
    0 &= \partial G_1/ \partial D_1 
         = \partial G_1/\partial N \cdot \partial N/ \partial D_1 
       \\
    &= \bigl( -D  - f_1'(N) \cdot \lambda_Z(D_2)\bigr) \cdot (\partial N / \partial D_1) 
\intertext{and}
0 &= \partial G_1/ \partial D_2 
         = \partial \bigl(D(\mu{-}N )\bigr) / \partial D_2  - \partial \bigl(  f_1(N)\lambda_Z(D_2)\bigr)/\partial D_2
 \\
  &= -D \cdot  (\partial N / \partial D_2) - \bigl( f_1'(N)\cdot (\partial N /\partial D_2) \cdot \lambda_Z(D_2) + f_1(N) \lambda_Z'(D_2)\bigr).
  \end{align*}
From the first of these equations 
\begin{align}
  \frac{\partial N}{\partial D_1}(\mu, D_1, D_2) &= 0, \label{PND1}
\intertext{since $D + f_1'\bigl(N(\mu, D_1, D_2)\bigr)\cdot \lambda_Z(D_2) > 0$, and from the second}
\frac{\partial N}{\partial D_2}(\mu, D_1, D_2) 
      &= - \frac{f_1\bigl(N(\mu, D_1, D_2)\bigr) \lambda_Z'(D_2)}{D + f_1'\bigl(N(\mu, D_1, D_2)\bigr)\cdot \lambda_Z(D_2)}\label{PND2}
\end{align}
Now we obtain a bound on the gradient via
\begin{align} 
\biggabs{\frac{\partial N}{\partial D_2}(\mu, D_1, D_2)} 
            &= \frac{f_1\bigl(N(\mu, D_1, D_2)\bigr) \lambda_Z'(D_2)}{D + f_1'\bigl(N(\mu, D_1, D_2)\bigr)\cdot \lambda_Z(D_2)}
\\ 
&< \frac{f_1\bigl((N(\mu, D_1, D_2)\bigr) \lambda_Z'(D_2)}{D} 
          = \frac{f_1\bigl((N(\mu, D_1, D_2)\bigr)}{D \cdot \gamma_2 \cdot f_2'\bigl(\lambda_Z(D_2)\bigr)},  \notag
\intertext{since $f_1'\bigl(N(\mu, D_1, D_2)\bigr)\cdot \lambda_Z(D_2) > 0$ and because the defining relation
$  f_2\bigl(\lambda_Z(D_2)\bigr) = D_2/\gamma_2$ implies $\lambda_Z'(D_2) = 1/\bigl( \gamma_2 \cdot f_2'\bigl(\lambda_Z(D_2)\bigr) \bigr)$,}
&< \frac{2\cdot f_1\bigl((N(\mu, D_1, D_2)\bigr)}{D \cdot \gamma_2 \cdot f_2'\bigl(\lambda_Z(D)\bigr)},  \label{PND2bound}
\end{align}
since  $f'_2\bigl(\lambda_Z(D_2)\bigr) > f_2'\bigl(\lambda_Z(D)\bigr)/2$ by choice of $\Delta$ and lemma \ref{f2primebound}.
Also, $f_1\bigl((N(\mu, D_1, D_2)\bigr)$ is bounded by
$\lim_{N \rightarrow \infty}f_1(N)$, since $N(\mu, D_1,D_2)$ is unbounded as $\mu$ tends to infinity according to theorem \ref{boundedness of Z_2}. 
Thus, $\abs{\partial N/\partial D_2(\mu, D_1, D_2)}$ is bounded by a constant in the disc $\Delta$. 

We also observe that $f_1'\bigl(N(\mu, \widehat{D}_1, \widehat{D}_2)\bigr)$ is bounded by a constant depending only on $I{\times}\Delta$,
because of  the convexity of the closed disc $\Delta$ centered at $(D,D)$ from which
we choose $(D_1, D_2)$.  

Combining all this information, $\norm{\nabla (f_1 \com N)(\widehat{D}_1, \widehat{D}_2)}$ is bounded by a constant depending
on $I{\times}\Delta$. Therefore, $\bigabs{f_1\bigl(N(\mu, D_1, D_2)\bigr) - f_1\bigl(N(\mu, D, D)\bigr)} $ is bounded by a constant times
$\Dist{(D_1, D_2)}{(D, D)}$.
\end{proof}
\begin{proof}
  [Proof of bound 3]
We again use the mean-value theorem  \cite[p.103, Corollary~1]{LangAnalysisII} for functions of variables $(D_1, D_2)$, obtaining
\begin{equation*}
  \bigabs{f'_1\bigl(N(\mu, D_1, D_2)\bigr) - f'_1\bigl(N(\mu, D, D)\bigr)} 
       \leq \bignorm{\nabla (f'_1 \com N)(\widehat{D}_1, \widehat{D}_2)}\cdot \Dist{(D_1, D_2)}{(D, D)},
\end{equation*}
where $(\widehat{D}_1, \widehat{D}_2)$ is a point on the line segment connecting $(D_1, D_2)$ and $(D,D)$. Therefore, we have to bound
the magnitude of the gradient $\norm{\nabla (f'_1 \com N)(\widehat{D}_1, \widehat{D}_2)}$ in a disc surrounding $(D,D)$.

To obtain information about the partial derivatives 
$\partial(f_1' \com N)/\partial D_1 = f_1^{(2)}(N)\cdot (\partial N / \partial D_1)$ 
and 
$\partial(f_1' \com N)/\partial D_2 = f_1^{(2)}(N)\cdot (\partial N / \partial D_2)$,
we cite \eqref{PND1} for the vanishing of $\partial N/\partial D_1$ 
and the bound on $\partial N/\partial D_2$ obtained in \eqref{PND2bound}.
We also observe that $f_1^{(2)}\bigl(N(\mu, \widehat{D}_1, \widehat{D}_2)\bigr)$ is bounded by a constant, assuming a continuous second
derivative of $f_1$.
The constant depends on the $\mu$-interval $I$
and the closed disc $\Delta$ centered at $(D,D)$ from which
we choose $(D_1, D_2)$, plus the convexity of the disc.

Combining all this information, $\norm{\nabla (f_1' \com N)(\widehat{D}_1, \widehat{D}_2)}$ is bounded by a constant depending
on $I{\times}\Delta$. Therefore, $\bigabs{f_1'\bigl(N(\mu, D_1, D_2)\bigr) - f_1'\bigl(N(\mu, D, D)\bigr)} $ is bounded by a constant times
$\Dist{(D_1, D_2)}{(D, D)}$.
\end{proof}
\begin{proof}
  [Proof of bound 4]
Since $\mu$ is a fixed number in $I$, we use again the mean value theorem for functions of $(D_1, D_2)$.
\begin{equation*}
  \abs{Z(\mu, D_1, D_2) - Z(\mu, D, D)} \leq \bignorm{\nabla Z(\widehat{D}_1, \widehat{D}_2)}\cdot \Dist{(D_1, D_2)}{(D, D)},
\end{equation*}
where $(\widehat{D}_1, \widehat{D}_2)$ is a point on the line segment connecting $(D_1, D_2)$ and $(D,D)$. Therefore, we have to bound
the magnitude of the gradient $\norm{\nabla Z(\widehat{D}_1, \widehat{D}_2)}$ in a disc surrounding $(D,D)$.

To obtain information about $\partial Z/\partial D_1$ and $\partial Z/ \partial D_2$ we return to the defining equation
\begin{equation*}
 0 = G_2(N, Z, \mu, D_1, D_2)  = \gamma_1f_1(N)\lambda_Z(D_2) - D_1\lambda_Z(D_2) - (D_2/\gamma_2) Z.
\end{equation*}
Differentiating with respect to $D_1$, we obtain
\begin{align*}
  0 &= \partial G_2/\partial D_1
\\  & = \partial \bigl(\gamma_1f_1(N)\lambda_Z(D_2) \bigr)/ \partial D_1 
                                 - \partial \bigl( (D_2/\gamma_2) Z \bigr)/ \partial D_1
                                 - \partial \bigl(  D_1\lambda_Z(D_2) \bigr)/\partial D_1
\\
    &= \gamma_1f_1'(N)\lambda_Z(D_2)\cdot \partial N/ \partial D_1 
       - (D_2/\gamma_2) \partial Z / \partial D_1 
      - \lambda_Z(D_2)
\\  &=  - (D_2/\gamma_2) \partial Z / \partial D_1 
      - \lambda_Z(D_2),
\end{align*}
since $\partial N/\partial D_1 = 0$ by equation \eqref{PND1}.  
Differentiating with respect to $D_2$, we obtain
\begin{align*}
  0 &= \partial G_2/\partial D_2
\\
    & = \partial \bigl(\gamma_1f_1(N)\lambda_Z(D_2) \bigr)/ \partial D_2 
                                 - \partial \bigl( (D_2/\gamma_2) Z \bigr)/\partial D_2
                                 - \partial \bigl(  D_1\lambda_Z(D_2) \bigr)/\partial D_2
\\
  &=\gamma_1\bigl[f_1'(N)\lambda_Z(D_2)\cdot \partial N/ \partial D_2  {+}f_1(N)\lambda_Z'(D_2)\bigr]
   - \bigl[(D_2/\gamma_2) \partial Z / \partial D_2 {+} Z/\gamma_2\bigr]
   - D_1\lambda_Z'(D_2).
\end{align*}
Rewriting these equations, we obtain
\begin{align}
  \frac{\partial Z}{\partial D_1}(\mu, D_1, D_2)  &=  - \frac{\lambda_Z(D_2)\cdot \gamma_2}{D_2}  \label{PZD1}
\intertext{and}
  \frac{\partial Z}{\partial D_2}(\mu, D_1, D_2)  
    &= \frac{\gamma_1\gamma_2}{D_2}f_1'\bigl(N(\mu, D_1, D_2)\bigr)\cdot \frac{\partial N}{\partial D_2}(\mu, D_1, D_2)\cdot \lambda_Z(D_2)   \notag
\\   &+ \frac{\gamma_2}{D_2}\cdot\bigl( \gamma_1 f_1\bigl( N(\mu, D_1, D_2) \bigr) -D_1\bigr)\cdot \lambda_Z'(D_2)
       - \frac{Z(\mu, D_1, D_2)}{D_2}.  \label{PZD2}
\end{align}
To bound $\partial Z/\partial D_1$, we require a bound on $\lambda_Z(D_2)$. By definition $f_2\bigl(\lambda_Z(D_2)\bigr) = D_2/\gamma_2$, so
 restricting $D_2$ to be close to $D$ prevents $D_2/\gamma_2$ from approaching $\lim_{P \rightarrow \infty} f_2(P)$.
Consequently, $\lambda_Z(D_2)$ is a bounded distance from $\lambda_Z(D)$. 

To bound $\partial Z/ \partial D_2$, we discuss the terms on the righthand side of \eqref{PZD2} in reverse order.  
The term $D_2^{-1}Z(\mu, D_1, D_2)$ is bounded if $(D_1,D_2)$ is close to $(D,D)$, for it will be close to $D_2^{-1}Z(\mu, D,D)$. 
In turn $Z(\mu, D,D)$ is bounded by $\lim_{\mu \rightarrow \infty}Z(\mu, D,D)$, which exists by theorem \ref{boundedness of Z_2}.
Concerning the second term, $f_2'\bigl(\lambda_Z(D_2)\bigr)\cdot \lambda_Z'(D_2) = 1/\gamma_2$, so 
\begin{multline*}
   \frac{\gamma_2}{D_2}\cdot\bigl( \gamma_1 f_1\bigl( N(\mu, D_1, D_2) \bigr) -D_1\bigr)\cdot \lambda_Z'(D_2)
\\
=   \frac{\gamma_2}{D_2}\cdot\bigl( \gamma_1 f_1\bigl( N(\mu, D_1, D_2) \bigr) -D_1\bigr)\cdot \frac{1}{\gamma_2\cdot f_2'\bigl(\lambda_Z(D_2)\bigr)}
=    \frac{\bigl( \gamma_1 f_1\bigl( N(\mu, D_1, D_2) \bigr) -D_1\bigr)}{D_2\cdot f_2'\bigl(\lambda_Z(D_2)\bigr)},
\end{multline*}
where $f_1\bigl(N(\mu, D_1, D_2)\bigr)$ can be bounded in terms of $\lim_{N \rightarrow \infty}f_1(N)$, and
$f_2'\bigl(\lambda_Z(D_2)\bigr)$ is bounded away from zero by lemma \ref{f2primebound}.
Concerning the first term, substitute the expression for $\partial N/\partial D_2$ given in \eqref{PND2}, obtaining
\begin{multline*}
  \frac{\gamma_1\gamma_2}{D_2}f_1'\bigl(N(\mu, D_1, D_2)\bigr)\frac{\partial N}{\partial D_2}(\mu, D_1, D_2)\cdot \lambda_Z(D_2)
\\
  = -\frac{\gamma_1\gamma_2}{D_2}f_1'\bigl(N(\mu, D_1, D_2)\bigr)\cdot
     \frac{f_1\bigl(N(\mu, D_1, D_2)\bigr) \lambda_Z'(D_2)}{D + f_1'\bigl(N(\mu, D_1, D_2)\bigr)\cdot \lambda_Z(D_2)}\cdot \lambda_Z(D_2)
\\
  = -\frac{\gamma_2 \gamma_1}{D_2}\cdot
    \frac{ f_1\bigl(N(\mu, D_1, D_2)\bigr) \cdot \lambda_Z'(D_2)}{\Bigl(\frac{D}{\lambda_Z(D_2)\cdot f_1'\bigl(N(\mu, D_1, D_2)\bigr)} + 1\Bigr)}
\end{multline*}
Consequently, 
\begin{multline*}
  \Bigabs{\frac{\gamma_1\gamma_2}{D_2}f_1'\bigl(N(\mu, D_1, D_2)\bigr)\frac{\partial N}{\partial D_2}(\mu, D_1, D_2)\cdot \lambda_Z(D_2)}
\\ 
\leq \frac{\gamma_2 \gamma_1}{D_2} \cdot f_1\bigl(N(\mu, D_1, D_2)\bigr) \cdot \lambda_Z'(D_2)=\frac{\gamma_1 f_1\bigl(N(\mu, D_1, D_2)\bigr)}{D_2 f_2'\bigl(\lambda_Z(D_2)\bigr)},
\end{multline*}
where we use again the fact that $\lambda_Z'(D_2) = 1/\gamma_2 f_2'\bigl(\lambda_Z(D_2)\bigr)$. Arguing as above, we conclude this term 
 can be bounded by a constant, and, therefore, $\abs{\partial Z/\partial D_2(\mu, D_1, D_2)}$ itself is bounded by a constant
depending only on the domain $I{\times}\Delta$.

Combining these bounds $\norm{\nabla Z(\widehat{D}_1, \widehat{D}_2)}$ is bounded by a constant, so we conclude
that $\abs{Z(\mu, D_1, D_2) - Z(\mu, D, D)}$ is bounded by a constant times $\Dist{(D_1, D_2)}{(D, D)}$.
\end{proof}
\begin{proof}
  [Proof of bound 5]
By the mean value theorem
\begin{equation*}
  f_2'\bigl(\lambda_Z(D_2)\bigr) - f_2'\bigl(\lambda_Z(D)\bigr) = f_2^{(2)}(P_2)\cdot\bigl(\lambda_Z(D_2) - \lambda_Z(D)\bigr)
\end{equation*}
for some $P_2$ between $\lambda_Z(D_2)$ and $\lambda_Z(D)$.  Moreover, 
\begin{equation*}
  D_2/\gamma_2 - D/ \gamma_2 = f_2\bigl(\lambda_Z(D_2)\bigr) - f_2\bigl(\lambda_Z(D)\bigr) = f_2(P_1')\bigl(\lambda_Z(D_2) - \lambda_Z(D)\bigr),
\end{equation*}
for some $P_1'$ between $\lambda_Z(D_2)$ and $\lambda_Z(D)$.  We may combine to obtain
\begin{equation*}
   f_2'\bigl(\lambda_Z(D_2)\bigr) - f_2'\bigl(\lambda_Z(D)\bigr) = \frac{f_2^{(2)}(P_2)}{\gamma_2 f_2'(P_1)}\cdot(D_2 - D).
\end{equation*}
Since we have control of the continuous derivatives $f_2'$ and $f_2^{(2)}$ on the interval $J$ around $\lambda_Z(D)$, the difference
$\abs{ f_2'\bigl(\lambda_Z(D_2)\bigr) - f_2'\bigl(\lambda_Z(D)\bigr)}$ is indeed bounded by a constant times $\Dist{(D_1, D_2)}{(D, D)}$.
\end{proof}
\begin{proof}
  [Proof of bound 6]  This is precisely parallel to the proofs of bounds 2 and 3.
We may use the mean-value theorem for functions of variables $(D_1, D_2)$, obtaining
\begin{equation*}
  \bigabs{f_1^{(2)}\bigl(N(\mu, D_1, D_2)\bigr) - f_1^{(2)}\bigl(N(\mu, D, D)\bigr)} 
          \leq \bignorm{\nabla (f_1^{(2)} \com N)(\widehat{D}_1, \widehat{D}_2)}\cdot \Dist{(D_1, D_2)}{(D, D)},
\end{equation*}
where $(\widehat{D}_1, \widehat{D}_2)$ is a point on the line segment connecting $(D_1, D_2)$ and $(D,D)$. Therefore, we have to bound
the magnitude of the gradient $\norm{\nabla (f_1^{(2)} \com N)(\widehat{D}_1, \widehat{D}_2)}$ in a disc surrounding $(D,D)$.

To bound the partial derivatives
\begin{equation*}
  \partial(f_1^{(2)} \com N)/\partial D_1 = f_1^{(3)}(N)\cdot (\partial N / \partial D_1) 
\; \text{and} \;  
\partial(f_1^{(2)} \com N)/\partial D_2 = f_1^{(3)}(N)\cdot (\partial N / \partial D_2),
\end{equation*}
we have the vanishing of $\partial N/\partial D_1$ by \eqref{PND1} 
and a bound on $\abs{\partial N/\partial D_2}$ from \eqref{PND2bound}.
We also observe that $f_1^{(3)}\bigl(N(\mu, \widehat{D}_1, \widehat{D}_2)\bigr)$ is bounded by a constant, assuming a continuous third
derivative of $f_1$.

Combining all this information, $\norm{\nabla (f_1^{(2)} \com N)(\widehat{D}_1, \widehat{D}_2)}$ is bounded by a constant depending
on $I{\times}\Delta$. Therefore, $\abs{f_1^{(2)}\bigl(N(\mu, D_1, D_2)\bigr) - f_1^{(2)}\bigl(N(\mu, D, D)\bigr)} $ is bounded by a constant times
$\Dist{(D_1, D_2)}{(D, D)}$.
\end{proof}
\begin{proof}
  [Proof of bound 7]
For this proof, return to the defining relation for $N(\mu, D_1, D_2)$, namely,
\begin{equation*}
   0 =G_1(N, Z, \mu, D_1, D_2) = D(\mu - N ) - f_1(N)\lambda_Z(D_2),
\end{equation*}
and differentiate with respect to $\mu$, obtaining
\begin{equation*}
 0 = D + \bigl(-D - f_1'\bigl(N(\mu, D_1, D_2)\bigr) \cdot \lambda_Z(D_2) \bigr) \cdot N'(\mu, D_1, D_2),
\end{equation*}
so
\begin{equation*}
  N'(\mu, D_1, D_2) = \frac{D}{D + f_1'\bigl(N(\mu, D_1, D_2)\bigr)\cdot \lambda_Z(D_2)}.
\end{equation*}
Consequently, 
\begin{multline}
  \abs{N'(\mu, D_1, D_2) - N'(\mu, D, D) } 
\\
= \Bigabs{\frac{D}{D + f_1'\bigl(N(\mu, D_1, D_2)\bigr)\cdot \lambda_Z(D_2)}
                                   - \frac{D}{D + f_1'\bigl(N(\mu, D, D)\bigr)\cdot \lambda_Z(D)} } 
\\
    = \Bigabs{\frac{D f_1'\bigl(N(\mu, D, D)\bigr)\cdot \lambda_Z(D) - D f_1'\bigl(N(\mu, D_1, D_2)\bigr)\cdot \lambda_Z(D_2)}
       {\bigl(D + f_1'\bigl(N(\mu, D_1, D_2)\bigr)\cdot \lambda_Z(D_2)\bigr)\cdot\bigl( D + f_1'\bigl(N(\mu, D, D)\bigr)\cdot \lambda_Z(D)\bigr)} } 
\\  
\leq \Bigabs{\frac{f_1'\bigl(N(\mu, D, D)\bigr)\cdot \lambda_Z(D) - f_1'\bigl(N(\mu, D_1, D_2)\bigr)\cdot \lambda_Z(D_2)}{D}}.
\label{Nprimedifference}
\end{multline}
Applying lemma \ref{elementary} to \eqref{Nprimedifference} with bounds 1 and 3 as input, 
we find that $\abs{N'(\mu, D_1, D_2) - N'(\mu, D, D) }$
is bounded by a constant times $\Dist{(D_1, D_2)}{(D, D)}$.
\end{proof}
\begin{proof}
  [Proof of bound 8]
For this proof, return to the defining relation for $Z$, namely, 
\begin{equation*}
  0 = G_2(N, Z, \mu, D_1, D_2) = \gamma_1f_1(N)\lambda_Z(D_2) - D_1\lambda_Z(D_2) - (D_2/\gamma_2) Z,
\end{equation*}
and differentiate with respect to $\mu$, obtaining
\begin{equation*}
  0 = \gamma_1f_1'\bigl(N(\mu, D_1, D_2)\bigr)\cdot \lambda_Z(D_2)\cdot N'(\mu, D_1, D_2) - (D_2/\gamma_2)\cdot Z'(\mu, D_1, D_2).
\end{equation*}
Thus,
\begin{equation*}
  Z'(\mu, D_1, D_2) = \frac{\gamma_1\gamma_2}{D_2} \cdot f_1'\bigl(N(\mu, D_1, D_2)\bigr)\cdot \lambda_Z(D_2)\cdot N'(\mu, D_1, D_2),
\end{equation*}
and
\begin{equation}  \begin{split}
  Z'(\mu, D_1, D_2) &- Z'(\mu, D, D) 
\\
           &= \frac{\gamma_1\gamma_2}{D_2} \cdot f_1'\bigl(N(\mu, D_1, D_2)\bigr)\cdot \lambda_Z(D_2)\cdot N'(\mu, D_1, D_2)
\\           &\quad \quad  - \frac{\gamma_1\gamma_2}{D} \cdot f_1'\bigl(N(\mu, D, D)\bigr)\cdot \lambda_Z(D)\cdot N'(\mu, D, D)
\\
  &= \frac{\gamma_1\gamma_2}{D_2 D}\cdot\bigl[ D f_1'\bigl(N(\mu, D_1, D_2)\bigr)\cdot \lambda_Z(D_2)\cdot N'(\mu, D_1, D_2)
\\           &\hspace{10em}         - D_2 f_1'\bigl(N(\mu, D, D)\bigr)\cdot \lambda_Z(D)\cdot N'(\mu, D, D) \bigr]
\end{split}  \end{equation}
Obviously $\abs{D - D_2} \leq \Dist{(D_1, D_2)}{(D, D)}$, so we make several applications of lemma \ref{elementary} 
to combine this fact with bounds 1, 3, and 7 to deduce that
$\abs{ Z'(\mu, D_1, D_2) - Z'(\mu, D, D)  } $ is bounded by a constant times $\Dist{(D_1, D_2)}{(D,D)}$.
\end{proof}
\bibliographystyle{plain}
\bibliography{bjs_npp_bifurcations}
\end{document}